\numberwithin{equation}{section}
\newtheorem{thmintro}{Theorem}
\newtheorem{corintro}[thmintro]{Corollary}
\newtheorem{theorem}{Theorem}[section]
\newtheorem{lemma}[theorem]{Lemma}
\newtheorem{prop}[theorem]{Proposition}
\theoremstyle{definition}
\newtheorem{definition}[theorem]{Definition}
\newtheorem{remark}[theorem]{Remark}
\newtheorem{convention}[theorem]{Convention}
\newtheorem*{notation}{Notation}
\newcommand{\CC}{\mathbb{C}}
\newcommand{\NN}{\mathbb{N}}
\newcommand{\QQ}{\mathbb{Q}}
\newcommand{\RR}{\mathbb{R}}
\newcommand{\ZZ}{\mathbb{Z}}
\newcommand{\LLL}{\mathcal{L}}
\newcommand{\TTT}{\mathcal{T}}
\newcommand{\WW}{\mathcal{W}}
\newcommand{\g}{\mathfrak{g}}
\newcommand{\kk}{\mathfrak{k}}
\newcommand{\mm}{\mathbf{m}}
\newcommand{\HH}{\mathcal{H}}
\newcommand{\PP}{\mathcal{P}}
\newcommand{\uu}{\mathfrak{u}}
\newcommand{\ttt}{\mathfrak{t}}
\newcommand{\bc}{\mathbf{c}}
\newcommand{\bd}{\mathbf{d}}
\newcommand{\inv}{^{-1}}
\newcommand{\la}{\lambda}
\newcommand{\co}{\colon\thinspace}
\DeclareMathOperator{\sppan}{span}
\DeclareMathOperator{\fin}{\mathrm{fin}}
\DeclareMathOperator{\GL}{GL}
\DeclareMathOperator{\End}{End}
\DeclareMathOperator{\Aut}{Aut}
\DeclareMathOperator{\id}{id}
\DeclareMathOperator{\der}{der}
\DeclareMathOperator{\su}{sum}
\DeclarePairedDelimiter\ceil{\lceil}{\rceil}
\DeclarePairedDelimiter\floor{\lfloor}{\rfloor}
\DeclarePairedDelimiter\fr{\langle}{\rangle}
\DeclarePairedDelimiter\bfr{\big\langle}{\big\rangle}
\begin{document}

\renewcommand{\proofname}{{\bf Proof}}

\title[Positive energy representations of Hilbert loop algebras]{Positive energy representations of double extensions \\ of Hilbert loop algebras}
\author[Timoth\'ee Marquis]{Timoth\'ee \textsc{Marquis}$^*$}
\address{Department Mathematik, FAU Erlangen-Nuernberg, Cauerstrasse 11, 91058 Erlangen, Germany}
\email{marquis@math.fau.de}
\thanks{$^*$Supported by a Marie Curie Intra-European Fellowship}

\author[Karl-Hermann Neeb]{Karl-Hermann \textsc{Neeb}$^\dagger$}
\address{Department Mathematik, FAU Erlangen-Nuernberg, Cauerstrasse 11, 91058 Erlangen, Germany}
\email{neeb@math.fau.de}
\thanks{$^\dagger$Supported by DFG-grant NE 413/7-2, Schwerpunktprogramm ``Darstellungstheorie"}

\begin{abstract}
A real Lie algebra with a compatible Hilbert space structure (in the sense that the scalar product is invariant) is called a Hilbert--Lie algebra.  
Such Lie algebras are natural infinite-dimensional analogues of the compact Lie algebras; in particular, any infinite-dimensional simple Hilbert--Lie algebra $\mathfrak{k}$ is of one of the four classical types $A_J$, $B_J$, $C_J$ or $D_J$ for some infinite set~$J$. Imitating the construction of affine Kac--Moody algebras, one can then consider affinisations of $\mathfrak{k}$, that is, double extensions of (twisted) loop algebras over $\mathfrak{k}$. Such an affinisation $\mathfrak{g}$ of $\mathfrak{k}$ possesses a root space decomposition with respect to some Cartan subalgebra $\mathfrak{h}$, whose corresponding root system yields one of the seven locally affine root systems (LARS) of type $A_J^{(1)}$, $B^{(1)}_J$, $C^{(1)}_J$, $D_J^{(1)}$, $B_J^{(2)}$, $C_J^{(2)}$ or $BC_J^{(2)}$.

Let $D\in\mathrm{der}(\mathfrak{g})$ with $\mathfrak{h}\subseteq\mathrm{ker}D$ (a diagonal derivation of $\mathfrak{g}$). Then every highest weight representation $(\rho_{\lambda},L(\lambda))$ of $\mathfrak{g}$ with highest weight $\lambda$ can be extended to a representation $\widetilde{\rho}_{\lambda}$ of the semi-direct product $\mathfrak{g}\rtimes \RR D$. 
In this paper, we characterise all pairs $(\lambda,D)$ for which the representation $\widetilde{\rho}_{\lambda}$ is of positive energy, namely, for which the spectrum of the operator $-i\widetilde{\rho}_{\lambda}(D)$ is bounded from below.
\end{abstract}

\maketitle

\section{Introduction}
Let $G$ be a Lie group with Lie algebra $\g$, and let $\alpha\co\RR\to \Aut(G): t\mapsto\alpha_t$ define a continuous $\RR$-action on $G$. 
Consider a unitary representation $\pi\co G^{\sharp}\to U(\HH)$ of the topological group $G^{\sharp}:=G\rtimes_{\alpha}\RR$ on some Hilbert space $\HH$, and let 
$\mathrm{d}\pi\co\g\rtimes \RR D\to \uu(\HH)$ denote the corresponding derived representation, where $D:=\tfrac{d}{dt}|_{t=0}\mathrm{L}(\alpha_t)\in\der(\g)$ is the infinitesimal generator of $\alpha$. The representation $(\pi,\HH)$ is said to be of \emph{positive energy} if the spectrum of the \emph{Hamiltonian} $H:=-i\mathrm{d}\pi(D)$ is bounded from below.

It is a challenging natural problem to determine the irreducible positive energy representations $(\pi,\HH)$ of $G^{\sharp}$. 
As a consequence of the Borchers--Arveson Theorem (\cite[Theorem~3.2.46]{BR02}), 
for any such representation, 
the restriction $\rho:=\pi|_{G}$ is irreducible (see \cite[Theorem~2.5]{Ne14}), and the Hamiltonian $H$ of the extension of $\rho$ to $G^{\sharp}$ is determined by $\alpha$, up to an additive constant.
The set of irreducible positive energy representations of $G^{\sharp}$ may thus be viewed as a subset $\widehat{G}_{\alpha}$ of the set $\widehat{G}$ of equivalence classes of irreducible unitary representations of $G$, and one would like to describe this subset as explicitely as possible.

A prominent class of unitary representations which will be studied in this paper is provided by the subset $\widehat{G}_{hw}$ of irreducible unitary ``highest weight representations'' of $G$. 
In \cite{PEClocfin}, an explicit description of the positive energy representations in $\widehat{G}_{hw}$ was obtained for some prototypical example of a \emph{Hilbert--Lie group} $G$ (that is, such that the Lie algebra of $G$ is a Hilbert--Lie algebra), when $\alpha$ is given by conjugation with diagonal operators.
In this paper, we push this study further by considering double extensions of \emph{Hilbert loop groups}, that is, double extensions of loop groups over a Hilbert--Lie group.
Since the positive energy condition is expressed in terms of the derived representation $\mathrm{d}\pi\co \g\rtimes \RR D\to \uu(\HH)$, we will formulate our results at the level of the corresponding Lie algebras, namely, for double extensions of \emph{Hilbert loop algebras}. For the construction of highest weight representations of double extensions of Hilbert loop groups, we refer to \cite{Hloopgroups}.

\smallskip

We now present in more detail the main result of this paper. For a more thorough account of the concepts presented below, we refer to \cite[Sections~2 and 3]{PECisom} and the references therein.

A \emph{Hilbert--Lie algebra} is a real Lie algebra $\kk$ admitting a real Hilbert space structure with invariant scalar product, that is, such that $\langle [x,y],z\rangle=\langle x,[y,z]\rangle$ for all $x,y,z\in \kk$. Hilbert--Lie algebras generalise the classical matrix algebras to infinite rank. In particular, any simple infinite-dimensional Hilbert--Lie algebra $\kk$ possesses a root space decomposition with respect to some maximal abelian subalgebra $\ttt$ (a \emph{Cartan subalgebra}), whose corresponding root system $\Delta=\Delta(\kk,\ttt)\subseteq i\ttt^*$ is a so-called \emph{locally finite root system}, of one of the types $A_J$, $B_J$, $C_J$ or $D_J$ for some infinite set $J$ (see \cite{NeSt01} and \cite{LN04}). 

Let $\varphi\in\Aut(\kk)$ be an automorphism of the simple Hilbert--Lie algebra $\kk$ of finite order $N$, and let $\ttt_0$ be a maximal abelian subalgebra of $\kk^{\varphi}:=\{x\in\kk \ | \ \varphi(x)=x\}$. Then $\kk$ also possesses a root space decomposition $\kk_{\CC}=(\ttt_0)_{\CC}\oplus\bigoplus_{\alpha\in\Delta_{\varphi}}{\kk_{\CC}^{\alpha}}$ with respect to $\ttt_0$, with corresponding root system $\Delta_{\varphi}=\Delta(\kk,\ttt_0)$.

The (\emph{$\varphi$-twisted}) \emph{Hilbert loop algebra} over $\kk$ is the Lie algebra
$$\LLL_{\varphi}(\kk):=\{\xi\in C^{\infty}(\RR,\kk) \ | \ \xi(t+\tfrac{2\pi}{N})=\varphi\inv(\xi(t)) \ \forall t\in\RR\}.$$ 
We equip its complexification $\LLL_{\varphi}(\kk)_{\CC}$ with the invariant positive definite hermitian form $\langle\cdot,\cdot\rangle$ defined by
$$\langle \xi,\eta\rangle=\frac{1}{2\pi}\int_{0}^{2\pi}{\langle \xi(t),\eta(t)\rangle \mathrm{dt}}.$$

Let $\der_0(\LLL_{\varphi}(\kk),\langle\cdot,\cdot\rangle)$ denote the space of skew-symmetric derivations $D$ of $\LLL_{\varphi}(\kk)$ that are \emph{diagonal}, in the sense that $D(e^{int}\otimes \kk_{\CC}^{\alpha})\subseteq e^{int}\otimes \kk_{\CC}^{\alpha}$ for all $n\in\ZZ$ and $\alpha\in\Delta_{\varphi}$. Define $D_0\in \der_0(\LLL_{\varphi}(\kk),\langle\cdot,\cdot\rangle)$ by $D_0(\xi)=\xi'$. For any weight $\nu\in i\ttt_0^*$, let also $\overline{D}_{\nu}$ be the derivation of $\kk_{\CC}$ defined by 
$$\overline{D}_{\nu}(x_{\alpha}):=i\nu(\alpha^{\sharp})x_{\alpha}\quad\textrm{for all $x_{\alpha}\in\kk_{\CC}^{\alpha}$, $\alpha\in\Delta_{\varphi}$,}$$
where $\alpha^{\sharp}$ is the unique element of $i\ttt_0$ such that $\langle h,\alpha^{\sharp}\rangle=\alpha(h)$ for all $h\in \ttt_0$. Then $\overline{D}_{\nu}$ restricts to a skew-symmetric derivation of $\kk$, which we extend to a derivation in $\der_0(\LLL_{\varphi}(\kk),\langle\cdot,\cdot\rangle)$ by setting $\overline{D}_{\nu}(\xi)(t):=\overline{D}_{\nu}(\xi(t))$ for all $\xi\in \LLL_{\varphi}(\kk)$ and $t\in\RR$. The space $\der_0(\LLL_{\varphi}(\kk),\langle\cdot,\cdot\rangle)$ is then spanned by $D_0$ and all such $\overline{D}_{\nu}$ (see \cite[Theorem~7.2 and Lemma~8.6]{MY15}), and we set
$$D_{\nu}:=D_0+\overline{D}_{\nu}\in \der_0(\LLL_{\varphi}(\kk),\langle\cdot,\cdot\rangle).$$
The derivation $D_{\nu}$ defines a $2$-cocycle $\omega_{D_{\nu}}(x,y):=\langle D_{\nu}(x),y\rangle$ on $\LLL_{\varphi}(\kk)$, and extends to a derivation $\widetilde{D}_{\nu}(z,x):=(0,D_{\nu}(x))$ of the corresponding central extension $\RR\oplus_{\omega_{D_{\nu}}}\LLL_{\varphi}(\kk)$. We call the resulting double extension
$$\g=\widehat{\LLL}_{\varphi}^{\nu}(\kk):=(\RR\oplus_{\omega_{D_{\nu}}}\LLL_{\varphi}(\kk))\rtimes_{\widetilde{D}_{\nu}}\RR$$
the \emph{$\nu$-slanted and $\varphi$-twisted affinisation} of the Hilbert--Lie algebra $\kk$. 
The Lie algebra $\g$ admits a root space decomposition $\g_{\CC}=(\ttt_0^e)_{\CC}\oplus\bigoplus_{\alpha\in \widehat{\Delta}_{\varphi}}\g_{\alpha}$ with respect to its maximal abelian subalgebra $\ttt_0^e:=\RR\oplus\ttt_0\oplus\RR$, with corresponding root system $$\widehat{\Delta}_{\varphi}=\Delta(\g,\ttt_0^e)\subseteq \{0\}\times i\ttt_0^* \times \ZZ\subseteq i(\ttt_0^e)^*.$$ The set $$(\widehat{\Delta}_{\varphi})_c:=\{(0,\alpha,n)\in \widehat{\Delta}_{\varphi} \ | \ \alpha\neq 0\}\subseteq \{0\}\times \Delta_{\varphi} \times \ZZ$$ of \emph{compact roots} is then a so-called \emph{locally affine root system} (LARS). These LARS were classified in \cite{YY08}, and those of infinite rank fall into $7$ distinct families of isomorphism classes, parametrised by the types $X_J^{(1)}$ and $Y_J^{(2)}$ for $X\in\{A,B,C,D\}$ and $Y\in\{B,C,BC\}$, for some infinite set $J$. The type $X_J^{(1)}$ can be realised as the root system of the unslanted and untwisted affinisation $\widehat{\LLL}^{0}_{\id}(\kk)$ of some Hilbert--Lie algebra $\kk$ with root system of type $X_J$. The type $Y_J^{(2)}$ can similarly be realised as the root system of some unslanted and $\psi_Y$-twisted affinisation of a suitable Hilbert--Lie algebra $\kk$, for some automorphism $\psi_Y$ of order $2$ whose description can be found in \cite[\S 2.2]{Hloopgroups} (see also \cite[Section~6]{PECisom}). We call the three automorphisms $\psi_Y$, as well as the $7$ affinisations of a Hilbert--Lie algebra described above \emph{standard}.

Set $\bc:=(i,0,0)\in i\ttt_0^e\subseteq \g_{\CC}$ and $\bd:=(0,0,-i)\in i\ttt_0^e\subseteq\g_{\CC}$.
Let $\la\in i(\ttt_0^e)^*$, which we write as $\la=(\la_c,\la^{0},\la_d)$ where
$$\la_c:=\la(\bc)\in\RR, \quad \la^{0}:=\la|_{i\ttt_0}\in i\ttt_0^* \quad\textrm{and}\quad \la_d:=\la(\bd)\in\RR.$$ 
Assume that $\la_c\neq 0$ and that $\la$ is \emph{integral}, in the sense that $\la$ only takes integer values on the coroots $\check{\alpha}$, $\alpha\in \widehat{\Delta}_{\varphi}$ (see \S\ref{section:preliminaries:affine} below). Then $\g$ admits an irreducible integrable highest weight representation
$$\rho_{\la}\co \g\to\End(L(\la))$$
with highest weight $\la$, whose set of weights is given by $\PP_{\la}=\mathrm{Conv}(\widehat{\WW}_{\varphi}^{\nu}.\la)\cap (\la+ \ZZ[\widehat{\Delta}_{\varphi}]),$ where $\widehat{\WW}_{\varphi}^{\nu}=\WW(\g,\ttt_0^e)$ denotes the Weyl group of $\g$ with respect to $\ttt_0^e$ (see \cite[Theorem~4.10]{Ne09}). In fact, \cite[Theorem~4.11]{Ne09} even implies that $\rho_{\la}$ is unitary with respect to some (uniquely defined up to a positive factor) inner product on $L(\la)$.

Let $\nu'\in i\ttt_0^*$, and extend the derivation $D_{\nu'}$ of $\LLL_{\varphi}(\kk)$ to a skew-symmetric derivation of $\g$ by $D_{\nu'}(\ttt_0^e):=\{0\}$. Then $D_{\nu'}$ is encoded by the character $$\chi=\chi_{\nu'}\co\ZZ[\widehat{\Delta}_{\varphi}]\to\RR:(0,\alpha,n)\mapsto n+\nu'(\alpha^{\sharp})$$
satisfying $D_{\nu'}(x_{\alpha})=i\chi(\alpha)x_{\alpha}$ for all $x_{\alpha}\in\g_{\alpha}$, $\alpha\in\widehat{\Delta}_{\varphi}$.
One can now extend $\rho_{\la}$ to a representation  
$$\widetilde{\rho}_{\la,\chi}\co \g\rtimes \RR D_{\nu'}\to\End(L(\la))$$
of the semi-direct product $\g\rtimes\RR D_{\nu'}$, where $\widetilde{\rho}_{\la,\chi}(D_{\nu'})v_{\gamma}=i\chi(\gamma-\la)v_{\gamma}$ for all $\gamma\in \PP_{\la}$ and $v_{\gamma}\in L(\la)$ of weight $\gamma$. The representation $\widetilde{\rho}_{\la,\chi}$ is thus of \emph{positive energy} if and only if
$$M_{\g,\nu'}:=\inf \mathrm{Spec}(H_{\nu'})=\inf\chi\big(\PP_{\la}-\la\big)=\inf \chi\big(\widehat{\WW}_{\varphi}^{\nu}.\la-\la\big)>-\infty,$$
where $H_{\nu'}:=-i\widetilde{\rho}_{\la,\chi}(D_{\nu'})$ is the corresponding Hamiltonian.

We first characterise the positive energy highest weight representations of $\g$ when $\g$ is standard. In this case, there is an orthonormal basis $\{e_j \ | \ j\in J\}$ of $i\ttt_0$ such that the linearly independent system $\{\epsilon_j \ | \ j\in J\}\subseteq i\ttt_0^*$ defined by $\langle \epsilon_j,e_k\rangle=\delta_{jk}$ contains the root system $\Delta_{\varphi}$ in its $\ZZ$-span (see \S\ref{section:preliminaries:affine} below). Write a character $\chi\co i(\ttt_0^e)^*\to\RR$ as $\chi=(\chi_c,\chi^0,\chi_d)$ where
$$\chi_c:=\chi((1,0,0))\in\RR, \quad \chi^0:=\chi|_{i\ttt_0^*} \quad\textrm{and}\quad \chi_d:=\chi((0,0,1))\in\RR.$$
We call $\chi=(\chi_c,\chi^0,\chi_d)$ \emph{summable} if $\chi_c=\chi_d=0$ and $\chi^0\in\ell^1(J)$, that is, $\sum_{j\in J}{|\chi^0(\epsilon_j)|}<\infty$.

\begin{thmintro}\label{thmintro:A}
Let $(\g,\ttt^e_0)$ be a standard affinisation of a simple Hilbert--Lie algebra, with Weyl group $\widehat{\WW}=\WW(\g,\ttt^e_0)$.
Let $\la=(\la_c,\la^{0},\la_d)\in i(\ttt^e_0)^*$ be an integral weight with $\la_c\neq 0$. Then for any character $\chi=(\chi_c,\chi^0,\chi_d)\co i(\ttt_0^e)^*\to\RR$ with $\la_c\chi_d> 0$, the following are equivalent:
\begin{enumerate}
\item
$\inf \big(\chi\big(\widehat{\WW}.\la-\la\big)\big)>-\infty$.
\item
$\chi=\chi_{\min}+\chi_{\su}$ for some minimal energy character $\chi_{\min}$, satisfying $\inf \big(\chi_{\min}\big(\widehat{\WW}.\la-\la\big)\big)=0$, and some summable character $\chi_{\su}$.
\end{enumerate}
\end{thmintro}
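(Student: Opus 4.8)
The overall strategy is to analyse the function $w \mapsto \chi(w.\lambda - \lambda)$ on the Weyl group $\widehat{\WW}$ by decomposing it according to the structure of $\widehat{\WW}$ as (essentially) a semidirect product of a finite-type part with a group of translations coming from the coroot lattice. Recall that for an affinisation, the Weyl group $\widehat{\WW}$ acts on $i(\ttt_0^e)^*$ fixing $\bc$ and acting on the quotient by the classical Weyl group $\WW(\kk,\ttt_0)$ together with translations $t_\mu$ by (images of) coroots $\mu$, with the standard formula $t_\mu.\lambda = \lambda + \lambda_c\,\mu^\flat - (\langle\lambda^0,\mu\rangle + \tfrac{\lambda_c}{2}\langle\mu,\mu\rangle)\bd$ (up to normalisations fixed earlier in the paper). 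Since $\lambda_c \neq 0$ and $\lambda_c\chi_d > 0$, the $\bd$-component of $t_\mu.\lambda - \lambda$ is a negative-definite quadratic in $\mu$ after multiplying by $\chi_d$, while the $i\ttt_0^*$-component $\lambda_c\mu^\flat$ is linear; this is the mechanism that can force $\chi(w.\lambda-\lambda)$ to be unbounded below, and controlling it is the crux.

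\textbf{Step 1: Reduce to the translation subgroup.} First I would show that $\inf\chi(\widehat{\WW}.\lambda-\lambda) > -\infty$ if and only if the analogous infimum over the translation part $\{t_\mu\}$ is finite. The classical Weyl group part moves $\lambda^0$ within a single $\WW(\kk,\ttt_0)$-orbit, which is bounded in the relevant sense (weights of an integrable representation in finitely many "directions"), and the key point is that conjugating a translation by a classical Weyl element gives another translation, so the two contributions interact in a controlled way. This reduces the problem to understanding $\inf_\mu \chi(t_\mu.\lambda - \lambda)$.

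\textbf{Step 2: Compute and split the translation contribution.} Plugging in the translation formula, $\chi(t_\mu.\lambda - \lambda) = \lambda_c\,\chi^0(\mu^\flat) - \chi_d\big(\langle\lambda^0,\mu\rangle + \tfrac{\lambda_c}{2}\langle\mu,\mu\rangle\big) + (\text{possible }\chi_c\text{ term, which vanishes since }t_\mu\text{ fixes }\bc)$. Using the explicit orthonormal basis $\{e_j\}$ and writing everything in coordinates, $\mu$ ranges over the coroot lattice of the (locally finite) root system $\Delta_\varphi$, which in each of the classical types is spanned by differences $e_j - e_k$ (plus, in types $B,C,BC$, some $e_j$ or $2e_j$). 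The expression becomes, in each coordinate $j$, a quadratic $-\tfrac{\lambda_c\chi_d}{2}m_j^2 + (\lambda_c\chi^0(\epsilon_j) - \chi_d\lambda^0(\epsilon_j))m_j$ in the integer $m_j = \mu(\epsilon_j)$ (up to the constraint coming from the lattice, e.g.\ $\sum m_j = 0$ in type $A$), and since $\lambda_c\chi_d>0$ each such quadratic is bounded above but $-\infty$ below unless the linear coefficient is controlled.

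\textbf{Step 3: Extract the minimal energy character and the summable remainder.} The infimum over $\mu$ is finite precisely when the "linear parts" $\lambda_c\chi^0(\epsilon_j) - \chi_d\lambda^0(\epsilon_j)$, suitably interpreted modulo the finitely many lattice relations and the finite jumps allowed by integrality of $\lambda$ and $\chi$ being a character, are summable — i.e.\ lie in $\ell^1(J)$ up to a bounded correction. This is where the notion of \emph{minimal energy character} (defined earlier, with $\inf(\chi_{\min}(\widehat{\WW}.\lambda-\lambda)) = 0$) comes in: one chooses $\chi_{\min}$ to absorb the "essential", non-$\ell^1$ part of $\chi$ (determined by $\lambda_d$, $\lambda^0$ and the combinatorial type), and then $\chi_{\su} := \chi - \chi_{\min}$ has $\chi_{\su,c} = \chi_{\su,d} = 0$ (both $\chi_c,\chi_d$ being matched, as the $\bc$-component plays no role and the $\bd$-component is pinned by the sign condition) and $\chi_{\su}^0 \in \ell^1(J)$. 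Conversely, if $\chi = \chi_{\min} + \chi_{\su}$ with $\chi_{\su}$ summable, then $\chi_{\su}(\widehat{\WW}.\lambda-\lambda)$ is bounded (a summable character evaluated on weights that differ from $\lambda$ by elements of $\ZZ[\widehat{\Delta}_\varphi]$ with uniformly controlled support is bounded in absolute value), so $\inf\chi(\widehat{\WW}.\lambda-\lambda) \ge 0 + \inf\chi_{\su}(\cdots) > -\infty$, giving (2)$\Rightarrow$(1).

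\textbf{Main obstacle.} The hard part is Step 3 in the direction (1)$\Rightarrow$(2): making precise the decomposition into minimal-energy plus summable, i.e.\ proving that finiteness of the infimum actually forces an $\ell^1$-bound on the relevant linear data, and that the "threshold" behaviour is captured exactly by a minimal energy character. This requires a careful case analysis over the classical types (the coroot lattices and hence the lattice constraints differ between $A_J$, $B_J$/$C_J$/$D_J$, and affects which coordinates are free), together with control of the interaction between the translation part and the classical Weyl group part — in particular showing that the classical part cannot "rescue" an otherwise unbounded-below translation direction. One should also verify that the quantities $\langle\lambda^0,\mu\rangle$ appearing are well-defined despite $\lambda^0$ possibly having infinite support, which follows from integrality of $\lambda$ (it takes integer values on coroots, so $\langle\lambda^0,\mu\rangle \in \ZZ$ is a finite sum). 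I expect the cleanest route is to first do everything for the translation subgroup in each type, isolate the exact condition on $(\lambda,\chi)$, and only then feed in the classical Weyl group via Step 1.
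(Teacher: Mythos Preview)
Your proposal has the right rough shape (quadratic in the translation part, linear remainder, $\ell^1$ control), but there are two genuine gaps that would prevent it from going through as written.

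\textbf{Step 1 is not a valid reduction.} You claim the infimum over $\widehat{\WW}$ is finite iff the infimum over the translation subgroup is finite, because ``the classical Weyl group part moves $\la^0$ within a single $\WW$-orbit, which is bounded.'' But $J$ is infinite, so $\WW$-orbits are infinite and the classical-Weyl-group contribution is \emph{not} bounded in any a priori sense; indeed, the PEC for the classical Weyl group alone is already a nontrivial theorem (this is the content of \cite{PEClocfin}, which the paper uses as input). More importantly, the paper's necessary conditions (Propositions~\ref{prop:necessary_A_aff} and \ref{prop:necessary_B_aff}) are obtained by testing against elements $\tau_x\sigma w^{-1}$ in which the translation $x$, the sign change $\sigma$, and the permutation $w$ are chosen \emph{simultaneously} and adapted to one another. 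For instance, to show that the difference of the maximal and minimal accumulation points of $D(J)$ is at most $1$ (Proposition~\ref{prop:necessary_A_aff}(2)), one must pair a permutation $w_k$ swapping two infinite families with a matched translation $x_k$; neither piece alone gives the bound. So the classical part does not merely ``fail to rescue'' a bad translation direction---it actively participates in producing the sharp necessary conditions.

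\textbf{You are missing the translation-invariance reduction.} The paper's key maneuver (Lemma~\ref{lemma:translation_invariance1}) is to shift $\la^0$ by an integer tuple so that every $\la_j\in[-\tfrac12,\tfrac12)$; this uses $\ZZ$-discreteness (guaranteed by integrality, Lemma~\ref{lemma:integral-Zdiscrete}) and the exact form of the lattice $\TTT$. Only after this normalisation does the ``complete-the-square'' formula~(\ref{eqn:second_eqn}) become tractable, because then $|\la_j|\le\tfrac12$ interacts cleanly with the integer $n_j$. Your Step~2 never performs this reduction, so the quadratic analysis you sketch would have uncontrolled linear coefficients $\la_j$ ranging over an infinite set. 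Finally, your justification for $(2)\Rightarrow(1)$---``weights differ from $\la$ by elements with uniformly controlled support''---is false: Weyl group elements have arbitrarily large support. The correct argument (Lemma~\ref{lemma:invariance_summable_aff2}) reduces to the $\chi_d=0$ case and then invokes the summability lemma from \cite{PEClocfin}.
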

In addition, we give an explicit description of the set of characters $\chi$ of minimal energy (see Section~\ref{Section:COTPFLZD}). An alternative description of this set is given in \cite[Theorem~3.5]{convexhull}. 
Note that the assumption $\la_c\chi_d> 0$ in Theorem~\ref{thmintro:A} is only necessary to avoid degenerate cases, which are dealt with in \S\ref{section:trivial_cases}.
The proof of Theorem~\ref{thmintro:A} relies on the earlier work \cite{PEClocfin}, which provides a similar characterisation of the positive energy condition for highest weight representations of Hilbert--Lie algebras.

To characterise the positive energy highest weight representations of $\g=\widehat{\LLL}_{\varphi}^{\nu}(\kk)$ arbitrary, we use the main results of \cite{PECisom}, which allows to reduce the problem to the ``standard'' case.
\begin{corintro}\label{corintro:arbitrary}
The statement of Theorem~\ref{thmintro:A} holds for arbitrary affinisations $\g=\widehat{\LLL}_{\varphi}^{\nu}(\kk)$ of a simple Hilbert--Lie algebra $\kk$.
\end{corintro}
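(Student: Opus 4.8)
The plan is to reduce Corollary~\ref{corintro:arbitrary} to Theorem~\ref{thmintro:A} by invoking the isomorphism theory of affinisations developed in \cite{PECisom}. The point is that an arbitrary $\nu$-slanted and $\varphi$-twisted affinisation $\g=\widehat{\LLL}_{\varphi}^{\nu}(\kk)$ is, up to isomorphism, one of the seven standard affinisations; more precisely, I expect \cite{PECisom} to provide an isomorphism of Lie algebras $\Phi\co \g\xrightarrow{\ \sim\ }\g_{\mathrm{st}}$ carrying the Cartan subalgebra $\ttt_0^e$ onto the corresponding Cartan subalgebra of a standard affinisation $\g_{\mathrm{st}}$, and hence inducing an isomorphism of the pairs $(\g,\ttt_0^e)$ and $(\g_{\mathrm{st}},(\ttt_0^e)_{\mathrm{st}})$ of root data. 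Such a $\Phi$ transports the root system $\widehat{\Delta}_{\varphi}$ to the standard one, intertwines the two Weyl groups, and (after transposing) carries integral weights to integral weights, highest weight representations to highest weight representations, and diagonal derivations $D_{\nu'}$ to diagonal derivations on the standard side. The central and degree generators $\bc,\bd$ are canonically defined, so $\la_c$, $\chi_c$, $\chi_d$ — and in particular the sign condition $\la_c\chi_d>0$ — are preserved under $\Phi^*$.

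The key steps, in order, are as follows. First, fix the isomorphism $\Phi$ from \cite{PECisom} and record the induced bijections on Cartan subalgebras, root systems, coroots, Weyl groups, and (via the transpose $\varphi^*:=\Phi^*$) on the space of weights and the space of characters $i(\ttt_0^e)^*\to\RR$. Second, check the compatibilities: that $\varphi^*$ sends integral weights to integral weights (immediate, since $\Phi$ matches coroots with coroots) and that it intertwines the weight set $\PP_\la$ with $\PP_{\varphi^*\la}$ and the Weyl orbit $\widehat{\WW}_{\varphi}^{\nu}.\la$ with $\widehat{\WW}_{\mathrm{st}}.(\varphi^*\la)$. Third, observe that the quantity controlling positive energy, namely $\inf\chi(\widehat{\WW}_{\varphi}^{\nu}.\la-\la)$, is invariant: if $\chi$ corresponds to $\chi_{\mathrm{st}}:=\chi\circ(\varphi^*)^{-1}$ on the standard side, then $\chi(\widehat{\WW}_{\varphi}^{\nu}.\la-\la)=\chi_{\mathrm{st}}(\widehat{\WW}_{\mathrm{st}}.(\varphi^*\la)-\varphi^*\la)$, so condition (1) holds for $(\g,\la,\chi)$ iff it holds for $(\g_{\mathrm{st}},\varphi^*\la,\chi_{\mathrm{st}})$. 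Fourth, apply Theorem~\ref{thmintro:A} to the standard data, and then transport condition (2) back: minimal energy characters correspond to minimal energy characters under $\chi_{\mathrm{st}}\mapsto\chi_{\mathrm{st}}\circ\varphi^*$ (same infimum, same value $0$), and summable characters correspond to summable characters — this last point being the only place where one must be slightly careful.

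The main obstacle, and hence the step deserving the most attention, is precisely the compatibility of the notion of \emph{summability} with the isomorphism $\Phi$. Summability was defined in terms of the distinguished orthonormal basis $\{e_j \ | \ j\in J\}$ of $i\ttt_0$ (equivalently the coordinates $\epsilon_j$) attached to a \emph{standard} affinisation, and a priori the pullback of this basis under $\Phi$ need not be the analogous basis for $\g$. One therefore needs that $\Phi$ restricted to the Cartan subalgebras is, up to a permutation of $J$ and signs, an isometry identifying the two orthonormal systems — so that $\ell^1(J)$-membership of $\chi^0$ is preserved — together with the facts that $\chi_c$ and $\chi_d$ are preserved (which is automatic from $\Phi(\bc)=\bc$, $\Phi(\bd)=\bd$, up to the canonical normalisations in \cite{PECisom}). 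Given that \cite{PECisom} is built around exactly such metric isomorphisms of the underlying Hilbert--Lie data, I expect this to go through, and the proof of Corollary~\ref{corintro:arbitrary} then consists of assembling these observations: transport $(\la,\chi)$ to the standard side, apply Theorem~\ref{thmintro:A}, and transport the conclusion back.
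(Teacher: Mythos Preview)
Your overall strategy---transport to a standard affinisation via the isomorphism theory of \cite{PECisom} and then invoke Theorem~\ref{thmintro:A}---is the right one, and is indeed what the paper does. However, there is a genuine gap in your plan: the isomorphism supplied by \cite[Theorem~A]{PECisom} does \emph{not} carry $\g=\widehat{\LLL}_{\varphi}^{\nu}(\kk)$ to an unslanted standard affinisation $\widehat{\LLL}_{\psi}^{0}(\kk)$, but only to a \emph{slanted} one $\widehat{\LLL}_{\psi}^{\mu+\nu}(\kk)$ (for some weight $\mu$ determined by $\varphi$). Since Theorem~\ref{thmintro:A} is stated only for the seven \emph{unslanted} standard affinisations, you cannot apply it directly after transport. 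The paper closes this gap with a second, separate step: \cite[Proposition~7.4]{PECisom} relates the Weyl group action $\widehat{\WW}_{\psi}^{\mm}$ of the slanted affinisation to that of the unslanted one $\widehat{\WW}_{\psi}^{0}$ via the shift $(\la,\chi)\mapsto(\la_{\mm},\chi_{\mm})$ with $\mm=Q\nu+N_{\psi}\mu$. Only after this ``un-slanting'' does one land in the setting of Theorem~\ref{thmintro:A}.

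This shift has a further consequence you do not anticipate: the weight on the standard side is not simply $\Phi^*\la$ but involves the correction $\la^0\mapsto \la^0-\la_c(\nu+N_{\varphi}\mu)$, and one must verify that this shifted weight is still $\ZZ$-discrete. The paper does this explicitly, using that $\mu$ takes values in a finite set of rationals (a structural fact from \cite[Section~6]{PECisom}). Your assertion that ``$\Phi$ matches coroots with coroots, hence preserves integrality'' is not enough here, because the relevant weight is no longer the transported one but its $\mm$-shift. Conversely, the issue you flag as ``the main obstacle''---compatibility of summability with $\Phi$---is in fact a non-issue: the isomorphism $\Phi$ of \cite{PECisom} restricts to the \emph{identity} on $i\ttt_0$ (and scales $\bc,\bd$ by $Q^{\pm 1}$, not fixes them), so $\ell^1(J)$ is trivially preserved.
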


A more precise statement of Corollary~\ref{corintro:arbitrary} is given in Theorem~\ref{thm:CorBprecise} below: the key point here is that the explicit form of the isomorphism from $\g$ to one of the (slanted) standard affinisations of $\kk$ provided by \cite[Theorem~A]{PECisom} also allows for an explicit description of the minimal energy sets in this more general setting.

\section{Preliminaries}

\begin{notation}
In this paper, we denote by $\NN=\{1,2,\dots\}$ the set of positive natural numbers.
\end{notation}

\subsection{Locally affine root systems}\label{section:preliminaries:affine}
The general reference for this paragraph is  \cite[\S 2.3 and \S 3.5]{PECisom} (and the references therein).

Let $J$ be an infinite set. Let $V_{\fin}:=\RR^{(J)}\subseteq V:=\RR^J$ be the free vector space over $J$, with canonical basis $\{e_j \ | \ j\in J\}$ and standard scalar product given by $\langle e_j,e_k\rangle=\delta_{jk}$. Note that we may extend $\langle\cdot,\cdot\rangle$ to a bilinear form on $V_{\fin}\times V$.  In the dual space $(V_{\fin})^*\cong \RR^{J}$ of $V_{\fin}$, we consider the linearly independent system $\{\epsilon_j \ | \ j\in J\}$ defined by $\epsilon_j(e_k)=\delta_{jk}$, and we denote by $(\cdot,\cdot)$ the standard scalar product on $$V^{\star}_{\fin}:=\sppan_{\RR}\{\epsilon_j \ | \ j\in J\}\subseteq (V_{\fin})^*$$ for which $(\epsilon_j,\epsilon_k)=\delta_{jk}$ for all $j,k\in J$. 

Any infinite irreducible locally finite root system $\Delta$ can be realised inside $(V_{\fin}^{\star},(\cdot,\cdot))$ for some suitable set $J$, and is of one of the following types:
\begin{equation*}
\begin{aligned}
A_J&:=\{\epsilon_j-\epsilon_k \ | \ j,k\in J, \ j\neq k\},\\
B_J&:=\{\pm \epsilon_j, \pm(\epsilon_j\pm\epsilon_k) \ | \ j,k\in J, \ j\neq k\},\\
C_J&:=\{\pm 2\epsilon_j, \pm(\epsilon_j\pm\epsilon_k) \ | \ j,k\in J, \ j\neq k\},\\
D_J&:=\{\pm(\epsilon_j\pm\epsilon_k) \ | \ j,k\in J, \ j\neq k\},\\
BC_J&:=\{\pm \epsilon_j, \pm 2\epsilon_j, \pm(\epsilon_j\pm\epsilon_k) \ | \ j,k\in J, \ j\neq k\}.
\end{aligned}
\end{equation*}
Set
$$\widehat{V}:=\RR\times V\times\RR, \quad \widehat{V}_{\fin}:=\RR\times V_{\fin}\times\RR, \quad \widehat{V}^{\star}:=\RR\times (V_{\fin})^*\times\RR\quad\textrm{and}\quad \widehat{V}_{\fin}^{\star}:=\RR\times V_{\fin}^{\star}\times\RR,$$
where we identify $\widehat{V}^{\star}$ with the dual of $\widehat{V}_{\fin}$ by setting 
$$\la(z,h,t):=\la_cz+\la^{0}(h)+\la_dt\quad\textrm{for all $\la=(\la_c,\la^{0},\la_d)\in \widehat{V}^{\star}$ and $(z,h,t)\in \widehat{V}_{\fin}$.}$$ In other words, the superscript $\star$ (resp. its absence) indicates that we are considering triples $(z,h,t)$ with $h$ of the form $h=\sum_{j\in J}{h_j\epsilon_j}$ (resp.~$h=\sum_{j\in J}{h_je_j}$) for some $h_j\in\RR$, and the subscript \emph{fin} indicates that we in addition assume that only finitely many $h_j$ are nonzero.

Any infinite irreducible reduced locally affine root system can be realised inside $$V_{\fin}^{\star}\times\ZZ\approx \{0\}\times V_{\fin}^{\star}\times\ZZ\subseteq\widehat{V}_{\fin}^{\star}$$ for some suitable set $J$, and is of one of the following types:
\begin{equation*}
\begin{aligned}
X_J^{(1)}&:=X_J\times\ZZ\quad\textrm{for $X\in\{A,B,C,D\}$},\\
B_J^{(2)}&:=(B_J\times 2\ZZ)\cup \big(\{\pm\epsilon_j \ | \ j\in J\}\times (2\ZZ+1)\big),\\
C_J^{(2)}&:=(C_J\times 2\ZZ)\cup \big(D_J\times (2\ZZ+1)\big),\\
BC_J^{(2)}&:=(B_J\times 2\ZZ)\cup \big(BC_J\times (2\ZZ+1)\big).
\end{aligned}
\end{equation*}
Let $\widehat{\Delta}\subseteq \widehat{V}_{\fin}^{\star}$ be one of the above locally affine root systems $X_J^{(1)}$ or $X_J^{(2)}$, where $\Delta\subseteq V_{\fin}^{\star}$ is the corresponding locally finite root system of type $X_J$. Thus $\widehat{\Delta}\subseteq \{0\}\times \Delta\times\ZZ$. We set $$\Delta_0=\{\alpha\in\Delta \ | \ (0,\alpha,0)\in\widehat{\Delta}\}.$$ Then
$\Delta_0=\Delta$, unless $\widehat{\Delta}$ is of type $BC_J^{(2)}$, in which case $\Delta_0$ is a root subsystem of $\Delta$ of type $B_J$.

The assignment $\epsilon_j\mapsto e_j$, $j\in J$, induces an $\RR$-linear map 
$\sharp\co (V_{\fin})^*\to V:\mu\mapsto\mu^{\sharp}$
(which is the identity if one identifies $(V_{\fin})^*$ with $\RR^J$). For any $\alpha\in \Delta$, we let $$\check{\alpha}:=\frac{2}{(\alpha,\alpha)}\alpha^{\sharp}\in V_{\fin}$$
denote the \emph{coroot} of $\alpha$. We will also view $\alpha$ as a linear functional on $V$ (and not just on $V_{\fin}$) by setting 
$$\alpha(h):=\langle \alpha^{\sharp},h\rangle\quad\textrm{for all $h\in V$.}$$
Finally, denoting by $\kappa\co \widehat{V}_{\fin}\times\widehat{V}\to\RR$ the bilinear form defined by $$\kappa((z_1,h_1,t_1),(z_2,h_2,t_2))=\langle h_1,h_2\rangle -z_1t_2-z_2t_1,$$
we can extend the map $\sharp\co (V_{\fin})^*\to V$ to an $\RR$-linear map $$\sharp\co \widehat{V}^{\star}\to \widehat{V}:\mu=(\mu_c,\mu^{0},\mu_d)\mapsto \mu^{\sharp}=(-\mu_d,(\mu^{0})^{\sharp},-\mu_c)$$
characterised by the property that $$\mu((z,h,t))=\kappa((z,h,t),\mu^{\sharp})\quad\textrm{for all $(z,h,t)\in \widehat{V}_{\fin}$.}$$
The \emph{coroot} of $(\alpha,n)\in\widehat{\Delta}$ is given by
$$(\alpha,n)^{\vee}=\frac{2}{(\alpha,\alpha)}(0,\alpha,n)^{\sharp}=\big(\tfrac{-2n}{(\alpha,\alpha)},\check{\alpha},0\big).$$

\begin{remark}\label{remark:PECisom1}
As announced in the introduction, we will use the main results of \cite{PECisom} to characterise the positive energy highest weight representations of arbitrary affinisations of simple Hilbert--Lie algebras. We wish to attract the attention of the reader to the fact that the choices of parametrisation of these affinisations that we made in the present paper slightly differ from the choices made in \cite{PECisom}, and this in two respects (these choices being better suited for each of the papers). We now explain these differences in more detail and relate the notation introduced so far to the context of affinisations of simple Hilbert--Lie algebras.

Consider, as in \cite[\S 3.1]{PECisom}, a simple Hilbert--Lie algebra $\kk$, an automorphism $\varphi\in\Aut(\kk)$ of finite order $N_{\varphi}$, and a maximal abelian subalgebra $\ttt_0$ of $\kk^{\varphi}$. For $N\in\NN$, we denote by $$\LLL_{\varphi,N}(\kk):=\{\xi\in C^{\infty}(\RR,\kk) \ | \ \xi(t+\tfrac{2\pi}{N})=\varphi\inv(\xi(t)) \ \forall t\in\RR\}$$
the $\varphi$-twisted loop algebra over $\kk$, whose elements are periodic smooth functions of period $2\pi N_{\varphi}/N$. In the present paper, we made the choice $N=N_{\varphi}$, that is, we consider $2\pi$-periodic functions. This is the first difference with \cite{PECisom}, where the choice $N=1$ is made. However, these two choices yield isomorphic objects: explicit isomorphisms were provided in \cite[Remark~4.3]{PECisom}.

Let now $N\in\{1,N_{\varphi}\}$, and let $D_0(\xi)=\xi'$ be the standard derivation of $\LLL_{\varphi,N}(\kk)$. 
Assume that the corresponding affinisation 
$$\g=\g_N=(\RR\oplus_{\omega_{D_{0}}}\LLL_{\varphi,N}(\kk))\rtimes_{\widetilde{D}_{0}}\RR$$
of $\kk$ is standard, with set of compact roots $\widehat{\Delta}=\Delta(\g,\ttt_0^e)_c\subseteq i(\ttt_0^e)^*$ with respect to the Cartan subalgebra
$$\ttt_0^e=\RR\oplus\ttt_0\oplus\RR$$
of one of the types $A_J^{(1)}$, $B^{(1)}_J$, $C^{(1)}_J$, $D_J^{(1)}$, $B_J^{(2)}$, $C_J^{(2)}$ and $BC_J^{(2)}$ described above (see \cite[\S 3.4 and \S 3.5]{PECisom}). Set 
$\bc=(i,0,0)\in i\ttt_0^e$ and $\bd=(0,0,-i)\in i\ttt_0^e$.

The second difference is purely notational, and concerns the identification of the Cartan subalgebra $\ttt_0^e$ of $\g$ (or rather, of $i\ttt_0^e\subseteq \g_{\CC}$) with the space of triples $\RR\times i\ttt_0\times\RR$: the description of $\widehat{\Delta}$ inside $\sppan_{\ZZ}\{\epsilon_j \ | \ j\in J\}\times\ZZ$ (which is the same in both papers) yields identifications
\begin{equation}
V_{\fin}^{(2)}\approx i\ttt_0\quad \textrm{and}\quad \widehat{V}_{\fin}^{(2)}\stackrel{\sim}{\to} i\ttt_0^e:(z,h,t)\mapsto z\bc+h+t\bd,
\end{equation}
where $V_{\fin}^{(2)}$ denotes the Hilbert space completion of $V_{\fin}$ and $\widehat{V}_{\fin}^{(2)}:=\RR\times V_{\fin}^{(2)}\times\RR$.
The $\RR$-linear map $\sharp\co (V_{\fin})^*\to V$ then coincides with the map $\sharp\co i\ttt_0^*\to i\widehat{\ttt_0}$ defined in \cite[\S 2.3 and \S 3.1]{PECisom}, while its extension $\sharp\co \widehat{V}^{\star}\to \widehat{V}$ coincides with the map $\sharp\co i(\ttt^e_0)^*\to i\widehat{\ttt^e_0}$ defined in \cite[\S 7.2]{PECisom}. Similarly, the bilinear form $\kappa$ on $\widehat{V}_{\fin}$ (or rather, its extension to $\widehat{V}_{\fin}^{(2)}$) coincides with the restriction to $i\ttt_0^e$ of the hermitian extension of the bilinear form $\kappa$ defined in \cite[\S 3.4]{PECisom}. 
Finally, note that, following \cite[\S 3.4]{PECisom}, the root $(\alpha,n)\in\widehat{\Delta}$ of the affinisation $\g_1$ of $\kk$ satisfies
$$(\alpha,n)(h)=\alpha(h)\quad\textrm{for $h\in i\ttt_0$}, \quad (\alpha,n)(\bc)=0 \quad\textrm{and}\quad (\alpha,n)(\bd)=n/N_{\varphi}.$$
Hence the reparametrisation provided by \cite[Remark~4.3]{PECisom} of $(\alpha,n)$ as a root of $\g_{N_{\varphi}}$ yields that
$$(\alpha,n)(z,h,t)=(0,\alpha,n)(z,h,t)=\alpha(h)+nt\quad\textrm{for all $(z,h,t)\in \widehat{V}_{\fin}$},$$
in accordance with our identification of $\widehat{V}^{\star}$ with the dual of $\widehat{V}_{\fin}$.
\end{remark}

\subsection{The Weyl group of \texorpdfstring{$\widehat{\Delta}$}{Delta}}\label{subsection:Weyl_group}
%\subsection{The Weyl group of $\widehat{\Delta}$}\label{subsection:Weyl_group}
Let $S_J$ denote the set of bijections of $J$, which we view as a subgroup of $\GL(V)$ with $w\in S_J$ acting as $w(e_j):=e_{w(j)}$. The \emph{support} of a permutation $w\in S_J$ with fixed-point set $I\subseteq J$ is the set $J\setminus I$. We denote by $S_{(J)}\subseteq S_J$ the subgroup of permutations of $S_J$ with finite support, and we view it as a subgroup of either $\GL(V)$ or $\GL(V_{\fin})$.

We also let $\{\pm 1\}^J\subseteq\RR^J$ act linearly on $V=\RR^J$ by componentwise left multiplication: $\sigma(e_j)=\sigma_je_j$ for $\sigma=(\sigma_j)_{j\in J}\in \{\pm 1\}^J$. The \emph{support} of an element $\sigma=(\sigma_j)_{j\in J}\in \{\pm 1\}^J$ is the set $\{j\in J \ | \ \sigma_j=-1\}$. We denote by $\{\pm 1\}^{(J)}$ (resp. $\{\pm 1\}_2^{(J)}$) the set of elements of $\{\pm 1\}^J$ with finite (resp. finite and even) support, and we again view $\{\pm 1\}^{(J)}$ and $\{\pm 1\}_2^{(J)}$ as subgroups of either $\GL(V)$ or $\GL(V_{\fin})$.

We recall from \cite[\S 2.2]{PEClocfin} that for $X\in\{A,B,C,D,BC\}$, the Weyl group $\WW(X_J)$ of type $X_J$ admits the following description:
\begin{equation*}
\begin{aligned}
\WW(A_J)&=S_{(J)},\\
\WW(B_J)&=\WW(C_J)=\WW(BC_J)=\{\pm 1\}^{(J)}\rtimes S_{(J)},\\
\WW(D_J)&=\{\pm 1\}_2^{(J)}\rtimes S_{(J)}.
\end{aligned}
\end{equation*}

We denote by $\WW=\WW(\Delta_0)$ the Weyl group corresponding to $\Delta_0$, viewed as a subgroup of either $\GL(V)$ or $\GL(V_{\fin})$. Thus $\WW=\WW(X_J)$ if $\widehat{\Delta}$ is of type $X_J^{(1)}$ or $X_J^{(2)}$ (see \S\ref{section:preliminaries:affine}).
Similarly, we denote by $\widehat{\WW}=\widehat{\WW}(X)$ the Weyl group corresponding to $\widehat{\Delta}$, where $X=X_J^{(1)}$ or $X_J^{(2)}$ is the type of $\widehat{\Delta}$, and
we view $\widehat{\WW}$ as a subgroup of either $\GL(\widehat{V})$ or $\GL(\widehat{V}_{\fin})$.

The group $\widehat{\WW}$ is generated by the set of reflections $\big\{r_{(\alpha,n)} \ | \ (\alpha,n)\in\widehat{\Delta}, \ \alpha\neq 0\big\}$, where
\begin{equation}
r_{(\alpha,n)}(z,h,t)= (z,h,t)-(\alpha(h)+nt)\big(\tfrac{-2n}{(\alpha,\alpha)},\check{\alpha},0\big) \quad\textrm{for all $(z,h,t)\in \widehat{V}$.}
\end{equation}
We view $\WW$ as a subgroup of $\widehat{\WW}$, using the identification $$\WW\cong\langle r_{(\alpha,0)} \ | \ \alpha\in\Delta_0\rangle\subseteq\widehat{\WW}.$$
For each $x\in V_{\fin}$, we define the linear automorphism $\tau_x=\tau(x)$ of $\widehat{V}$ (resp. of $\widehat{V}_{\fin}$) by
\begin{equation}\label{eqn:tau}
\tau_x(z,h,t)=\Big( z+\langle x,h\rangle+\frac{t\langle x,x\rangle}{2},h+tx,t\Big)\quad\textrm{for all $(z,h,t)\in \widehat{V}$}.
\end{equation}
Then $\tau_{x_1}\tau_{x_2}=\tau_{x_1+x_2}$ for all $x_1,x_2\in V_{\fin}$. Moreover, 
$r_{(\alpha,0)}r_{(\alpha,n)}=\tau_{n\check{\alpha}}$ for all $\alpha\in \Delta$ and $n\in\ZZ$. Since for any $\alpha\in\Delta$ there exists some $\beta\in\Delta_0$ such that $r_{(\alpha,0)}=r_{(\beta,0)}$ (as can be seen from a quick inspection of the locally affine root systems), we thus get a semi-direct decomposition 
$$\widehat{\WW}=\tau(\TTT)\rtimes \WW\subseteq\GL(V),$$
where $\TTT$ is the additive subgroup of $V_{\fin}$ generated by $\big\{n\check{\alpha} \ | \ (\alpha,n)\in\widehat{\Delta}\big\}$ (see \cite[\S 3.4]{convexhull} or else \cite[\S 3.6]{PECisom}). Set $Q:=\bigoplus_{j\in J}{\ZZ e_j}\subseteq V_{\fin}$. For $\widehat{\WW}=\widehat{\WW}(X)$ of type $X$, one can then describe the corresponding lattice $\TTT=\TTT(X)\subseteq Q$ of type $X$ as follows (see \cite[Proposition~3.12]{convexhull}): 
\begin{equation*}
\begin{aligned}
\TTT(A_J^{(1)}) & =\Big\{\sum_{j\in J}{n_je_j}\in Q \ | \ \sum_{j\in J}{n_j}=0\Big\},\\
\TTT(B^{(1)}_J) = \TTT(D^{(1)}_J) = \TTT(C^{(2)}_J) 
& = \Big\{\sum_{j\in J}{n_je_j}\in Q \ | \ \sum_{j\in J}{n_j}\in 2\ZZ\Big\},\\
\TTT(B_J^{(2)}) & =\Big\{\sum_{j\in J}{n_je_j}\in Q \ | \ n_j\in 2\ZZ \ \forall j\in J\Big\}, 
\\
\TTT(C^{(1)}_J) = \TTT(BC^{(2)}_J)  & = Q = \ZZ^{(J)}.\\
\end{aligned}
\end{equation*}
Thus any element $\widehat{w}\in\widehat{\WW}$ can be uniquely written as a product $\widehat{w}=\tau_x\sigma w$ for some $x\in\TTT$, some $\sigma\in\{\pm 1\}^{(J)}$ and some $w\in S_{(J)}$. For $x=\sum_{j\in J}{n_je_j}\in\TTT$, we call the subset $\{j\in J \ | \ n_j\neq 0\}$ of $J$ the \emph{support} of $x$. 

\begin{remark}
The Weyl group $\widehat{\WW}$ may also be viewed, as in the introduction, as a subgroup of $\GL(\widehat{V}^{\star})$ using the bijection $\sharp\co \widehat{V}^{\star}\to \widehat{V}$, or in other words, by requiring that $$(\widehat{w}.\mu)^{\sharp}=\widehat{w}.\mu^{\sharp}\quad\textrm{for all $\widehat{w}\in\widehat{\WW}$ and $\mu\in \widehat{V}^{\star}$.}$$
\end{remark}

\subsection{The positive energy condition}\label{subsection:PEC}
In the sequel, we fix some $$\la=(\la_c,\la^{0},\la_d)\in\RR\times \RR^{J}\times\RR\approx \widehat{V}^{\star}\quad\textrm{and}\quad\chi=(\chi_c,\chi^{0},\chi_d)\in\RR\times \RR^{J}\times\RR=\widehat{V},$$
and we write 
$$\la^{0}=(\la_j)_{j\in J}=\sum_{j\in J}{\la_j\epsilon_j}\in\RR^{J}\approx (V_{\fin})^*\quad\textrm{and}\quad \chi^{0}=(d_j)_{j\in J}=\sum_{j\in J}{d_je_j}\in\RR^{J}=V.$$

\begin{definition}\label{def:PEC}
We say that the triple $(J,\la,\chi)$ \emph{satisfies the positive energy condition (PEC) for $\widehat{\WW}$} if the set $\la(\widehat{\WW}.\chi-\chi)$ is bounded from below. We moreover say that $(J,\la,\chi)$ is of \emph{minimal energy for $\widehat{\WW}$} if $\inf\big(\la\big(\widehat{\WW}.\chi-\chi\big)\big)=0$, that is, if $\la(\widehat{w}.\chi-\chi)\geq 0$ for all $\widehat{w}\in\widehat{\WW}$. 

Note that $\widehat{w}.\chi-\chi\in \widehat{V}_{\fin}$ for any $\widehat{w}\in\widehat{\WW}$, so that $\la(\widehat{w}.\chi-\chi)$ is defined. Indeed, writing $\widehat{w}=\tau_xw$ for some $x\in\TTT$ and $w\in\WW$, we have
\begin{equation}\label{eqn:PECwelldef}
\begin{aligned}
\widehat{w}.\chi-\chi &=\tau_xw.(\chi_c,\chi^{0},\chi_d)-(\chi_c,\chi^{0},\chi_d)\\
&= \tau_x\bigg(\chi_c,\sum_{j\in J}{d_jw(e_j)},\chi_d\bigg)-\bigg(\chi_c,\sum_{j\in J}{d_je_j},\chi_d\bigg)\\
&= \bigg(\chi_c+\sum_{j\in J}{d_j\langle w(e_j),x\rangle}+\frac{\chi_d\langle x,x\rangle}{2},\sum_{j\in J}{d_jw(e_j)}+\chi_dx,\chi_d\bigg)-\bigg(\chi_c,\sum_{j\in J}{d_je_j},\chi_d\bigg)\\
&= \bigg(\sum_{j\in J}{d_j\langle w(e_j),x\rangle}+\frac{\chi_d\langle x,x\rangle}{2},\sum_{j\in J}{d_j(w(e_j)-e_j)}+\chi_dx,0\bigg)\in\widehat{V}_{\fin}.\\
\end{aligned}
\end{equation}
\end{definition}

\begin{remark}\label{remark:lachi_chila}
Any character $\chi\co\ZZ[\widehat{\Delta}]\to\RR$ can be identified with an element of $\widehat{V}$ by requiring that 
$$\chi(\mu)=\kappa(\mu^{\sharp},\chi)\quad\textrm{for all $\mu\in \ZZ[\widehat{\Delta}]$.}$$
With this identification, and since the Weyl group $\widehat{\WW}$ preserves the bilinear form $\kappa$ (see \cite[\S 3.6]{PECisom}), we deduce for all $\widehat{w}\in\widehat{\WW}$ that
$$\chi(\widehat{w}.\la-\la)=\kappa(\widehat{w}.\la^{\sharp}-\la^{\sharp},\chi)=\kappa(\widehat{w}\inv.\chi-\chi,\la^{\sharp})=\la(\widehat{w}\inv.\chi-\chi).$$
In particular, the conditions $\inf \big(\chi\big(\widehat{\WW}.\la-\la\big)\big)>-\infty$ and $\inf \big(\chi\big(\widehat{\WW}.\la-\la\big)\big)=0$ are respectively equivalent to the conditions $\inf \big(\la\big(\widehat{\WW}.\chi-\chi\big)\big)>-\infty$ and $\inf \big(\la\big(\widehat{\WW}.\chi-\chi\big)\big)=0$. Thus the notions of positive energy and minimal energy introduced in Definition~\ref{def:PEC} indeed coincide with the corresponding notions from the introduction.
\end{remark}

Let $\widehat{w}\in\widehat{\WW}$. Write $\widehat{w}=\tau_x\sigma w\inv$ for some $x=\sum_{j\in J}{n_je_j}\in\TTT$, some $\sigma=(\sigma_j)_{j\in J}\in \{\pm 1\}^{(J)}$ and some $w\in S_{(J)}$ (see \S\ref{subsection:Weyl_group}).
It then follows from (\ref{eqn:PECwelldef}) that
\begin{equation}\label{eqn:first_eqn}
\begin{aligned}
\la(\widehat{w}.\chi-\chi)&=\la(\tau_x\sigma w\inv.\chi-\chi) \\
&=\la\bigg(\sum_{j\in J}{d_j\langle \sigma_{w\inv(j)}e_{w\inv(j)},x\rangle}+\frac{\chi_d\langle x,x\rangle}{2},\sum_{j\in J}{d_j(\sigma_{w\inv(j)}e_{w\inv(j)}-e_j)}+\chi_dx,0\bigg)\\
&=\frac{\la_c\chi_d}{2}\langle x,x\rangle+\la_c\sum_{j\in J}{\sigma_jd_{w(j)}\langle e_j,x\rangle}+\chi_d\la^{0}(x)+\sum_{j\in J}{d_{w(j)}\la^{0}(\sigma_je_{j}-e_{w(j)})}\\
&=\frac{\la_c\chi_d}{2}\sum_{j\in J}{n_j^2}+\la_c\sum_{j\in J}{n_j\sigma_jd_{w(j)}}+\chi_d\sum_{j\in J}{n_j\la_j}+\sum_{j\in J}{\la_j(\sigma_jd_{w(j)}-d_j)}.\\
\end{aligned}
\end{equation}

\subsection{PEC for locally finite root systems}
The concept of PEC for a triple $(J,\la,\chi)$ extends the concept of PEC for the triple $(J,\la^{0},\chi^{0})$ introduced in \cite[\S 2.3]{PEClocfin}. We recall that the triple $(J,\la^{0},\chi^{0})$ is said to \emph{satisfy the PEC for $\WW$} if $\la^{0}(\WW.\chi^{0}-\chi^{0})$ is bounded from below. A complete characterisation of such triples (with some suitable finiteness assumption on $\la^{0}$), analoguous to the one we give in this paper, was provided in \cite{PEClocfin}. We now briefly review this characterisation, as it will be the starting point of our study of the PEC for triples $(J,\la,\chi)$: if $(J,\la,\chi)$ satisfies the PEC for $\widehat{\WW}$, then $(J,\la^{0},\chi^{0})$ satisfies the PEC for $\WW$.

Let $\la,\chi$ be as in \S\ref{subsection:PEC}. We first recall some notation and terminology from \cite[Section~3]{PEClocfin}. Define the functions $$D\co J\to \RR:j\mapsto d_j \quad\textrm{and}\quad\Lambda\co J\to\RR:j\mapsto\la_j,$$ as well as the sets $J_n:=\Lambda\inv(n)$ for each $n\in\RR$. For $r\in\RR$, we also set
$$J_n^{>r}:=\{j\in J_n \ | \ d_j>r\}\quad\textrm{and}\quad J_n^{<r}:=\{j\in J_n \ | \ d_j<r\}.$$

\begin{definition}
We call $\la^{0}$ \emph{finite} if the subset $\Lambda(J)$ of $\RR$ is finite.
\end{definition}

\begin{definition}
Let $r,n\in\RR$. A subset $I\subseteq J$ of the form $J_n^{>r}$ or $J_n^{<r}$ is said to be \emph{summable} if $\sum_{j\in I}{|d_j-r|}<\infty$. 
\end{definition}

\begin{definition}\label{definition:accumulation_point}
Let $I\subseteq J$. We call $r\in\RR$ an \emph{accumulation point for $I$} if either $r$ is an accumulation point for $D(I)$, or if $D(I')=\{r\}$ for some infinite subset $I'\subseteq I$.
\end{definition}

\begin{definition}[{\cite[Definition~5.2]{PEClocfin}}]\label{definition:cones_fin}
For a set $J$ and a tuple $\la^{0}=(\la_j)_{j\in J}\in\RR^{J}$, we define the following 
cones in $\RR^{J}$:
\begin{equation*}
\begin{aligned}
C_{\min}(\la^{0},A_J)&=\{(d_j)_{j\in J}\in \RR^J\ | \  \forall i,j\in J: \ \la_i<\la_j\implies  d_i\geq d_j\},\\
C_{\min}(\la^{0},B_J)&=\{(d_j)_{j\in J}\in \RR^J \ | \ \forall j\in J: \ \la_jd_j\leq 0\quad\textrm{and}\quad \forall i,j\in J: \ |\la_i|<|\la_j|\implies  |d_i|\leq |d_j|\}.\\
\end{aligned}
\end{equation*}
We also define the vector subspace $\ell^1(J):=\{(d_j)_{j\in J}\in\RR^{J}  \ | \ \sum_{j\in J}{|d_j|}<\infty\}$ of $\RR^{J}$.
\end{definition}

We next recall the characterisation of triples $(J,\la^{0},\chi^{0})$ of minimal energy.
\begin{prop}[{\cite[Proposition~5.3]{PEClocfin}}]\label{prop:minimality}
Let $X\in\{A,B\}$ and set $\WW=\WW(X_J)$. For a triple $(J,\la^{0},\chi^{0})$, the following assertions are equivalent:
\begin{enumerate}
\item
$\inf\la^{0}(\WW.\chi^{0}-\chi^{0})=0$.
\item
$\chi^{0}\in C_{\min}(\la^{0},X_J)$.
\end{enumerate}
\end{prop}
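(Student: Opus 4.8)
The statement to prove is Proposition~\ref{prop:minimality}, a characterisation of minimal-energy triples $(J,\la^0,\chi^0)$ for the Weyl group $\WW=\WW(X_J)$ with $X\in\{A,B\}$: namely that $\inf\la^0(\WW.\chi^0-\chi^0)=0$ is equivalent to $\chi^0\in C_{\min}(\la^0,X_J)$. Since this is quoted from \cite[Proposition~5.3]{PEClocfin}, one strategy is simply to cite it; but to make the paper self-contained I would reconstruct the argument, which is elementary once one has the explicit descriptions of $\WW(A_J)=S_{(J)}$ and $\WW(B_J)=\{\pm1\}^{(J)}\rtimes S_{(J)}$ from \S\ref{subsection:Weyl_group}. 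The key computational input is the specialisation of \eqref{eqn:first_eqn}: for the finite-rank (locally finite) situation we drop the $\tau_x$-part (there is no $x$, equivalently $x=0$ and $\chi_d$ plays no role), so for $w\in S_{(J)}$ and $\sigma\in\{\pm1\}^{(J)}$ (with $\sigma$ trivial in the $A_J$ case),
\[
\la^0(\widehat w.\chi^0-\chi^0)=\sum_{j\in J}\la_j(\sigma_j d_{w(j)}-d_j).
\]
Everything reduces to analysing this finite sum.

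\textbf{Type $A_J$.} Here $\sigma\equiv 1$ and the quantity is $\sum_j \la_j(d_{w(j)}-d_j)$, a sum over the finite support of $w$. The direction $(2)\Rightarrow(1)$: assume $\chi^0\in C_{\min}(\la^0,A_J)$, i.e. $\la_i<\la_j\implies d_i\ge d_j$. Given $w\in S_{(J)}$, decompose $w$ into cycles and reduce to a single cycle, or better, write $w$ as a product of transpositions adapted to the ordering of the $\la_j$'s and check the sum telescopes to something $\ge0$; the cleanest route is the classical rearrangement-inequality argument: $\sum_j\la_j d_{w(j)}$ is maximised over $w$ (on any finite support) by the permutation that sorts $d$ oppositely to $\la$ — and the hypothesis says $d$ is already anti-sorted relative to $\la$ on the support, so $\sum_j\la_j d_{w(j)}\le\sum_j\la_j d_j$, giving $\la^0(\widehat w.\chi^0-\chi^0)\ge0$; taking $w=\id$ shows the infimum is exactly $0$. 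The direction $(1)\Rightarrow(2)$: contrapositively, if the condition fails there exist $i,j$ with $\la_i<\la_j$ but $d_i<d_j$; applying the transposition $w=(i\ j)$ gives $\la^0(\widehat w.\chi^0-\chi^0)=(\la_i-\la_j)(d_j-d_i)<0$, so the infimum is negative.

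\textbf{Type $B_J$.} Now $\sigma$ ranges over finite-support sign patterns. The relevant invariant is $|\la_j|$ versus $|d_j|$ and the signs $\la_j d_j$. For $(2)\Rightarrow(1)$: assume $\la_j d_j\le0$ for all $j$ and $|\la_i|<|\la_j|\implies|d_i|\le|d_j|$. First handle the sign flips: for fixed $w$, choosing $\sigma_j=-\mathrm{sign}(\la_j d_{w(j)})$ minimises $\sum_j\la_j\sigma_j d_{w(j)}$ to $-\sum_j|\la_j||d_{w(j)}|$, so it suffices to show $\sum_j|\la_j||d_{w(j)}|\le\sum_j|\la_j||d_j|$, which is again the rearrangement inequality using $|\la_i|<|\la_j|\implies|d_i|\le|d_j|$ — combined with $\la^0(\widehat w.\chi^0-\chi^0)\ge\sum_j\la_j d_j-\sum_j|\la_j||d_j|=\sum_j(\la_j d_j-|\la_j||d_j|)=-2\sum_j|\la_j||d_j|+\ldots$, wait, more carefully: $\la^0(\widehat w.\chi^0-\chi^0)=\sum_j\la_j\sigma_j d_{w(j)}-\sum_j\la_j d_j\ge -\sum_j|\la_j||d_{w(j)}|-\sum_j\la_j d_j\ge-\sum_j|\la_j||d_j|-\sum_j\la_j d_j=\sum_j(-|\la_j||d_j|-\la_j d_j)\ge0$ since $\la_j d_j\le0$ means $-\la_j d_j=|\la_j d_j|=|\la_j||d_j|$, so the bracket is $0$ for every $j$; and the infimum $0$ is attained at $w=\id$, $\sigma\equiv1$. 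For $(1)\Rightarrow(2)$: if $\la_j d_j>0$ for some $j$, flip $\sigma_j=-1$ alone to get value $-2\la_j d_j<0$; if $|\la_i|<|\la_j|$ but $|d_i|>|d_j|$, use the transposition $(i\ j)$ together with sign flips to make all contributing terms negative, producing a negative value — this needs a short case analysis on the signs of $\la_i,\la_j,d_i,d_j$, which is the one slightly fiddly bookkeeping step.

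\textbf{Main obstacle.} There is no deep obstacle: the whole proof is the rearrangement inequality on finite supports plus a finite sign-optimisation, and the only place demanding care is the $(1)\Rightarrow(2)$ direction in type $B_J$, where one must exhibit, for each failure of the $|\la|$–$|d|$ monotonicity, an explicit Weyl group element (a transposition dressed with the right signs) driving the functional negative; getting the signs right in all sub-cases is the bulk of the (minor) work. I would, however, most likely just cite \cite[Proposition~5.3]{PEClocfin} and relegate the above to a remark, since it is not new here.
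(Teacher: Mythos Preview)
The paper itself gives no proof of this proposition: it is stated with the citation \cite[Proposition~5.3]{PEClocfin} and nothing more. Your own final recommendation --- to simply cite \cite{PEClocfin} --- therefore matches the paper exactly, and your additional sketch goes beyond what the paper does.

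That sketch is essentially correct and follows the expected rearrangement-inequality route, but there is one slip worth flagging in the type~$A_J$ direction $(2)\Rightarrow(1)$. You write that $\sum_j\la_j d_{w(j)}$ is \emph{maximised} when $d$ is sorted oppositely to $\la$, conclude $\sum_j\la_j d_{w(j)}\le\sum_j\la_j d_j$, and then assert $\la^0(\widehat{w}.\chi^0-\chi^0)\ge 0$. The rearrangement inequality actually says the opposite: anti-sorting \emph{minimises} the bilinear sum, so under hypothesis~(2) the identity permutation already achieves the minimum and $\sum_j\la_j d_{w(j)}\ge\sum_j\la_j d_j$ for every $w$; this is what gives $\la^0(\widehat{w}.\chi^0-\chi^0)\ge 0$. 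Your final inequality is right, but the intermediate step has the direction reversed. In the type~$B_J$ case your computation is clean; in fact the ``fiddly case analysis'' you anticipate for $(1)\Rightarrow(2)$ is unnecessary: once $\la_k d_k\le 0$ is established for all $k$, transposing $i,j$ with signs $\sigma_i,\sigma_j$ chosen so that $\la_i\sigma_i d_j\le 0$ and $\la_j\sigma_j d_i\le 0$ yields directly $(|\la_i|-|\la_j|)(|d_i|-|d_j|)<0$, with no sub-cases.
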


Note that for $X\in\{C,D,BC\}$, if we denote by $C_{\min}(\la^{0},X_J)$ the set of tuples $\chi^{0}\in\RR^J$ for which $\inf\big(\la^{0}(\WW(X_J).\chi^{0}-\chi^{0})\big)=0$, then the inclusions $$\WW(A_J)\subseteq \WW(D_J)\subseteq \WW(B_J)=\WW(C_J)=\WW(BC_J)$$ of Weyl groups imply that
$$C_{\min}(\la^{0},BC_J)= C_{\min}(\la^{0},C_J)=C_{\min}(\la^{0},B_J)\subseteq C_{\min}(\la^{0},D_J)\subseteq C_{\min}(\la^{0},A_J).$$

Finally, we recall the characterisation of triples $(J,\la^{0},\chi^{0})$ of positive energy.
\begin{theorem}[{\cite[Theorems~5.10 and 5.12]{PEClocfin}}]\label{theorem:charact_PEC_locally_finite}
Let $J$ be a set, and let $\la^{0}=(\la_j)_{j\in J}$ and $\chi^{0}=(d_j)_{j\in J}$ be elements of $\RR^J$. Assume that $\la^{0}$ is finite. Then for $X\in\{A,B\}$, the following assertions are equivalent:
\begin{enumerate}
\item
$(J,\la^{0},\chi^{0})$ satisfies the PEC for $\WW(X_J)$.
\item
$\chi^{0}\in C_{\min}(\la^{0},X_J)+\ell^1(J)$.
\end{enumerate}
\end{theorem}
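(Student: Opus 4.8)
The plan is to establish the two implications separately. The implication $(2)\Rightarrow(1)$ is a soft estimate; the implication $(1)\Rightarrow(2)$ is the substantial part, which I would prove by constructing the decomposition $\chi^{0}=\chi_{\min}+\chi_{\su}$ by hand, level set by level set, reading off the summability of the error term $\chi_{\su}$ directly from the positive energy condition.

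For $(2)\Rightarrow(1)$: since $\la^{0}$ is finite there is $M>0$ with $|\la_j|\le M$ for all $j$, so $|\la^{0}(v)|\le M\|v\|_1$ for every $v\in V_{\fin}$, where $\|\cdot\|_1$ is the $\ell^1$-norm in the basis $\{e_j\}$; moreover every $w\in\WW(X_J)$ acts on $V_{\fin}$ by a signed permutation of $\{e_j\}$, hence as an $\ell^1$-isometry. Writing $\chi^{0}=\chi_{\min}+\chi_{\su}$ as in $(2)$ and splitting $\la^{0}(w.\chi^{0}-\chi^{0})=\la^{0}(w.\chi_{\min}-\chi_{\min})+\la^{0}(w.\chi_{\su}-\chi_{\su})$, the first summand is $\ge 0$ by Proposition~\ref{prop:minimality}, while the second is $\ge-M\|w.\chi_{\su}-\chi_{\su}\|_1\ge-2M\|\chi_{\su}\|_1$. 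Hence $\la^{0}(\WW.\chi^{0}-\chi^{0})$ is bounded below by $-2M\|\chi_{\su}\|_1$, i.e. the PEC holds.

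For $(1)\Rightarrow(2)$ I would start from the energy formula obtained from \eqref{eqn:first_eqn} with trivial translation part: writing a general element of $\WW(X_J)$ as $\sigma w\inv$ with $w\in S_{(J)}$ and $\sigma=(\sigma_j)_{j\in J}\in\{\pm1\}^{(J)}$ ($\sigma$ trivial for $X=A$), one has $\la^{0}(\sigma w\inv.\chi^{0}-\chi^{0})=\sum_{j\in J}\la_j(\sigma_jd_{w(j)}-d_j)$. Since $\la^{0}$ is finite, $J=\bigsqcup_{n\in\Lambda(J)}J_n$ is a finite partition into level sets; order $\Lambda(J)=\{m_1<\dots<m_k\}$ and set $B_{\le p}:=\bigcup_{q\le p}J_{m_q}$ and $B_{>p}:=\bigcup_{q>p}J_{m_q}$. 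For $X=A$, a tuple lies in $C_{\min}(\la^{0},A_J)$ iff for each $p$ all its coordinates on $B_{\le p}$ dominate all its coordinates on $B_{>p}$. The key claim is that the PEC forces, for each $1\le p\le k-1$, a threshold $r_p\in\RR$ such that $\{j\in B_{\le p}:d_j<r_p\}$ and $\{j\in B_{>p}:d_j>r_p\}$ are both summable, and, after replacing $r_p$ by $\min(r_1,\dots,r_p)$, that $r_1\ge\dots\ge r_{k-1}$. Granting this, one defines $\chi_{\min}$ by replacing, for $j\in J_{m_q}$, the coordinate $d_j$ with $\min(r_{q-1},\max(r_q,d_j))$ (with $r_0:=+\infty$, $r_k:=-\infty$); a direct check gives $\chi_{\min}\in C_{\min}(\la^{0},A_J)$ and $\chi^{0}-\chi_{\min}\in\ell^1(J)$, the latter because a coordinate $d_j$ with $j\in J_{m_q}$ is either not moved, or moved by $d_j-r_{q-1}$ with $j$ in the summable set $\{j\in B_{\le q-1}:d_j>r_{q-1}\}$, or moved by $r_q-d_j$ with $j$ in the summable set $\{j\in B_{\le q}:d_j<r_q\}$. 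To prove the claim I would argue by contradiction: if no admissible $r_p$ exists for some $p$, then for every $r\in\RR$ at least one of $\sum_{j\in B_{\le p},\,d_j<r}(r-d_j)$ and $\sum_{j\in B_{>p},\,d_j>r}(d_j-r)$ is infinite; a case analysis on whether $D(B_{\le p})$, $D(B_{>p})$ are bounded and whether these blocks are finite (cf. Definition~\ref{definition:accumulation_point}) then produces, for arbitrarily large $N$, finitely many disjoint pairs $(i_\ell,j_\ell)$ with $i_\ell\in B_{\le p}$, $j_\ell\in B_{>p}$, $d_{i_\ell}\le d_{j_\ell}$ and $\sum_\ell(d_{j_\ell}-d_{i_\ell})\ge N$; applying the involution $w:=\prod_\ell(i_\ell\,j_\ell)\in S_{(J)}$ then gives $\la^{0}(w.\chi^{0}-\chi^{0})=\sum_\ell(\la_{i_\ell}-\la_{j_\ell})(d_{j_\ell}-d_{i_\ell})\le-(m_{p+1}-m_p)N$, contradicting the PEC.

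For $X=B$ the argument is parallel but is run with the absolute values $|\la_j|$, and permutations are combined with sign flips: a single flip $\sigma_j=-1$ at indices where $\la_jd_j>0$ shows that $\{j:\la_jd_j>0\}$ is summable (the sign condition in $C_{\min}(\la^{0},B_J)$, up to an $\ell^1$-error), and transpositions decorated with signs, exchanging indices between blocks now grouped by the value of $|\la_j|$, produce thresholds controlling $|d_j|$ and hence the monotonicity condition $|\la_i|<|\la_j|\Rightarrow|d_i|\le|d_j|$. I expect the main obstacle to be precisely the contradiction step establishing the thresholds $r_p$: it requires a careful combinatorial selection of ``bad'' Weyl group elements, handling separately bounded versus unbounded $D$-values and finite versus infinite level sets, and in the $B$ case this bookkeeping roughly doubles since each permutation may carry signs. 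The remaining verifications — that the constructed $\chi_{\min}$ lies in $C_{\min}$ and differs from $\chi^{0}$ by an $\ell^1$-element, and the monotone choice of thresholds — are routine.
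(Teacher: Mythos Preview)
The paper does not give its own proof of this statement: Theorem~\ref{theorem:charact_PEC_locally_finite} is quoted verbatim from \cite[Theorems~5.10 and 5.12]{PEClocfin} and used as a black box, so there is no in-paper argument to compare against. That said, your outline is a faithful reconstruction of what such a proof must look like and is essentially correct.

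A few comments on the details. In your $\ell^1$ bookkeeping after clipping, the phrase ``moved by $d_j-r_{q-1}$ with $j$ in the summable set $\{j\in B_{\le q-1}:d_j>r_{q-1}\}$'' has the indices reversed: for $j\in J_{m_q}$ one has $j\in B_{>q-1}$, and the relevant summable set is $\{j\in B_{>q-1}:d_j>r_{q-1}\}$. The monotonisation step $r_p\mapsto\min(r_1,\dots,r_p)$ is fine, but you should note why the \emph{upper} summability survives: if $r_p'=r_q$ for some $q<p$, then $\{j\in B_{>p}:d_j>r_p'\}\subseteq\{j\in B_{>q}:d_j>r_q\}$, and the latter is summable with respect to $r_q=r_p'$.

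The one place that needs genuine care is, as you say, the contradiction producing the bad pairs. The clean cases (one block finite; $D$ unbounded on one side; both $f_-$ and $f_+$ infinite at a common $r$) go through exactly as you sketch. The delicate case is when $\sup\{r:f_-(r)<\infty\}=\inf\{r:f_+(r)<\infty\}=:a$ with, say, $f_-(a)<\infty$ but $f_+(a)=\infty$ and $f_-(a+\epsilon)=\infty$ for all $\epsilon>0$: here $B_{\le p}$ may have no indices with $d_i\le a$ but only an accumulation at $a$ from above, and one cannot simply pair arbitrary $j$'s with $i$'s below $a$. The fix is to first choose a finite $F\subseteq\{j\in B_{>p}:d_j>a\}$ with $\sum_{j\in F}(d_j-a)$ large, set $\epsilon:=\min_{j\in F}(d_j-a)>0$, and then pick the $i$'s with $d_i<a+\epsilon$ but with $d_i$ close enough to $a$ that $\sum(d_i-a)$ stays bounded; alternatively, offset the enumerations (pair $j_\ell$ with $i_{M+\ell}$) to force $d_{i}\le d_{j}$ and a divergent sum of differences. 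Either way this case does close, but it is worth writing out explicitly.

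For $X=B$, your plan (sign flips give the $\la_jd_j\le 0$ condition up to $\ell^1$, then run the $A$-argument on the blocks $\{j:|\la_j|=\text{const}\}$ with signed transpositions controlling $|d_j|$) is the right shape; the bookkeeping indeed roughly doubles but no new idea is needed.
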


\begin{lemma}[{\cite[Lemma~5.8]{PEClocfin}}]\label{lemma:BCD}
Let $X\in\{B,C,D,BC\}$, and assume that $\la^{0}$ is finite. Then $(J,\la^{0},\chi^{0})$ satisfies the PEC for $\WW(X_J)$ if and only if it satisfies the PEC for $\WW(B_J)$.
\end{lemma}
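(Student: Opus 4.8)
\textbf{Proof plan for Lemma~\ref{lemma:BCD}.}

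The plan is to exploit the chain of inclusions of Weyl groups recorded just before Proposition~\ref{prop:minimality}, namely
$\WW(A_J)\subseteq\WW(D_J)\subseteq\WW(B_J)=\WW(C_J)=\WW(BC_J)$,
together with the characterisation of positive energy for $\WW(B_J)$ in Theorem~\ref{theorem:charact_PEC_locally_finite}. Since $\WW(B_J)=\WW(C_J)=\WW(BC_J)$ as subgroups of $\GL(V)$, the cases $X\in\{B,C,BC\}$ are literally the same statement, so the only real content is the case $X=D$: we must show that $(J,\la^{0},\chi^{0})$ satisfies the PEC for $\WW(D_J)$ if and only if it satisfies the PEC for $\WW(B_J)$.

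One direction is immediate: because $\WW(D_J)\subseteq\WW(B_J)$, the set $\la^{0}(\WW(D_J).\chi^{0}-\chi^{0})$ is a subset of $\la^{0}(\WW(B_J).\chi^{0}-\chi^{0})$, so if the latter is bounded below then so is the former; hence PEC for $\WW(B_J)$ implies PEC for $\WW(D_J)$. For the converse, suppose $(J,\la^{0},\chi^{0})$ satisfies the PEC for $\WW(D_J)$; I want to deduce PEC for $\WW(B_J)$. Here I would argue that the ``extra'' elements of $\WW(B_J)$ — those of the form $\sigma w$ with $\sigma\in\{\pm1\}^{(J)}$ of odd support — cannot drive $\la^{0}(w'.\chi^{0}-\chi^{0})$ to $-\infty$ if the even-support ones do not. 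Concretely, given any $w'=\sigma w\in\WW(B_J)$ with $\sigma$ of odd support, pick an index $j_0\in J$ (one can always do so since $J$ is infinite, in fact any $j_0$ outside the supports of $\sigma$, $w$ and not in the relevant finite part of $\chi^{0}$ will serve) and replace $\sigma$ by $\sigma':=\sigma\cdot(\textrm{sign change at }j_0)$, which now has even support, so $\sigma'w\in\WW(D_J)$. Comparing $\la^{0}(\sigma w.\chi^{0}-\chi^{0})$ with $\la^{0}(\sigma'w.\chi^{0}-\chi^{0})$, the two differ only in the $j_0$-term, and using formula~(\ref{eqn:first_eqn}) (with $x=0$, since we are in the locally finite setting) this difference equals $\pm 2\la_{j_0}d_{w(j_0)}$, which — by choosing $j_0$ appropriately among the indices outside a large enough finite set, and invoking the finiteness of $\la^{0}$ — is bounded in absolute value by a constant depending only on $(J,\la^{0},\chi^{0})$ and not on $w'$. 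Hence $\la^{0}(\WW(B_J).\chi^{0}-\chi^{0})$ is bounded below by $\inf\la^{0}(\WW(D_J).\chi^{0}-\chi^{0})$ minus that constant, giving PEC for $\WW(B_J)$.

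The main obstacle is making the ``choose $j_0$ cleverly'' step fully rigorous: one needs to ensure that such a $j_0$ exists with $\la_{j_0}d_{w(j_0)}$ (or the relevant boundary term) uniformly controlled, across \emph{all} $w'\in\WW(B_J)$ simultaneously. This is where the hypothesis that $\la^{0}$ is finite is used: $\Lambda(J)$ being finite means the values $\la_{j_0}$ range over a finite set, and for $j_0$ outside the support of $\sigma w$ one has $d_{w(j_0)}=d_{j_0}$, so the problematic term involves the behaviour of $d_j$ on the complement of a finite set — exactly the data that PEC for $\WW(D_J)$ already constrains. I would either package this directly, or, more cleanly, fall back on Theorem~\ref{theorem:charact_PEC_locally_finite}: since that theorem gives PEC for $\WW(B_J)$ iff $\chi^{0}\in C_{\min}(\la^{0},B_J)+\ell^1(J)$, and since the analogous description of $C_{\min}(\la^{0},D_J)$ together with the chain of inclusions of minimal-energy cones shows $C_{\min}(\la^{0},D_J)+\ell^1(J)$ and $C_{\min}(\la^{0},B_J)+\ell^1(J)$ coincide up to an $\ell^1$ perturbation — in fact one checks directly that a tuple lies in $C_{\min}(\la^{0},D_J)$ modulo $\ell^1(J)$ iff it lies in $C_{\min}(\la^{0},B_J)$ modulo $\ell^1(J)$, the discrepancy being concentrated on the (at most one-dimensional worth of) indices where the sign constraint $\la_jd_j\le 0$ can be violated in the $D$-cone but not the $B$-cone, which is absorbed into $\ell^1(J)$ — the equivalence follows. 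I expect the direct argument via~(\ref{eqn:first_eqn}) to be the shortest route.
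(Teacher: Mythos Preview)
The paper does not prove this lemma; it simply records it as a citation of \cite[Lemma~5.8]{PEClocfin}. So there is no ``paper's own proof'' to compare against, and your proposal must be judged on its own merits.

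Your overall strategy is sound and is indeed the natural one (it is the same idea the paper uses in the affine analogue, Proposition~\ref{prop:other_types}): only the case $X=D$ has content, and one wants to correct an odd sign-change $\sigma$ to an even one by a single extra flip. However, your ``direct argument via~(\ref{eqn:first_eqn})'' has a genuine gap. You let $j_0$ depend on $w'=\sigma w$, choosing it outside the (finite) support of $w'$, and then need the correction term $2\la_{j_0}d_{j_0}$ to be bounded below \emph{uniformly in $w'$}. This fails in general: take $J=\NN$, $\la_j\equiv m>0$ and $d_j=-j$. Then $(J,\la^0,\chi^0)$ satisfies the PEC for $\WW(D_J)$ (indeed $\la^0(\sigma w^{-1}.\chi^0-\chi^0)=-2m\sum_{j\in\mathrm{supp}(\sigma)}d_{w(j)}\geq 0$), but for any $w'$ with support $\{1,\dots,N\}$ your forced choice $j_0>N$ gives a correction $2m d_{j_0}\leq -2mN$, which tends to $-\infty$. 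Your fallback via Theorem~\ref{theorem:charact_PEC_locally_finite} is also incomplete as stated, since that theorem (in this paper) only covers $X\in\{A,B\}$, and you would need the $D$-case characterisation as an independent input.

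The fix is to \emph{fix} $j_0\in J$ once and for all and use the index-$2$ coset decomposition $\WW(B_J)=\WW(D_J)\sqcup\WW(D_J)\sigma^{j_0}$. For $u\in\WW(D_J)$ one has
\[
\la^0(u\sigma^{j_0}.\chi^0-\chi^0)=\la^0(u.\widetilde{\chi}^0-\widetilde{\chi}^0)-2\la_{j_0}d_{j_0},
\qquad \widetilde{\chi}^0:=\sigma^{j_0}.\chi^0,
\]
so it suffices to show that $(J,\la^0,\widetilde{\chi}^0)$ also satisfies the PEC for $\WW(D_J)$. But $\widetilde{\chi}^0-\chi^0$ is supported on $\{j_0\}$, hence lies in $\ell^1(J)$, and since $\la^0$ is finite (so $|\la_j|\leq L$ for some $L$), one has $|\la^0(u.(\widetilde{\chi}^0-\chi^0))|\leq L\|\widetilde{\chi}^0-\chi^0\|_1$ for every $u$; this is exactly the $\ell^1$-invariance used in Lemma~\ref{lemma:invariance_summable_aff2} (see \cite[Lemma~5.4]{PEClocfin}). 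This closes the gap without needing $D(J)$ bounded --- the latter is what makes the variable-$j_0$ argument succeed in the affine setting (Proposition~\ref{prop:other_types}), where boundedness is supplied by Proposition~\ref{prop:necessary_A_aff}(1).
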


\section{Reduction steps}
For the rest of this paper, $\la=(\la_c,\la^{0},\la_d)$ and $\chi=(\chi_c,\chi^{0},\chi_d)$ will always denote elements of $\RR\times\RR^J\times\RR$ as in \S\ref{subsection:PEC}. 
In order to characterise the PEC for $(J,\la,\chi)$, we will need to make some finiteness assumption on $\la$.
\begin{definition}
We call $\la$ \emph{$\ZZ$-discrete} if $\la_c\neq 0$ and if the set of cosets $\big\{\tfrac{\la_j}{\la_c}+\ZZ \ | \ j\in J\big\}$ is finite. 
\end{definition}
A more illuminating formulation of this $\ZZ$-discreteness assumption will be given in \S\ref{subsection:TI} below (see Remark~\ref{remark:Z-discrete_alt}). Note that the case $\la_c= 0$ can be easily dealt with (see \S\ref{section:trivial_cases} below). On the other hand, if we want $\la$ to correspond to some highest weight of an integrable highest weight module as in Theorem~\ref{thmintro:A}, then we need $\la$ to be \emph{integral} with respect to $\widehat{\Delta}$, in the sense that $\la((\alpha,n)^{\vee})\in\ZZ$ for all $(\alpha,n)\in\widehat{\Delta}$. The following lemma then shows that, for representation theoretic purposes, we may safely assume $\la$ to be $\ZZ$-discrete.

\begin{lemma}\label{lemma:integral-Zdiscrete}
Assume that $\la_c\neq 0$ and that $\la$ is integral with respect to $\widehat{\Delta}$. Then $\la$ is $\ZZ$-discrete. 
\end{lemma}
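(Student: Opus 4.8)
The plan is to unwind the definition of integrality in terms of the explicit coroot formula and see directly that it forces only finitely many residues modulo $\ZZ$ among the $\la_j/\la_c$. Recall from \S\ref{section:preliminaries:affine} that the coroot of $(\alpha,n)\in\widehat{\Delta}$ is $(\alpha,n)^{\vee}=\big(\tfrac{-2n}{(\alpha,\alpha)},\check{\alpha},0\big)$, so that, using $\la(z,h,t)=\la_cz+\la^0(h)+\la_dt$, integrality of $\la$ says precisely that
\begin{equation*}
\la((\alpha,n)^{\vee})=\frac{-2n}{(\alpha,\alpha)}\la_c+\la^0(\check{\alpha})\in\ZZ\quad\textrm{for all $(\alpha,n)\in\widehat{\Delta}$.}
\end{equation*}
First I would specialise to $n=0$, which is allowed since $(\alpha,0)\in\widehat{\Delta}$ for every $\alpha\in\Delta_0$ (and $\Delta_0$ is of type $B_J$ in the worst case, $X_J$ otherwise); this gives $\la^0(\check\alpha)\in\ZZ$ for all $\alpha\in\Delta_0$. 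Then I would run through the short list of possible types $X_J\in\{A_J,B_J,C_J,D_J\}$ for $\Delta_0$ and read off, from the explicit description of the roots $\epsilon_j-\epsilon_k$, $\epsilon_j\pm\epsilon_k$, $\pm\epsilon_j$, $\pm 2\epsilon_j$ and their coroots $\check\alpha$, what these integrality conditions on $\la^0=(\la_j)_{j\in J}$ amount to.

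The key computation is then completely elementary: for a long root of the form $\epsilon_j-\epsilon_k$ or $\epsilon_j\pm\epsilon_k$ one has $(\alpha,\alpha)=2$ and $\check\alpha=e_j\mp e_k$, so the condition becomes $\la_j-\la_k\in\ZZ$ or $\la_j+\la_k\in\ZZ$; for a short root $\pm\epsilon_j$ (types $B_J$, $BC_J$) one has $\check\alpha=2e_j$, giving $2\la_j\in\ZZ$; and for $\pm 2\epsilon_j$ (type $C_J$) one has $\check\alpha=e_j$, giving $\la_j\in\ZZ$. In each case these relations pin down the residues $\la_j+\ZZ$ very tightly: in type $A_J$ all differences $\la_j-\la_k$ lie in $\ZZ$, so there is a single residue class and $\{\la_j/\la_c+\ZZ\}$ is \dots well, here one must be slightly careful, since $\ZZ$-discreteness is about $\la_j/\la_c$ and not $\la_j$. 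So the second step is to divide through by $\la_c$: I would rewrite the $n=0$ conditions, combined now with suitable $n\neq 0$ conditions, to land on statements about $\la_j/\la_c$ modulo $\ZZ$.

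Concretely, the role of the $n\neq 0$ part of $\widehat\Delta$ is exactly to bring in $\la_c$: taking $\alpha$ long and $n=1$ (legal in types $A_J^{(1)},B_J^{(1)},C_J^{(1)},D_J^{(1)},BC_J^{(2)}$, and $n=2$ in the remaining cases) the condition $\tfrac{-2n}{(\alpha,\alpha)}\la_c+\la^0(\check\alpha)\in\ZZ$ together with $\la^0(\check\alpha)\in\ZZ$ forces $n\la_c\in\ZZ$, i.e. $\la_c\in\tfrac1n\ZZ\subseteq\QQ$; similarly the short/double-root conditions constrain $\la_c$. Once $\la_c\in\QQ^{\times}$ is known, write $\la_c=p/q$ in lowest terms; then from $2\la_j\in\ZZ$ (or $\la_j\in\ZZ$, etc.) we get $\la_j\in\tfrac12\ZZ$, hence $\la_j/\la_c=q\la_j/p\in\tfrac{q}{2p}\ZZ$, and the residues $\la_j/\la_c+\ZZ$ all lie in the finite set $\tfrac{1}{2p}\ZZ/\ZZ$ (of size $2|p|$); in type $A_J$, where only differences are controlled, one instead argues that all $\la_j-\la_{j_0}\in\ZZ$ for a fixed $j_0$, so all $\la_j/\la_c-\la_{j_0}/\la_c\in\tfrac1{|p|}\cdot\tfrac1q\ZZ\cdot$\dots — in any event a single coset of a fixed finite subgroup — so again only finitely many residues occur. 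This finishes the proof: $\la_c\neq 0$ by hypothesis, and $\{\la_j/\la_c+\ZZ:j\in J\}$ is finite.

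The main obstacle, such as it is, is purely bookkeeping: one has to handle the twisted types $B_J^{(2)}$, $C_J^{(2)}$, $BC_J^{(2)}$ (where the admissible pairs $(\alpha,n)$ depend on the parity of $n$ and on whether $\alpha$ is a short, long, or double root, per the explicit list in \S\ref{section:preliminaries:affine}), and the degenerate $\Delta_0\subsetneq\Delta$ phenomenon for type $BC_J^{(2)}$ (where $\Delta_0$ is of type $B_J$). I would organise this as a short case analysis over the seven types, in each case exhibiting (i) enough $n=0$ roots to bound $\{\la_j+\ZZ\}$ or $\{\la_j-\la_{j_0}+\ZZ\}$ inside $\tfrac12\ZZ/\ZZ$, and (ii) one root with $n\neq 0$ to force $\la_c\in\QQ^{\times}$, and then invoke the one-line arithmetic above. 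None of the individual verifications is deep; the lemma is a finiteness statement that falls out immediately once the coroot formula is made explicit.
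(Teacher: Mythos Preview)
Your approach is correct, but it works harder than necessary. The paper's proof avoids the case analysis entirely by observing that the roots $\epsilon_i-\epsilon_j$ lie in $\Delta_0$ for \emph{every} type, and that $(\epsilon_i-\epsilon_j,n)\in\widehat{\Delta}$ for infinitely many $n\in\ZZ$ in every type. From two distinct such $n$ one immediately gets $\la_c\in\QQ$, and then from a single such $n$ one gets $\tfrac{\la_i}{\la_c}-\tfrac{\la_j}{\la_c}\in\ZZ+\tfrac{1}{\la_c}\ZZ\subseteq\tfrac{1}{m}\ZZ$ (writing $\la_c=m/p$), so all residues $\tfrac{\la_j}{\la_c}+\ZZ$ lie in a single coset of the finite group $\tfrac{1}{m}\ZZ/\ZZ$. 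This is the whole proof: three lines, no type-by-type bookkeeping.

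Your route instead exploits the short roots $\pm\epsilon_j$ (or the double roots $\pm 2\epsilon_j$) to pin down each $\la_j$ individually in $\tfrac{1}{2}\ZZ$, which gives a stronger intermediate conclusion in types $B,C,BC$ but forces you into a separate argument for $A_J$ and $D_J$ where no such roots exist. The trailing computation for type $A_J$ in your write-up is left unfinished; it does go through (fix $j_0$, use $\la_j-\la_{j_0}\in\ZZ$, divide by $\la_c$), but this is exactly the paper's uniform argument, so you may as well have used it from the start. Minor point: for $\alpha=\epsilon_j\pm\epsilon_k$ the coroot is $e_j\pm e_k$ (same sign), not $e_j\mp e_k$ as you wrote; this does not affect the conclusion.
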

\begin{proof}
Let $i,j\in J$ with $i\neq j$, and for each $n\in\ZZ$, set $\gamma_n:=(0,\epsilon_i-\epsilon_j,n)\in \widehat{V}^{\star}_{\fin}$. Then $\gamma_n\in\widehat{\Delta}$ for infinitely many values of $n\in\ZZ$. Since $\gamma^{\vee}_n=(-n,e_i-e_j,0)$, the integrality condition on $\la$ implies that 
$$\la(\gamma^{\vee}_n)=-n\la_c+\la_i-\la_j\in\ZZ$$
for at least two distinct values of $n\in\ZZ$, so that $\la_c\in\QQ$. Write $\la_c=m/p$ for some nonzero $m,p\in\ZZ$. Then
$$\tfrac{\la_i}{\la_c}-\tfrac{\la_j}{\la_c}\in \ZZ+\tfrac{1}{\la_c}\ZZ
\subseteq \ZZ +\tfrac{1}{m}\ZZ  \subseteq \tfrac{1}{m}\ZZ 
\quad\textrm{for all $i,j\in J$.}$$
Fixing some $i_0\in J$, we deduce that 
$$\tfrac{\la_j}{\la_c}+\ZZ\in\big\{\tfrac{\la_{i_0}}{\la_c}+\tfrac{s}{m}+\ZZ \ \big| \ s=0,1,\dots,m-1\big\}\quad\textrm{for all $j\in J$.}$$
Hence the set of cosets 
$\big\{\tfrac{\la_j}{\la_c}+\ZZ \ \big| \ j\in J\big\}$
is finite, so that $\la$ is $\ZZ$-discrete, as desired.
\end{proof}

We begin our study of the PEC for the triple $(J,\la,\chi)$ by some reduction steps.

\subsection{\texorpdfstring{The case $\la_c\chi_d=0$}{The degenerate case}}\label{section:trivial_cases}
%\subsection{The case $\la_c\chi_d=0$}\label{section:trivial_cases}
In this paragraph, we investigate the PEC for the triple $(J,\la,\chi)$ in case $\la_c\chi_d=0$.

\begin{lemma}\label{lemma:trivial_case00}
Assume that $\la_c=\chi_d=0$. Then $(J,\la,\chi)$ satisfies the PEC for $\widehat{\WW}$ if and only if $(J,\la^{0},\chi^{0})$ satisfies the PEC for $\WW$.
\end{lemma}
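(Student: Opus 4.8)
The statement to prove is Lemma~\ref{lemma:trivial_case00}: when $\la_c = \chi_d = 0$, the triple $(J,\la,\chi)$ satisfies the PEC for $\widehat{\WW}$ if and only if $(J,\la^0,\chi^0)$ satisfies the PEC for $\WW$. My plan is to simply specialise the master formula \eqref{eqn:first_eqn} to this degenerate case and read off the answer. Recall that any $\widehat{w}\in\widehat{\WW}$ can be written $\widehat{w}=\tau_x\sigma w\inv$ with $x=\sum_j n_j e_j\in\TTT$, $\sigma\in\{\pm1\}^{(J)}$, $w\in S_{(J)}$, and that \eqref{eqn:first_eqn} gives
\[
\la(\widehat{w}.\chi-\chi)=\frac{\la_c\chi_d}{2}\sum_{j}n_j^2+\la_c\sum_j n_j\sigma_j d_{w(j)}+\chi_d\sum_j n_j\la_j+\sum_j\la_j(\sigma_j d_{w(j)}-d_j).
\]
Setting $\la_c=\chi_d=0$ kills the first three terms, leaving $\la(\widehat{w}.\chi-\chi)=\sum_{j\in J}\la_j(\sigma_j d_{w(j)}-d_j)$, which is exactly the expression $\la^0(\sigma w\inv.\chi^0-\chi^0)$ for the action of the element $\sigma w\inv\in\WW$ (using the description of $\WW=\WW(X_J)$ from \S\ref{subsection:Weyl_group} and the fact that, in the $A$ case, the $\sigma$-part is trivial so this is still the right element of $\WW$).

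The key observation making the argument go through cleanly is that the value $\la(\widehat{w}.\chi-\chi)$ no longer depends on the translation part $\tau_x$: as $\widehat{w}$ ranges over $\widehat{\WW}$, the pair $(\sigma,w)$ — and hence $\sigma w\inv$ — ranges over all of $\WW$, so
\[
\{\la(\widehat{w}.\chi-\chi)\ |\ \widehat{w}\in\widehat{\WW}\}=\{\la^0(v.\chi^0-\chi^0)\ |\ v\in\WW\}.
\]
Therefore the left-hand set is bounded below if and only if the right-hand set is, which is precisely the equivalence claimed. One small point to verify is that the pair $(\sigma,w)$ appearing in the decomposition $\widehat{w}=\tau_x\sigma w\inv$ does indeed surject onto $\WW$ in each type (not just that it lies in $\WW$); this follows from the semidirect decomposition $\widehat{\WW}=\tau(\TTT)\rtimes\WW$ recalled in \S\ref{subsection:Weyl_group}, since we may take $x=0$.

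**Main obstacle.** There is essentially no obstacle here — this is a bookkeeping lemma isolating a genuinely degenerate case, and the only thing requiring a moment's care is bookkeeping with the three different parametrisations ($\WW(A_J)=S_{(J)}$ has no sign part, while $\WW(B_J)$ etc.\ do), to be sure that ``$\sigma w\inv$ ranges over $\WW$'' is stated correctly in each type; in type $A$ one simply has $\sigma=1$ and $w\inv$ ranges over $S_{(J)}=\WW(A_J)$. Since the identification of the notions of PEC for $(J,\la,\chi)$ with the representation-theoretic positive energy condition, and of PEC for $(J,\la^0,\chi^0)$ with the one from \cite{PEClocfin}, are already in place (Remark~\ref{remark:lachi_chila} and \S\ref{subsection:PEC}, respectively), no further translation is needed and the proof is a two-line computation from \eqref{eqn:first_eqn}.
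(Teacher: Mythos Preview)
Your proposal is correct and follows exactly the paper's own approach: the paper's proof simply says ``This readily follows from (\ref{eqn:first_eqn})'', and you have spelled out precisely how. Your additional care in noting that the $\WW$-component surjects (via $x=0$ in the semidirect decomposition $\widehat{\WW}=\tau(\TTT)\rtimes\WW$) is the only nontrivial point, and you handle it correctly.
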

\begin{proof}
This readily follows from (\ref{eqn:first_eqn}) in Section~\ref{subsection:PEC}.
\end{proof}

\begin{lemma}
Assume that $\la_c=0$ but $\chi_d\neq 0$. Then $(J,\la,\chi)$ satisfies the PEC for $\widehat{\WW}$ if and only if one of the following holds:
\begin{enumerate}
\item
$\widehat{\WW}=\widehat{\WW}(A_J^{(1)})$ and $\la^{0}$ is constant.
\item
$\widehat{\WW}\neq\widehat{\WW}(A_J^{(1)})$ and $\la^{0}=0$.
\end{enumerate}
\end{lemma}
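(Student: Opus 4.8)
The plan is to start from formula \eqref{eqn:first_eqn}, which with $\la_c=0$ simplifies dramatically. When $\la_c=0$ but $\chi_d\neq0$, writing $\widehat w=\tau_x\sigma w\inv$ with $x=\sum_j n_je_j\in\TTT$, formula \eqref{eqn:first_eqn} collapses to
\begin{equation*}
\la(\widehat w.\chi-\chi)=\chi_d\sum_{j\in J}n_j\la_j+\sum_{j\in J}\la_j(\sigma_jd_{w(j)}-d_j).
\end{equation*}
The second sum ranges over a set of finitely supported perturbations that is independent of the translation part $x$, hence is bounded (it only involves finitely many $d_j$'s at a time but ranges over permutations and sign changes of a \emph{fixed} finite set of indices determined by the support of $\sigma w$; more precisely, for $w,\sigma$ with support contained in a finite $F$, the quantity $\sum_{j\in F}\la_j(\sigma_jd_{w(j)}-d_j)$ is bounded in absolute value by $2\sum_{j\in F}|\la_j|\cdot\max_{k\in F}|d_k|$, and one argues these stay controlled). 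So the PEC is governed by whether $\chi_d\sum_j n_j\la_j$ is bounded below as $x=\sum_j n_je_j$ ranges over $\TTT$. Since $\chi_d\neq0$, dividing by $|\chi_d|$, this is equivalent to asking whether the linear functional $x\mapsto \sum_j n_j\la_j=\la^0(x)$ (up to sign) is bounded below on the lattice $\TTT\subseteq Q=\ZZ^{(J)}$.

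The key step is then a lattice computation: a nonzero linear functional is bounded below on a subgroup $\TTT$ of $\ZZ^{(J)}$ if and only if it vanishes identically on $\TTT$, because if $\la^0(y)\neq0$ for some $y\in\TTT$ then $\la^0(ny)\to-\infty$ along $n\to\pm\infty$. So PEC holds iff $\la^0|_{\TTT}=0$, i.e. $\sum_j n_j\la_j=0$ for all $\sum_j n_je_j\in\TTT$. Now I would plug in the explicit description of $\TTT=\TTT(X)$ from \S\ref{subsection:Weyl_group}. For $X\neq A_J^{(1)}$, the lattice $\TTT$ contains $2e_j$ for every $j\in J$ (this holds for $B_J^{(1)}$, $D_J^{(1)}$, $C_J^{(2)}$, $B_J^{(2)}$, $C_J^{(1)}$, $BC_J^{(2)}$ — one checks each: in all these cases $2e_j\in\TTT$), so $\la^0|_{\TTT}=0$ forces $2\la_j=0$, i.e. $\la_j=0$ for all $j$, i.e. $\la^0=0$; conversely $\la^0=0$ trivially gives $\la^0|_\TTT=0$. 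For $X=A_J^{(1)}$, $\TTT(A_J^{(1)})=\{\sum_j n_je_j\in Q \mid \sum_j n_j=0\}$ is spanned by the differences $e_i-e_j$, so $\la^0|_\TTT=0$ iff $\la_i=\la_j$ for all $i,j\in J$, i.e. iff $\la^0$ is constant; conversely if $\la^0$ is constant equal to $c$ then $\la^0(x)=c\sum_j n_j=0$ on $\TTT(A_J^{(1)})$.

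The main obstacle is the careful bookkeeping showing that the non-translation terms $\sum_{j}\la_j(\sigma_jd_{w(j)}-d_j)$ do not interfere with the argument — one must confirm that boundedness below of the full expression over \emph{all} of $\widehat\WW$ is equivalent to boundedness below of the translation term over $\TTT$. This is genuine but routine: fixing $\sigma=1$, $w=\mathrm{id}$ reduces to the translation term alone (giving the "only if" direction: PEC for $\widehat\WW$ $\implies$ $\la^0|_\TTT=0$ in the non-$A$ case, or $\la^0$ constant in the $A$ case); for the converse, once $\la^0|_\TTT=0$ (resp. $\la^0$ constant) the translation term vanishes entirely and one is left with $\sum_j\la_j(\sigma_jd_{w(j)}-d_j)$, which I must then show is bounded below. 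In the constant case $\la^0\equiv c$ this is $c\sum_j(\sigma_jd_{w(j)}-d_j)$; here a subtlety arises since $\sigma\in\{\pm1\}^{(J)}$ can be arbitrary, making $\sum_j\sigma_jd_{w(j)}$ potentially unbounded below if $c\neq0$ — so for $A_J^{(1)}$ one must recall that $\widehat\WW(A_J^{(1)})$ has \emph{trivial} sign part ($\WW(A_J)=S_{(J)}$ only), so $\sigma=1$ always and the term is $c\sum_j(d_{w(j)}-d_j)=0$. In the $\la^0=0$ case for other types the term is identically $0$. Either way PEC holds, completing the equivalence. I would close by noting both cases are subsumed in the statement as written.

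\begin{proof}
Assume $\la_c=0$ and $\chi_d\neq0$. Write an arbitrary element of $\widehat\WW$ as $\widehat w=\tau_x\sigma w\inv$ with $x=\sum_{j\in J}n_je_j\in\TTT$, $\sigma=(\sigma_j)_{j\in J}\in\{\pm1\}^{(J)}$ and $w\in S_{(J)}$ (see \S\ref{subsection:Weyl_group}; recall that for $\widehat\WW=\widehat\WW(A_J^{(1)})$ one has $\sigma=1$ necessarily). Setting $\la_c=0$ in \eqref{eqn:first_eqn} yields
\begin{equation}\label{eqn:lac0}
\la(\widehat w.\chi-\chi)=\chi_d\sum_{j\in J}n_j\la_j+\sum_{j\in J}\la_j(\sigma_jd_{w(j)}-d_j).
\end{equation}

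Suppose first that $(J,\la,\chi)$ satisfies the PEC for $\widehat\WW$. Taking $\sigma=1$ and $w=\id$ in \eqref{eqn:lac0}, we see that $\chi_d\sum_{j\in J}n_j\la_j=\chi_d\la^0(x)$ is bounded below as $x$ ranges over $\TTT$. Since $\chi_d\neq0$ and $\TTT$ is a subgroup of $V_{\fin}$, this forces $\la^0(x)=0$ for all $x\in\TTT$: indeed, if $\la^0(y)\neq0$ for some $y\in\TTT$ then $\la^0(ny)=n\la^0(y)$ is unbounded below along $n\in\ZZ$. Now we use the description of $\TTT=\TTT(X)$ from \S\ref{subsection:Weyl_group}. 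If $\widehat\WW=\widehat\WW(A_J^{(1)})$, then $\TTT$ is generated by $\{e_i-e_j \ | \ i,j\in J\}$, so $\la^0|_\TTT=0$ gives $\la_i-\la_j=\la^0(e_i-e_j)=0$ for all $i,j\in J$, i.e. $\la^0$ is constant. If $\widehat\WW\neq\widehat\WW(A_J^{(1)})$, then $2e_j\in\TTT$ for every $j\in J$ (as is visible from each of the lattices $\TTT(B^{(1)}_J)$, $\TTT(D^{(1)}_J)$, $\TTT(C^{(2)}_J)$, $\TTT(B_J^{(2)})$, $\TTT(C^{(1)}_J)$ and $\TTT(BC^{(2)}_J)$), so $\la^0|_\TTT=0$ gives $2\la_j=\la^0(2e_j)=0$, i.e. $\la^0=0$. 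This proves that the PEC implies (1) or (2).

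Conversely, suppose that (1) or (2) holds. In case (2), $\la^0=0$ and \eqref{eqn:lac0} gives $\la(\widehat w.\chi-\chi)=0$ for all $\widehat w\in\widehat\WW$, so the PEC holds trivially. In case (1), $\widehat\WW=\widehat\WW(A_J^{(1)})$ and $\la^0=c\sum_{j\in J}\epsilon_j$ for some $c\in\RR$ (in the sense that $\la_j=c$ for all $j$); moreover $\sigma=1$ for every $\widehat w\in\widehat\WW(A_J^{(1)})$, and $x=\sum_{j\in J}n_je_j\in\TTT(A_J^{(1)})$ satisfies $\sum_{j\in J}n_j=0$. Hence \eqref{eqn:lac0} becomes
\begin{equation*}
\la(\widehat w.\chi-\chi)=\chi_d\cdot c\sum_{j\in J}n_j+c\sum_{j\in J}(d_{w(j)}-d_j)=0,
\end{equation*}
using $\sum_{j\in J}n_j=0$ and the fact that $w$ permutes $J$ with finite support so that $\sum_{j\in J}(d_{w(j)}-d_j)=0$. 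Thus the PEC holds in this case as well, completing the proof.
\end{proof}
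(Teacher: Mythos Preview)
Your proof is correct and follows exactly the same approach as the paper's: both set $\sigma=w=1$ in \eqref{eqn:first_eqn} to reduce the PEC to boundedness below of $\chi_d\la^0(x)$ over $x\in\TTT$, and then read off the conclusion from the explicit description of $\TTT$. The paper's proof is simply a terse two-sentence sketch of what you have written out in full, including the converse direction, which the paper dismisses as ``an easy consequence of \eqref{eqn:first_eqn}''.
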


\begin{proof}
Assume that $(J,\la,\chi)$ satisfies the PEC for $\widehat{\WW}$, and set 
$\sigma=w=1$ in (\ref{eqn:first_eqn}). Then 
$$\Big\{ \chi_d\sum_{j\in J}{n_j\la_j} \ \big| \ \sum_{j\in J}{n_je_j}\in\TTT \Big\}$$
must be bounded from below. This is easily seen to imply (1) or (2), depending on the type of $\widehat{\WW}$. The converse is an easy consequence of (\ref{eqn:first_eqn}).
\end{proof}

\begin{lemma}
Assume that $\chi_d=0$ but $\la_c\neq 0$. Then $(J,\la,\chi)$ satisfies the PEC for $\widehat{\WW}$ if and only if one of the following holds:
\begin{enumerate}
\item
$\widehat{\WW}=\widehat{\WW}(A_J^{(1)})$ and $\chi^{0}$ is constant.
\item
$\widehat{\WW}\neq\widehat{\WW}(A_J^{(1)})$ and $\chi^{0}=0$.
\end{enumerate}
\end{lemma}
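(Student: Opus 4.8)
The plan is to exploit formula (\ref{eqn:first_eqn}) exactly as in the two preceding lemmas. With $\chi_d=0$, the quadratic term and the term $\chi_d\la^0(x)$ vanish, so for $\widehat{w}=\tau_x\sigma w\inv$ with $x=\sum_j n_je_j\in\TTT$ the expression reduces to
$$\la(\widehat{w}.\chi-\chi)=\la_c\sum_{j\in J}n_j\sigma_jd_{w(j)}+\sum_{j\in J}\la_j(\sigma_jd_{w(j)}-d_j).$$
The first observation is that the summand $\sum_j\la_j(\sigma_jd_{w(j)}-d_j)$ is the contribution coming from the finite Weyl group piece $\sigma w\inv\in\WW$, hence is bounded below by $\inf\la^0(\WW.\chi^0-\chi^0)$ once we know $(J,\la^0,\chi^0)$ itself satisfies the PEC for $\WW$; and indeed if $(J,\la,\chi)$ satisfies the PEC for $\widehat{\WW}$ then setting $x=0$ shows $(J,\la^0,\chi^0)$ satisfies the PEC for $\WW$. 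So the genuinely new phenomenon is the term $\la_c\sum_j n_j\sigma_jd_{w(j)}$: as $x$ ranges over the lattice $\TTT$, we may make $\sum_j n_j\sigma_jd_{w(j)}$ arbitrarily negative unless $\chi^0$ is severely constrained. Concretely, fixing $w=\id$ and $\sigma=\id$ and letting $x=n(e_i-e_j)$ for a pair $i\neq j$ (which lies in $\TTT$ for every type, since $e_i-e_j=\check\alpha$ for $\alpha=\epsilon_i-\epsilon_j\in\Delta_0$), one gets $\la_cn(d_i-d_j)$, which is bounded below over $n\in\ZZ$ only if $d_i=d_j$. This forces $\chi^0$ to be constant in every case.

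The second step is to separate the two cases by whether the constant value may be nonzero. If $\widehat{\WW}=\widehat{\WW}(A_J^{(1)})$, then $\TTT(A_J^{(1)})=\{\sum n_je_j\in Q : \sum n_j=0\}$, so $\sum_j n_jd_j=c\sum_j n_j=0$ whenever $\chi^0\equiv c$; hence the $x$-dependent term vanishes identically and $(J,\la,\chi)$ satisfies the PEC iff $(J,\la^0,\chi^0)$ does. But a constant tuple automatically lies in $C_{\min}(\la^0,A_J)$ (the defining implication $\la_i<\la_j\implies d_i\ge d_j$ is vacuously satisfied by equal $d$'s), so by Proposition~\ref{prop:minimality} the finite PEC holds, and the condition is exactly "$\chi^0$ constant", giving (1). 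If $\widehat{\WW}\neq\widehat{\WW}(A_J^{(1)})$, then for every remaining type the lattice $\TTT$ contains $2e_j$ for each $j$ (inspect the list: $\TTT(B^{(1)}),\TTT(D^{(1)}),\TTT(C^{(2)})$ all contain $2e_j$ since $\sum n_j=2\in2\ZZ$; $\TTT(B^{(2)})$ contains $2e_j$ by definition; $\TTT(C^{(1)})=\TTT(BC^{(2)})=Q$ even contains $e_j$). Taking $x=2ne_j$ with $\sigma=w=\id$ yields $2\la_cn\,d_j$, bounded below over $n\in\ZZ$ only if $d_j=0$; since $\chi^0$ is already constant this forces $\chi^0=0$. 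Conversely $\chi^0=0$ makes $\la(\widehat{w}.\chi-\chi)\equiv0$ by (\ref{eqn:first_eqn}), so the PEC trivially holds, giving (2).

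I do not anticipate a serious obstacle here: the argument is a direct translation of the method used in the two lemmas immediately above, the only point requiring a moment's care being the case-by-case verification that the translation lattice $\TTT(X)$ in each non-$A^{(1)}$ type contains an element supported on a single $e_j$ (which is immediate from the explicit description of $\TTT(X)$ recalled in \S\ref{subsection:Weyl_group}), and the observation that in type $A_J^{(1)}$ a constant $\chi^0$ is killed by the codimension-one lattice. Thus the mildly delicate part is simply keeping track of which types allow $d_j=0$ to be deduced from $d_j=d_i$ for all $i$, versus type $A$ where only constancy survives.
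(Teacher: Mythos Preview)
Your proposal is correct and follows essentially the same approach as the paper: set $\sigma=w=1$ in (\ref{eqn:first_eqn}), observe that the resulting set $\{\la_c\sum_j n_jd_j : x\in\TTT\}$ must be bounded below, and read off the constraint on $\chi^0$ from the explicit description of $\TTT(X)$ for each type; the converse is immediate from (\ref{eqn:first_eqn}). Your write-up is simply more detailed than the paper's terse ``easily seen'' --- one minor caveat: the justification ``$e_i-e_j=\check\alpha$ for $\alpha\in\Delta_0$'' does not by itself place $e_i-e_j$ in $\TTT$ (indeed $e_i-e_j\notin\TTT(B_J^{(2)})$), but this is harmless since your argument for the non-$A^{(1)}$ types uses $2e_j$ anyway.
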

\begin{proof}
Assume that $(J,\la,\chi)$ satisfies the PEC for $\widehat{\WW}$, and set $\sigma=w=1$ in (\ref{eqn:first_eqn}). Then 
$$\Big\{ \la_c\sum_{j\in J}{n_jd_{j}} \ \big| \ \sum_{j\in J}{n_je_j}\in\TTT\Big\}$$
must be bounded from below. This is easily seen to imply (1) or (2), depending on the type of $\widehat{\WW}$. The converse is an easy consequence of (\ref{eqn:first_eqn}).
\end{proof}

\subsection{\texorpdfstring{The case $\la_c\chi_d\neq 0$}{The non-degenerate case}}\label{subsection:nontrivial_case}
%\subsection{The case $\la_c\chi_d\neq 0$}\label{subsection:nontrivial_case}
In view of \S\ref{section:trivial_cases}, we may now assume that $\la_c\chi_d\neq 0$. 
We begin with two simple observations.
\begin{lemma}\label{lemma:SC0}
Assume that $(J,\la,\chi)$ satisfies the PEC for $\widehat{\WW}$ and that $\la_c\chi_d\neq 0$. Then $\la_c\chi_d>0$.
\end{lemma}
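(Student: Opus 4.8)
The plan is to prove the contrapositive-friendly statement directly: assuming $(J,\la,\chi)$ satisfies the PEC for $\widehat{\WW}$ and $\la_c\chi_d \neq 0$, I derive $\la_c\chi_d > 0$. The key is formula \eqref{eqn:first_eqn}, which expresses $\la(\widehat{w}.\chi-\chi)$ for $\widehat{w}=\tau_x\sigma w\inv$ as
\begin{equation*}
\frac{\la_c\chi_d}{2}\sum_{j\in J}{n_j^2}+\la_c\sum_{j\in J}{n_j\sigma_jd_{w(j)}}+\chi_d\sum_{j\in J}{n_j\la_j}+\sum_{j\in J}{\la_j(\sigma_jd_{w(j)}-d_j)}.
\end{equation*}
The strategy is to choose a one-parameter family of Weyl group elements along which the quadratic term $\tfrac{\la_c\chi_d}{2}\sum n_j^2$ dominates, so that if $\la_c\chi_d<0$ the expression tends to $-\infty$, contradicting the PEC.

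**Key steps.** First I would fix a single index $j_0\in J$ (or a pair, depending on what $\TTT$ allows) and set $\sigma=w=1$, so the last term vanishes and the expression reduces to $\tfrac{\la_c\chi_d}{2}\sum n_j^2 + \la_c\sum n_j d_j + \chi_d\sum n_j\la_j$. Then I would pick $x = m\cdot x_0$ for a fixed $x_0=\sum_j n_j^{(0)}e_j\in\TTT$ with nonempty support and let $m\to\infty$ through the integers for which $mx_0\in\TTT$ (the lattices $\TTT(X)$ are all closed under integer scaling, so $m\ZZ x_0\subseteq\TTT$ for every $m$; one just needs one nonzero vector in $\TTT$, which exists since $J$ is infinite). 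Along this family, $\sum n_j^2 = m^2\sum (n_j^{(0)})^2$ grows quadratically while the two linear-in-$m$ terms grow only linearly, so
\begin{equation*}
\la(\tau_{mx_0}.\chi-\chi) = \frac{\la_c\chi_d}{2}m^2\sum_{j}(n_j^{(0)})^2 + O(m).
\end{equation*}
If $\la_c\chi_d<0$ this quantity $\to-\infty$, contradicting the assumed lower bound. Hence $\la_c\chi_d>0$.

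**Main obstacle.** The only subtlety is ensuring $\TTT$ contains a suitable nonzero lattice vector closed under scaling — but this is immediate from the explicit descriptions of $\TTT(X)$ in \S\ref{subsection:Weyl_group}: in every one of the seven types $\TTT$ contains, for instance, $e_i-e_j$ or $2e_i$ for distinct $i,j\in J$ (and all integer multiples thereof), so a vector $x_0$ with $\sum(n_j^{(0)})^2>0$ always exists and $mx_0\in\TTT$ for all $m\in\ZZ$. Thus the argument is genuinely short: it is just the observation that the quadratic form $q\mapsto\la_c\chi_d q$ on the (infinite, hence unbounded-in-norm) translation lattice $\TTT$ must be bounded below on a ray, forcing its leading coefficient to be nonnegative, and nonzero by hypothesis. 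I would write it as one paragraph invoking \eqref{eqn:first_eqn} with $\sigma=w=1$ and $x=mx_0$.
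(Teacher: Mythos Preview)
Your proposal is correct and follows essentially the same route as the paper: the paper also sets $\sigma=w=1$, takes the specific choice $x_n=2n(e_i-e_j)\in\TTT$ (which lies in every one of the seven lattices) for a fixed pair $i\neq j$, and observes via \eqref{eqn:first_eqn} that the resulting quadratic-in-$n$ expression forces $\la_c\chi_d\geq 0$. Your only slight imprecision is the parenthetical ``$e_i-e_j$ or $2e_i$'': neither of these individually belongs to all seven lattices, whereas $2(e_i-e_j)$ does---but this does not affect the validity of your argument.
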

\begin{proof}
Fix some $i,j\in J$ with $i\neq j$, and for each $n\in\NN$, consider the element $x_n=2n(e_i-e_j)\in\TTT$. Setting $\sigma=w=1$ and $x=x_n$ in (\ref{eqn:first_eqn}), the PEC then implies that  
$$ \{4\la_c\chi_dn^2+2\la_c(d_i-d_j)n+2\chi_d(\la_i-\la_j)n \ | \ n \in \NN\} $$
must be bounded from below, yielding the claim.
\end{proof}

\begin{lemma}\label{lemma:simpleobs2}
Assume that $\la_c\chi_d>0$. Then the following are equivalent:
\begin{enumerate}
\item
$(J,\la,\chi)$ satisfies the PEC for $\widehat{\WW}$.
\item
$(J,\la_{\mathrm{st}},\chi_{\mathrm{st}}):=(J,(1,\tfrac{\la^{0}}{\la_c},0),(0,\tfrac{\chi^{0}}{\chi_d},1))$ satisfies the PEC for $\widehat{\WW}$.
\end{enumerate}
More precisely, $\inf\la\big(\widehat{\WW}.\chi-\chi\big)=\la_c\chi_d\cdot \inf\la_{\mathrm{st}}\big(\widehat{\WW}.\chi_{\mathrm{st}}-\chi_{\mathrm{st}}\big)$.
\end{lemma}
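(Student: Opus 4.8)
The strategy is to read off everything from the master formula \eqref{eqn:first_eqn}, which expresses $\la(\widehat{w}.\chi-\chi)$ for $\widehat{w}=\tau_x\sigma w\inv$ (with $x=\sum_j n_je_j\in\TTT$, $\sigma\in\{\pm1\}^{(J)}$, $w\in S_{(J)}$) as a sum of four terms, namely
$$
\tfrac{\la_c\chi_d}{2}\textstyle\sum_j n_j^2+\la_c\textstyle\sum_j n_j\sigma_jd_{w(j)}+\chi_d\textstyle\sum_j n_j\la_j+\textstyle\sum_j\la_j(\sigma_jd_{w(j)}-d_j).
$$
The key observation is that under the rescaling $\la\mapsto\la_{\mathrm{st}}=(1,\la^{0}/\la_c,0)$ and $\chi\mapsto\chi_{\mathrm{st}}=(0,\chi^{0}/\chi_d,1)$, each of the four terms is divided by exactly $\la_c\chi_d$: the first term has coefficient $\tfrac{\la_c\chi_d}{2}\to\tfrac12$ (divide by $\la_c\chi_d$); the second has $\la_c d_{w(j)}\to 1\cdot d_{w(j)}/\chi_d$ (divide by $\la_c\chi_d$); the third has $\chi_d\la_j\to 1\cdot(\la_j/\la_c)$ (divide by $\la_c\chi_d$); and the fourth has $\la_j(\sigma_jd_{w(j)}-d_j)\to(\la_j/\la_c)(\sigma_jd_{w(j)}-d_j)/\chi_d$ (again divide by $\la_c\chi_d$). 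Hence, provided the \emph{same} Weyl group element $\widehat{w}$ (equivalently, the same data $x,\sigma,w$) is used on both sides — and this is legitimate since the decomposition $\TTT\rtimes\{\pm1\}^{(J)}\rtimes S_{(J)}$ depends only on the type of $\widehat{\WW}$, not on $\la$ or $\chi$ — one gets the exact identity
$$
\la(\widehat{w}.\chi-\chi)=\la_c\chi_d\cdot\la_{\mathrm{st}}(\widehat{w}.\chi_{\mathrm{st}}-\chi_{\mathrm{st}})\qquad\textrm{for all }\widehat{w}\in\widehat{\WW}.
$$

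From this pointwise identity the rest is immediate. Taking infima over $\widehat{w}\in\widehat{\WW}$ on both sides and using $\la_c\chi_d>0$ (so that multiplication by $\la_c\chi_d$ is an order-preserving bijection of $\RR$ that commutes with $\inf$, in particular sends $-\infty$ to $-\infty$ and a finite value to a finite value), one obtains
$$
\inf\la(\widehat{\WW}.\chi-\chi)=\la_c\chi_d\cdot\inf\la_{\mathrm{st}}(\widehat{\WW}.\chi_{\mathrm{st}}-\chi_{\mathrm{st}}),
$$
which is the ``more precise'' statement. The equivalence of (1) and (2) is then just the observation that the left-hand side is $>-\infty$ iff the right-hand side is $>-\infty$, again because $\la_c\chi_d>0$.

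**Main obstacle.** There is essentially no analytic difficulty here; the one point that needs a word of care is the verification that the scaling factors line up correctly in all four terms of \eqref{eqn:first_eqn} simultaneously — in particular that $\la_{\mathrm{st}}$ has first coordinate $1$ and $\chi_{\mathrm{st}}$ has last coordinate $1$, so that the $n_j^2$-term, which carries the product $(\la_{\mathrm{st}})_c(\chi_{\mathrm{st}})_d$, rescales by $\la_c\chi_d$ rather than by $\la_c^2$ or $\chi_d^2$. One should also note explicitly that $(\la_{\mathrm{st}})_c(\chi_{\mathrm{st}})_d=1>0$, so that Lemma~\ref{lemma:SC0} is consistent and the reduction lands in the regime $\la_c\chi_d>0$ of the subsequent analysis. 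Everything else is a direct substitution into \eqref{eqn:first_eqn} followed by taking $\inf$.
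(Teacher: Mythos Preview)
Your proof is correct and follows exactly the approach intended by the paper, which simply records that the result ``readily follows from (\ref{eqn:first_eqn}).'' You have merely made explicit the term-by-term rescaling that the paper leaves to the reader.
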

\begin{proof}
This readily follows from (\ref{eqn:first_eqn}).
\end{proof}

Thus, in order to investigate the PEC for $(J,\la,\chi)$, we may safely make the following normalisation.
\begin{convention}\label{convention:normalisation}
From now on, unless otherwise stated, we assume that $\la_c=\chi_d=1$ and that $\la_d=\chi_c=0$, so that
$$\la=(1,\la^{0},0)\quad\textrm{and}\quad \chi=(0,\chi^{0},1).$$
\end{convention}

Given $\widehat{w}\in\widehat{\WW}$, which we write as $\widehat{w}=\tau_x\sigma w\inv$ for some $x=\sum_{j\in J}{n_je_j}\in\TTT$, some $w\in S_{(J)}$ and some $\sigma=(\sigma_j)_{j\in J}\in \{\pm 1\}^{(J)}$, we can now rewrite (\ref{eqn:first_eqn}) as
\begin{equation}\label{eqn:second_eqn}
\begin{aligned}
\la(\widehat{w}.\chi-\chi)&=\frac{1}{2}\sum_{j\in J}{n_j^2}+\sum_{j\in J}{n_j\sigma_jd_{w(j)}}+\sum_{j\in J}{n_j\la_j}+\sum_{j\in J}{\la_j(\sigma_jd_{w(j)}-d_j)}\\
&=\frac{1}{2}\sum_{j\in J}{\Big((n_j+\la_j+\sigma_jd_{w(j)})^2-(\la_j+d_j)^2\Big)}.
\end{aligned}
\end{equation}

\subsection{Translation invariance}\label{subsection:TI}

We introduce the following notation.
Given $x\in\RR$, we set $$[x]:=\floor{x+\tfrac{1}{2}}\quad\textrm{and}\quad \fr{x}:=x-[x]\in [-\tfrac{1}{2},\tfrac{1}{2}).$$ In other words, $[x]$ is the closest integer to $x$, with the choice $[x]=\ceil{x}$ if $x\in\frac{1}{2}+\ZZ$. Given a tuple $\mm=(m_j)_{j\in J}\in\RR^J$, we then define the tuples 
$$[\mm]:=([m_j])_{j\in J}\in\ZZ^J\subseteq\RR^J\quad\textrm{and}\quad \fr{\mm}:=(\fr{m_j})_{j\in J}\in [-\tfrac{1}{2},\tfrac{1}{2})^J\subseteq\RR^J.$$
We also set 
$$\la_{\mm}:=(1,\la^{0}-\mm,0)\quad\textrm{and}\quad \chi_{\mm}:=(0,\chi^{0}+\mm,1).$$

\begin{lemma}\label{lemma:translation_invariance1}
Let $\mm=(m_j)_{j\in J}\in\ZZ^J$. If $\widehat{\Delta}$ is of type $B^{(2)}_J$, we moreover assume that $m_j\in 2\ZZ$ for all $j\in J$. Then $\inf\la\big(\widehat{\WW}.\chi-\chi\big)=\inf\la_{\mm}\big(\widehat{\WW}.\chi_{\mm}-\chi_{\mm}\big)$. In particular, the following are equivalent:
\begin{enumerate}
\item
$(J,\la,\chi)$ satisfies the PEC for $\widehat{\WW}$.
\item
$(J,\la_{\mm},\chi_{\mm})$ satisfies the PEC for $\widehat{\WW}$.
\end{enumerate}
\end{lemma}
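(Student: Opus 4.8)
The plan is to prove this by exhibiting an explicit change of variables on the Weyl group that relates $\la(\widehat{w}.\chi-\chi)$ to $\la_{\mm}(\widehat{w}'.\chi_{\mm}-\chi_{\mm})$ for a suitably modified element $\widehat{w}'\in\widehat{\WW}$. The key tool is the expression \eqref{eqn:second_eqn}, which under Convention~\ref{convention:normalisation} writes, for $\widehat{w}=\tau_x\sigma w\inv$ with $x=\sum_j n_je_j\in\TTT$,
$$\la(\widehat{w}.\chi-\chi)=\frac12\sum_{j\in J}\Big((n_j+\la_j+\sigma_jd_{w(j)})^2-(\la_j+d_j)^2\Big).$$
Since $\la^0-\mm$ has $j$-entry $\la_j-m_j$ and $\chi^0+\mm$ has $j$-entry $d_j+m_j$, the corresponding quantity for $(\la_{\mm},\chi_{\mm})$ and an element $\widehat{w}'=\tau_{x'}\sigma' w'^{-1}$ with $x'=\sum_j n'_je_j$ is
$$\la_{\mm}(\widehat{w}'.\chi_{\mm}-\chi_{\mm})=\frac12\sum_{j\in J}\Big((n'_j+\la_j-m_j+\sigma'_j(d_{w'(j)}+m_{w'(j)}))^2-(\la_j+d_j)^2\Big).$$
Note the constant term $-\tfrac12\sum_j(\la_j+d_j)^2$ is the same in both, which is what makes the infima comparable.

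First I would establish the correspondence in the easier direction: the sign part $\sigma$ only matters up to the sign it assigns, and the permutation $w$ reindexes the $d$-coefficients. Given $\widehat{w}=\tau_x\sigma w\inv$, I would keep the same $\sigma$ and $w$ and modify only the translation part, setting $n'_j := n_j + m_j - \sigma_j m_{w(j)}$. The point is that $(n_j+\la_j+\sigma_jd_{w(j)})$ becomes $(n'_j+(\la_j-m_j)+\sigma_j(d_{w(j)}+m_{w(j)}))$ exactly, so each summand is preserved term-by-term, hence the total sums agree. It remains to check that $x':=\sum_j n'_je_j$ lies in $\TTT$: since $m\in\ZZ^J$ (with all entries even in the $B^{(2)}_J$ case by hypothesis), the correction $m_j-\sigma_j m_{w(j)}$ lies in $\ZZ$ (resp.\ $2\ZZ$), and one checks case-by-case using the explicit description of $\TTT(X)$ from \S\ref{subsection:Weyl_group} that adding such a correction keeps $x'$ in $\TTT$. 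For types $C^{(1)}_J$ and $BC^{(2)}_J$ this is immediate since $\TTT=Q$; for $A^{(1)}_J$ one verifies $\sum_j(m_j-\sigma_jm_{w(j)})=\sum_j m_j-\sum_j m_{j}=0$ using that $\sigma$ and $w$ have finite support and permute/sign finitely many indices — wait, more carefully, $\sum_j \sigma_j m_{w(j)}$ equals $\sum_j m_j$ only when $\sigma\equiv 1$, so the sum-zero parity must be tracked together with the parity conditions built into $\TTT(B^{(1)}_J)$ etc.; since $\sigma_j=\pm1$, we have $m_j-\sigma_j m_{w(j)}\equiv m_j+m_{w(j)}\pmod 2$, whence $\sum_j(m_j-\sigma_jm_{w(j)})\equiv 2\sum_j m_j\equiv 0\pmod 2$, so the ``sum in $2\ZZ$'' condition is preserved, and for $A^{(1)}_J$ the condition $\sum n_j=0$ likewise transports since in that type $\sigma\equiv 1$ (as $\WW(A_J)=S_{(J)}$) so $n'_j=n_j$. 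This establishes $\inf\la(\widehat{\WW}.\chi-\chi)\geq\inf\la_{\mm}(\widehat{\WW}.\chi_{\mm}-\chi_{\mm})$.

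The reverse inequality is entirely symmetric: applying the forward construction with $\mm$ replaced by $-\mm$ (which is again in $\ZZ^J$, resp.\ $2\ZZ^J$) to the pair $(\la_{\mm},\chi_{\mm})$ recovers $(\la,\chi)$, since $\la_{\mm}$ with the tuple $-(-\mm)=\mm$ subtracted from it — careful: $(\la_{\mm})_{-\mm}=(1,(\la^0-\mm)-(-\mm),0)=(1,\la^0,0)=\la$ and similarly $(\chi_{\mm})_{-\mm}=\chi$. Hence $\inf\la_{\mm}(\widehat{\WW}.\chi_{\mm}-\chi_{\mm})\geq\inf\la(\widehat{\WW}.\chi-\chi)$, giving equality. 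The equivalence of the PEC statements (1) and (2) is then immediate, since $(J,\la,\chi)$ satisfies the PEC precisely when $\inf\la(\widehat{\WW}.\chi-\chi)>-\infty$.

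The main obstacle, and the only place where genuine care is needed, is the verification that the modified translation $x'$ stays in the lattice $\TTT$ for each of the seven types — in particular keeping straight the interaction between the sign tuple $\sigma$, the parity conditions defining $\TTT(B^{(1)}_J)=\TTT(D^{(1)}_J)=\TTT(C^{(2)}_J)$ and $\TTT(B^{(2)}_J)$, and the extra hypothesis $m_j\in 2\ZZ$ imposed exactly in the $B^{(2)}_J$ case (where $\TTT$ consists of tuples with all entries even, so one genuinely needs $m_j-\sigma_jm_{w(j)}\in 2\ZZ$, which forces $m_j\in 2\ZZ$). Everything else is the term-by-term matching of the two sums in \eqref{eqn:second_eqn}, which is routine once the substitution $n'_j=n_j+m_j-\sigma_jm_{w(j)}$ is written down. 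I would organise the proof as: (i) state the substitution; (ii) verify term-by-term equality of the summands; (iii) run through the $\TTT(X)$ table to confirm $x'\in\TTT$; (iv) invoke symmetry with $-\mm$; (v) conclude.
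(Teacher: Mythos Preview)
Your proposal is correct and follows essentially the same approach as the paper: both fix $\sigma$ and $w$, shift the translation part by $m_j-\sigma_j m_{w(j)}$ (the paper writes the inverse substitution $x'=x+\sum_j(\sigma_j m_{w(j)}-m_j)e_j$, giving the opposite inequality first), match the summands in \eqref{eqn:second_eqn} term-by-term, and finish by symmetry with $-\mm$. The only difference is that the paper compresses your step (iii) into ``One easily checks that $x'\in\TTT$'', whereas you spell out the lattice verification type by type; your parity arguments there are correct, including the observation that in type $A_J^{(1)}$ one has $\sigma\equiv 1$, and that the correction $m_j-\sigma_j m_{w(j)}$ vanishes outside the finite support of $\sigma$ and $w$ so all sums are finite.
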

\begin{proof}
Given $\sigma\in \{\pm 1\}^{(J)}$ and $w\in S_{(J)}$ such that $\sigma w\inv\in \WW$, we will prove that
\begin{equation*}
\inf_{x\in\TTT}\la(\tau_{x}\sigma w\inv .\chi-\chi)=\inf_{x\in\TTT}\la_{\mm}(\tau_{x}\sigma w\inv .\chi_{\mm}-\chi_{\mm}),
\end{equation*}
yielding the claim.
Let $\sigma=(\sigma_j)_{j\in J}\in \{\pm 1\}^{(J)}$ and $w\in S_{(J)}$ be as above, and let $x=\sum_{j\in J}{n_je_j}\in \TTT$. Set $$x'=x+\sum_{j\in J}{(\sigma_{w(j)}m_{w(j)}-m_j)e_j}.$$ One easily checks that $x'\in\TTT$. Moreover, (\ref{eqn:second_eqn}) implies that 
\begin{equation*}
\begin{aligned}
\la_{\mm}(\tau_{x}\sigma w\inv .\chi_{\mm}-\chi_{\mm})&=\frac{1}{2}\sum_{j\in J}{\Big((n_j-m_j+\sigma_{w(j)}m_{w(j)}+\la_j+\sigma_{w(j)}d_{w(j)})^2-(\la_j+d_j)^2\Big)}\\
&=\la(\tau_{x'}\sigma w\inv .\chi-\chi).
\end{aligned}
\end{equation*}
We deduce that 
$$\inf_{x\in\TTT}\la_{\mm}(\tau_{x}\sigma w\inv .\chi_{\mm}-\chi_{\mm})\geq\inf_{x\in\TTT}\la(\tau_{x}\sigma w\inv .\chi-\chi).$$
The same argument with $(J,\la,\chi)$ replaced by $(J,\la_{\mm},\chi_{\mm})$ and $\mm$ replaced by $-\mm$ yields the inequality in the other direction, as desired.
\end{proof}

\begin{remark}\label{remark:Z-discrete_alt}
Note that for $\mm=[\la^{0}]\in\ZZ^J$, the passage from $(J,\la,\chi)$ to $(J,\la_{\mm},\chi_{\mm})$ amounts to replace $\la^{0}$ by $\la^{0}-[\la^{0}]=\fr{\la^{0}}$ and $\chi^{0}$ by $\chi^{0}+[\la^{0}]$. On the other hand, if $\la^{0}=\fr{\la^{0}}$, then $\la$ is $\ZZ$-discrete if and only if $\la^0$ is finite.
\end{remark}

\section{Consequences of the PEC for \texorpdfstring{$\widehat{\WW}(A^{(1)}_J)$}{W(A1J)} and \texorpdfstring{$\widehat{\WW}(C^{(1)}_J)$}{W(C1J)}}
%\section{Consequences of the PEC for $\widehat{\WW}(A^{(1)}_J)$ and $\widehat{\WW}(C^{(1)}_J)$}

\begin{prop}\label{prop:necessary_A_aff}
Assume that $\la$ is $\ZZ$-discrete and that $\la^{0}=\fr{\la^{0}}$. Assume moreover that $(J,\la,\chi)$ satisfies the PEC for $\widehat{\WW}(A^{(1)}_J)$. Then the following hold:
\begin{enumerate}
\item
$D(J)$ is bounded.
\item
If $r_{\min}$ and $r_{\max}$ respectively denote the minimal and maximal accumulation points of $J$, then
\begin{equation*}
r_{\max}-r_{\min}\leq 1.
\end{equation*}
\item
If $r_{\max}-r_{\min}= 1$, then
\begin{equation*}
\sum_{j\in J_+}{(d_j-r_{\max})}<\infty\quad\textrm{and}\quad\sum_{j\in J_-}{(r_{\min}-d_j)}<\infty,
\end{equation*}
where $J_+:=\{j\in J \ | \ d_j>r_{\max}\}$ and $J_-:=\{j\in J \ | \ d_j<r_{\min}\}$.
\end{enumerate}
\end{prop}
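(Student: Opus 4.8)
The plan is to exploit formula~(\ref{eqn:second_eqn}) with cleverly chosen translations $x\in\TTT(A^{(1)}_J)$, keeping $\sigma=w=1$, so that
$$\la(\tau_x.\chi-\chi)=\tfrac12\sum_{j\in J}\big((n_j+\la_j+d_j)^2-(\la_j+d_j)^2\big),$$
where $x=\sum_j n_je_j$ with $\sum_j n_j=0$ and only finitely many $n_j$ nonzero. Writing $s_j:=\la_j+d_j$, the PEC for $\widehat{\WW}(A^{(1)}_J)$ forces $\sum_{j}\big((n_j+s_j)^2-s_j^2\big)$ to be bounded below over all such admissible $(n_j)$. For part~(1), I would argue by contradiction: if $D(J)$ is unbounded, then since $\la^0=\fr{\la^0}$ has values in $[-\tfrac12,\tfrac12)$, the $s_j$ are unbounded as well; picking an index $i$ with $|s_i|$ large and an arbitrary fixed index $k$, and taking $x=e_k-e_i$ (i.e.\ $n_i=-1$, $n_k=+1$, the admissibility $\sum n_j=0$ holds), the sum becomes $(s_i-1)^2-s_i^2+(s_k+1)^2-s_k^2=-2s_i+2s_k+2$, which tends to $-\infty$ as $s_i\to+\infty$ (and symmetrically using $x=e_i-e_k$ if $s_i\to-\infty$). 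This gives the contradiction, so $D(J)$ — and hence $\Lambda(J)+D(J)$ — is bounded.

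For parts~(2) and~(3) the key is a transposition-type move combined with a \emph{mass-transfer} argument. Given two indices $i,j$, the translation $x=e_i-e_j$ yields contribution $2(s_i-s_j)+2 = 2(s_i - s_j + 1)$ to $\la(\tau_x.\chi-\chi)$; iterating, $x=\sum_{i\in A}e_i-\sum_{j\in B}e_j$ for disjoint finite $A,B$ of equal cardinality $m$ gives a contribution that, after expanding the squares, equals $\sum_{i\in A}(2s_i+1)-\sum_{j\in B}(2s_j-1)=2\sum_{i\in A}s_i-2\sum_{j\in B}s_j+2m$. Hence PEC is equivalent to: for every $m$ and every pair of disjoint $m$-subsets $A,B$,
$$\sum_{i\in A}s_i-\sum_{j\in B}s_j+m\geq \textrm{const independent of }A,B,m.$$
Taking $A$ to consist of indices with $d$-values near $r_{\max}$ (so $s_i$ near $r_{\max}+\fr{\la_i}$) and $B$ with $d$-values near $r_{\min}$, and letting $m\to\infty$, forces $r_{\min}-r_{\max}+1\geq 0$, i.e.\ $r_{\max}-r_{\min}\leq 1$; here I must be slightly careful because $\la^0$ is only finite-valued, not constant, so $s_j$ clusters around finitely many translates $r+\fr{\la_j}$ of each accumulation point $r$ — but since all $\fr{\la_j}\in[-\tfrac12,\tfrac12)$ the worst case still gives the stated bound after taking the infimum/supremum over the finitely many residues (this is where finiteness of $\la^0$, guaranteed by $\ZZ$-discreteness and Remark~\ref{remark:Z-discrete_alt}, is used). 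For part~(3), when $r_{\max}-r_{\min}=1$ the above inequality becomes tight in the limit, so any ``excess mass'' of the $d_j$ beyond $r_{\max}$ on $J_+$, or below $r_{\min}$ on $J_-$, must be finitely summable — otherwise choosing $A\subseteq J_+$ and $B\subseteq J_-$ to absorb arbitrarily large total excess drives the sum $\sum_A s_i-\sum_B s_j + m$ to $-\infty$. Formally: given $N$, pick finite $A\subseteq J_+$ with $\sum_{i\in A}(d_i-r_{\max})\geq N+|A|\cdot\max_j|\fr{\la_j}|+\dots$ and likewise $B\subseteq J_-$; balancing $|A|=|B|$ (enlarging the smaller set with indices whose $d$-value is an accumulation point, contributing negligibly) yields a translation violating any lower bound once $N$ is large. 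This gives the two summability conclusions.

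The main obstacle I anticipate is the bookkeeping in parts~(2)--(3) caused by $\la^0$ being finite-valued rather than literally constant: the quantities $s_j=\la_j+d_j$ do not cluster around a single value near each accumulation point $r$ of $D$, but around the finite set $\{r+\ell : \ell\in\fr{\la^0}(J)\}$, and one must check that the extremal choices of $A$ and $B$ respect the cardinality constraint $|A|=|B|$ (dictated by $\TTT(A^{(1)}_J)=\{\sum n_je_j:\sum n_j=0\}$) while still extracting the sharp bound $r_{\max}-r_{\min}\leq 1$ and the summability of the excess. The resolution is that since $\fr{\la_j}\in[-\tfrac12,\tfrac12)$ we always have $s_i - s_j \le (d_i - d_j) + 1$ with the extra $1$ absorbable, and the cardinality constraint costs only the harmless additive $+m$ already present; filling in $A$ or $B$ with ``neutral'' indices (those realizing an accumulation point of $D$) lets us match cardinalities at no asymptotic cost. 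Once this is set up, parts~(2) and~(3) follow from the same family of inequalities by, respectively, letting the common cardinality $m\to\infty$ and letting the transferred excess mass $\to\infty$.
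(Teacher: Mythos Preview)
Your argument for part~(1) is correct and in fact more direct than the paper's: the paper reduces to an external lemma from \cite{PEClocfin} via the translation-invariance trick (Lemma~\ref{lemma:translation_invariance1}), whereas your two-index translation $x=e_k-e_i$ already does the job.

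For parts~(2) and~(3), however, there is a genuine gap. You work exclusively with elements $\tau_x\in\widehat{\WW}(A^{(1)}_J)$, i.e.\ with the translation lattice alone, and never bring in the permutations $\WW(A_J)=S_{(J)}$. This is not enough to obtain the sharp bound $r_{\max}-r_{\min}\le 1$. Here is a concrete obstruction: take $\Lambda(J)=\{0.4,-0.4\}$ with both level sets infinite, and set $d_j=-0.9$ on $J_{0.4}$ and $d_j=0.9$ on $J_{-0.4}$. Then $r_{\max}-r_{\min}=1.8>1$, yet $s_j=\la_j+d_j\in\{-0.5,0.5\}$ for all $j$, and since $n(n-1)\ge 0$ and $n(n+1)\ge 0$ for every integer $n$,
\[
\sum_{j}\big((n_j+s_j)^2-s_j^2\big)=\sum_{j\in J_{0.4}}(n_j^2-n_j)+\sum_{j\in J_{-0.4}}(n_j^2+n_j)\ge 0
\]
for \emph{every} $x=\sum_j n_je_j\in\TTT(A^{(1)}_J)$. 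Translations alone see no violation, but the full PEC does fail here (combine a translation with transpositions swapping pairs from the two level sets). Your ``absorbable $+1$'' heuristic breaks precisely because the correlation between $\la_j$ and $d_j$ can completely cancel the spread of the $d_j$'s inside $s_j=\la_j+d_j$. (Incidentally, in your sketch the roles of $A$ and $B$ are reversed: with $x=\sum_{A}e_i-\sum_{B}e_j$ you want $A$ to carry the \emph{small} $s$-values to push the expression down.)

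The fix is the paper's: couple the translation with a permutation $w\in S_{(J)}$ exchanging indices near $r_{\max}$ with indices near $r_{\min}$. Then~(\ref{eqn:second_eqn}) factors as $\sum_s(1+\la_{i_s}-\la_{j_s})(1+d_{j_s}-d_{i_s})$; since $\la^{0}=\fr{\la^{0}}$ forces $1+\la_{i_s}-\la_{j_s}>0$, the second factor alone controls the sign and yields $r_{\max}-r_{\min}\le 1$. The same factored form, with a uniform positive lower bound $\epsilon$ on the first factor, is what drives part~(3).
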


\begin{proof} (1) Note first that, by assumption, $\Lambda(J)$ is finite and contained in $[-\tfrac{1}{2},\tfrac{1}{2} \thinspace )$. In particular, to prove that $D(J)$ is bounded, it is sufficient to prove that $D(J_m)$ is bounded for each $m\in\RR$, where 
$J_m := \Lambda^{-1}(m)$. If $J_m$ is finite, there is nothing to prove. Assume now that $J_m$ is infinite, and write it as a disjoint union $J_m=I_{-1}\cup I_0\cup I_1$ of three infinite subsets. Define the tuple $\mm=(m_j)_{j\in J}\in\ZZ^J$ by 
\begin{equation*}
m_j=
\left\{
\begin{array}{ll}
k\quad &\textrm{if $j\in I_k$ ($k=-1,0,1$),}\\
0\quad &\textrm{otherwise.}
\end{array}
\right.
\end{equation*}
Since $(J,\la_{\mm},\chi_{\mm})$ satisfies the PEC for $\widehat{\WW}(A^{(1)}_J)$ by Lemma~\ref{lemma:translation_invariance1}, the triple $(J,\la^{0}-\mm,\chi^{0}+\mm)$ satisfies the PEC for $\WW(A_J)$. In particular, since $m$ is not an extremal value of the set $\{\la_j-m_j \ | \ j\in J\}$ and since $\la_j-m_j=m$ for all $j\in I_0$, we deduce from \cite[Lemma~4.4]{PEClocfin} that $D(I_0)$ is bounded. Repeating this argument with $I_0$ and $I_{\pm 1}$ permuted yields that each of the three sets $D(I_k)$, $k=-1,0,1$, is bounded, and hence that $D(J_m)$ is bounded, as desired. This proves (1).

(2) Since $D(J)$ is bounded, the minimal and maximal accumulation points $r_{\min}$ and $r_{\max}$ of $J$ indeed exist. Moreover, since $\Lambda(J)$ is finite, there exist some $m,n\in\RR$ such that $r_{\max}$ is an accumulation point for $J_m$ and $r_{\min}$ is an accumulation point for $J_n$. In particular, $m,n\in\Lambda(J)$, so that 
$$-\tfrac{1}{2}\leq m,n<\tfrac{1}{2}.$$
For short, we set $r_m:=r_{\max}$ and $r_n:=r_{\min}$.
Fix some $\epsilon>0$, and choose some (disjoint) infinite countable subsets $I_m=\{i_1,i_2,\dots\}\subseteq J_m$ and $I_n=\{j_1,j_2,\dots\}\subseteq J_n$ such that 
$$\sum_{s\in\NN}{|r_m-d_{i_s}|}<\epsilon\quad\textrm{and}\quad\sum_{s\in\NN}{|r_n-d_{j_s}|}<\epsilon.$$
For each $k\in\NN$, let $w_k\in S_{(J)}$ be the product of the transpositions $\tau_s$, $s=1,\dots,k$, exchanging $i_s$ and $j_s$. We also set
$$x_k=\sum_{s=1}^{k}{(e_{i_s}-e_{j_s})}\in\TTT(A^{(1)}_J).$$ We deduce from (\ref{eqn:second_eqn}) that for each $k\in\NN$,
\begin{equation*}
\begin{aligned}
\la(\tau_{x_k}w_k\inv.\chi-\chi)&=k+\sum_{s=1}^{k}{(d_{j_s}-d_{i_s})}+\sum_{s=1}^{k}{(\la_{i_s}-\la_{j_s})}+\sum_{s=1}^{k}{(\la_{i_s}-\la_{j_s})(d_{j_s}-d_{i_s})}\\
&=\sum_{s=1}^{k}{(1+\la_{i_s}-\la_{j_s})(1+d_{j_s}-d_{i_s})}=\sum_{s=1}^{k}{(1+m-n)(1+d_{j_s}-d_{i_s})}\\
&= k(1+m-n)(1+r_n-r_m)+(1+m-n)\sum_{s=1}^{k}{\Big((d_{j_s}-r_n)-(d_{i_s}-r_m)\Big)}\\
&\leq k(1+m-n)(1+r_n-r_m) + 2\epsilon (1+m-n).
\end{aligned}
\end{equation*}
Since $k\in\NN$ was arbitrary, the PEC for $\widehat{\WW}(A^{(1)}_J)$ then implies that
$$(1+m-n)(1+r_n-r_m)\geq 0.$$
Since $1+m-n>0$ by assumption, we deduce that $r_m-r_n\leq 1$, yielding (2).

(3) Finally, assume that $r_{\max}-r_{\min}= 1$. We prove that $\sum_{j\in J_+}{(d_j-r_{\max})}<\infty$, the proof for $J_-$ being similar. If $J_+$ is finite, there is nothing to prove. Assume now that $J_+$ is infinite. Let $I_+=\{i_1,i_2,\dots\}$ be an arbitrary infinite countable subset of $J_+$, so that 
$$d_{i_s}-r_{\max}>0\quad\textrm{for all $s\in\NN$.}$$
Let $\epsilon>0$ be such that $$|\la_i-\la_j|<1-\epsilon\quad\textrm{for all $i,j\in J$,}$$ and choose some infinite countable subset $I_-=\{j_1,j_2,\dots\}\subseteq J\setminus I_+$ such that 
$$\sum_{s\in\NN}{|r_{\min}-d_{j_s}|}<\epsilon.$$
Defining the elements $x_k$, $w_k$ for $k \in \NN$ 
as above, we deduce from the equality $r_{\max}-r_{\min}=1$ that
\begin{equation*}
\begin{aligned}
\la(\tau_{x_k}w_k\inv.\chi-\chi)&=\sum_{s=1}^{k}{(1+\la_{i_s}-\la_{j_s})(1+d_{j_s}-d_{i_s})}\\
&=\sum_{s=1}^{k}{(1+\la_{i_s}-\la_{j_s})(d_{j_s}-r_{\min})}+\sum_{s=1}^{k}{(1+\la_{i_s}-\la_{j_s})(r_{\max}-d_{i_s})}\\
&\leq 2\sum_{s=1}^{k}{|r_{\min}-d_{j_s}|}-\sum_{s=1}^{k}{(1+\la_{i_s}-\la_{j_s})\cdot|d_{i_s}-r_{\max}|}\\
&\leq 2\epsilon - \epsilon \sum_{s=1}^{k}{|d_{i_s}-r_{\max}|}.
\end{aligned}
\end{equation*}
Since $k\in\NN$ was arbitrary, we deduce that
$$\sum_{j\in I_+}{(d_{j}-r_{\max})}<2 - \epsilon^{-1}
\inf \la\big(\widehat{\WW}(A^{(1)}_J).\chi-\chi\big)$$
for any infinite countable subset $I_+$ of $J_+$, yielding the corresponding assertion for $I_+$ replaced by $J_+$. This concludes the proof of (3).
\end{proof}

For a tuple $\nu=(\nu_j)_{j\in J}\in\RR^J$, we set
$$|\nu|:=(|\nu_j|)_{j\in J}\in\RR^J.$$

\begin{prop}\label{prop:necessary_B_aff}
Assume that $\la^{0}=\fr{\la^{0}}$ and that $(J,\la,\chi)$ satisfies the PEC for $\widehat{\WW}(C^{(1)}_J)$. Let $m\in\RR$. Then the following hold:
\begin{enumerate}
\item
If $|m|<1/2$, then $J_m^{>1/2}$ and $J_m^{<-1/2}$ are summable.
\item
If $|m|=1/2$, then $J_m^{>1}$ and $J_m^{<-1}$ are summable.
\item
$(J,-|\la^{0}|,|\fr{\chi^{0}}|)$ satisfies the PEC for $\WW(A_J)$.
\item
$\sum_{j\in J_+}{|\la_jd_j|}<\infty$, where $J_+:=\{j\in J \ | \ \la_jd_j>0\}$.
\end{enumerate}
\end{prop}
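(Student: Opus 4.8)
The plan is to extract all four assertions by feeding suitable elements $\widehat{w}=\tau_x\sigma w\inv\in\widehat{\WW}(C^{(1)}_J)$ into the positive energy condition, exploiting that for type $C^{(1)}_J$ the translation lattice is \emph{all} of $\TTT(C^{(1)}_J)=Q=\ZZ^{(J)}$ and the sign group $\{\pm1\}^{(J)}$ is unconstrained; thus (\ref{eqn:second_eqn}) applies to any finitely supported $(n_j)\in\ZZ^{(J)}$, $\sigma\in\{\pm1\}^{(J)}$ and any $w\in S_{(J)}$. Set $M:=-\inf\la\big(\widehat{\WW}(C^{(1)}_J).\chi-\chi\big)\in[0,\infty)$.

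Assertions (1), (2) and (4) come from \emph{diagonal} test elements, i.e. with $w=1$ (so $d_{w(j)}=d_j$ in (\ref{eqn:second_eqn})). For (4): for a finite $S\subseteq J_+$, taking $\sigma_j=-1$ for $j\in S$ gives $\la(\widehat{w}.\chi-\chi)=-2\sum_{j\in S}\la_jd_j=-2\sum_{j\in S}|\la_jd_j|$, whence $\sum_{j\in J_+}|\la_jd_j|\le M/2$. For (1): given $|m|<1/2$ and a finite $S\subseteq J_m^{>1/2}$, taking $\sigma_j=-1$ and $n_j=1$ for $j\in S$ yields, after a difference-of-squares computation, $\la(\widehat{w}.\chi-\chi)=(1+2m)\sum_{j\in S}(\tfrac12-d_j)$; since $1+2m>0$ and $\tfrac12-d_j<0$ on $S$, the PEC forces $\sum_{j\in J_m^{>1/2}}(d_j-\tfrac12)\le M/(1+2m)<\infty$, and $J_m^{<-1/2}$ is the mirror case $n_j=-1$. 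For (2): $\la^0=\fr{\la^0}$ forces $m=-1/2$, for which $n_j=1$ would give $\la(\widehat{w}.\chi-\chi)=0$; one therefore takes $n_j=2$ on a finite $S\subseteq J_m^{>1}$, getting $\la(\widehat{w}.\chi-\chi)=\sum_{j\in S}(1-d_j)$ and $\sum_{j\in J_m^{>1}}(d_j-1)<\infty$, and $n_j=-1$ on $S\subseteq J_m^{<-1}$, getting $\la(\widehat{w}.\chi-\chi)=2\sum_{j\in S}(d_j+\tfrac12)$ and hence even $\sum_{j\in J_m^{<-1}}|d_j+\tfrac12|<\infty$, a fortiori $\sum_{j\in J_m^{<-1}}|d_j+1|<\infty$. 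All the integer vectors used lie in $\TTT(C^{(1)}_J)=\ZZ^{(J)}$, so these are legitimate elements.

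Assertion (3) is the heart of the matter. Unravelling the $\WW(A_J)=S_{(J)}$-action, it is equivalent to $\sup_{w\in S_{(J)}}\sum_{k\in J}\big(|\la_{w(k)}|-|\la_k|\big)|\fr{d_k}|<\infty$. Given $w\in S_{(J)}$ with finite support $F$, I will test the PEC against $\widehat{w}=\tau_x\sigma w\inv$, with $\sigma$ and $x=\sum_jn_je_j$ supported on $F$ and chosen so as to simultaneously reduce $d_{w(j)}$ modulo $1$ and absorb the sign of $\la_j$: pick $\sigma_j\in\{\pm1\}$ with $|\la_j+\sigma_j\fr{d_{w(j)}}|=\big||\la_j|-|\fr{d_{w(j)}}|\big|$ (possible since $|\la_j|,|\fr{d_{w(j)}}|\le\tfrac12$) and put $n_j:=-\sigma_j[d_{w(j)}]$, so that $n_j+\la_j+\sigma_jd_{w(j)}=\la_j+\sigma_j\fr{d_{w(j)}}$. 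Substituting into (\ref{eqn:second_eqn}) and using that $w$ permutes $F$ (to replace $\sum_{j\in F}\fr{d_{w(j)}}^2$ by $\sum_{j\in F}\fr{d_j}^2$ and $\sum_{j\in F}|\la_j||\fr{d_{w(j)}}|$ by $\sum_{k\in F}|\la_{w\inv(k)}||\fr{d_k}|$) gives
\[ \la(\widehat{w}.\chi-\chi)=-\sum_{k\in F}|\la_{w\inv(k)}|\,|\fr{d_k}|+A_F,\qquad A_F:=\tfrac12\sum_{j\in F}\big(\la_j^2+\fr{d_j}^2-(\la_j+d_j)^2\big), \]
where crucially $A_F$ depends only on $F$ and not on $w$.

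It remains to bound $A_F$ from above, which I do by specialising to $w=\mathrm{id}$: the resulting element $\widehat{w}_0$ satisfies $\la(\widehat{w}_0.\chi-\chi)=\tfrac12\sum_{j\in F}\big((|\la_j|-|\fr{d_j}|)^2-(\la_j+d_j)^2\big)$, and each summand is $\le0$ by the elementary inequality $\big||\la_j|-|\fr{d_j}|\big|\le|\la_j+d_j|$, valid whenever $\la_j\in[-\tfrac12,\tfrac12)$: indeed $|\la_j+d_j|\ge\operatorname{dist}(\la_j+d_j,\ZZ)=\operatorname{dist}(\la_j+\fr{d_j},\ZZ)$, and the latter is $\ge\big||\la_j|-|\fr{d_j}|\big|$ --- by the reverse triangle inequality if $|\la_j+\fr{d_j}|\le\tfrac12$, and by $1-|\la_j|-|\fr{d_j}|\ge\big||\la_j|-|\fr{d_j}|\big|$ (using $\max(|\la_j|,|\fr{d_j}|)\le\tfrac12$) otherwise. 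Hence $A_F\le\sum_{k\in F}|\la_k|\,|\fr{d_k}|$; combined with the PEC inequality $-\sum_{k\in F}|\la_{w\inv(k)}|\,|\fr{d_k}|+A_F\ge-M$ for the general $\widehat{w}$, this yields $\sum_{k\in F}\big(|\la_{w\inv(k)}|-|\la_k|\big)|\fr{d_k}|\le M$ for every $w\in S_{(J)}$ --- precisely the asserted PEC for $(J,-|\la^0|,|\fr{\chi^0}|)$ with respect to $\WW(A_J)$. The delicate point is exactly this last step: a careless choice of $\sigma$ in the gadget for general $w$ would send $\la(\widehat{w}.\chi-\chi)$ to $-\infty$ for reasons unrelated to $w$, and pairing it with the $w=\mathrm{id}$ gadget so that the $F$-dependent leftover $A_F$ cancels is what makes the estimate uniform.
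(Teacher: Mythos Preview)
Your proof is correct and follows essentially the same strategy as the paper's, with only minor differences in execution.

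For (1) and (2) you make concrete explicit choices of the translation ($n_j\in\{1,2,-1\}$ with $\sigma_j=-1$), which after a difference-of-squares yield exact linear expressions in the $d_j$. The paper instead chooses $n_j=-[m+\sigma_jd_j]$ uniformly and bounds the resulting $\langle m+\sigma_jd_j\rangle^2-(m+d_j)^2$; this is slightly less sharp (it introduces an auxiliary $\epsilon$ with $|m|<\tfrac12-\epsilon$) but handles both cases in one stroke. Your observation that $|m|=\tfrac12$ forces $m=-\tfrac12$ under $\la^0=\langle\la^0\rangle$, and that $n_j=1$ then gives zero, is a nice way to motivate the jump to $n_j=2$.

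For (3) the two arguments are equivalent reorganisations of the same computation: both pick $\sigma_j$ so that $\la_j$ and $\sigma_j\langle d_{w(j)}\rangle$ have opposite signs, and both reduce $d_{w(j)}$ modulo $1$ via the translation. The paper replaces $(\la_j+d_j)^2$ by $(|\la_j|-|d_j|)^2$ first and then bounds the residual $w$-independent sum $\tfrac12\sum(|\langle d_j\rangle|-|d_j|)(|d_j|+|\langle d_j\rangle|-2|\la_j|)\le 0$; you instead isolate the $w$-independent piece $A_F$ and bound it via the pointwise inequality $\big||\la_j|-|\langle d_j\rangle|\big|\le|\la_j+d_j|$. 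These two inequalities are equivalent (each follows from the other by expanding), so the arguments coincide at the level of content. Part (4) is identical in both.
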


\begin{proof}
(1) Assume that $|m|<1/2$. Fix some $\epsilon>0$ such that $|m|<1/2-\epsilon$. 
Let $I$ be any finite subset of $J_m^{>1/2}$ (resp. $J_m^{<-1/2}$), so that $|d_j|>1/2$ for all $j\in I$. Let $\sigma=(\sigma_j)_{j\in J}\in\{\pm 1\}^{(J)}$ with support in $I$ be such that $m\sigma_j\fr{d_j}\leq 0$ for all $j\in I$. Then, for all $i\in I$,
$$\bfr{m+\sigma_jd_j}^2=\bfr{m+\sigma_j\fr{d_j}}^2=\bfr{|m|-|\fr{d_j}|}^2=\big(|m|-|\fr{d_j}|\big)^2.$$
Consider also the element $x=\sum_{j\in I}{n_je_j}\in\TTT(C^{(1)}_J)$ defined by $n_j=-[m+\sigma_jd_j]$ for all $j\in I$. We then deduce from (\ref{eqn:second_eqn}) that
\begin{equation*}
\begin{aligned}
\la(\tau_x\sigma.\chi-\chi)&=\frac{1}{2}\sum_{j\in I}{\Big((n_j+m+\sigma_jd_{j})^2-(m+d_j)^2\Big)}\\
&=\frac{1}{2}\sum_{j\in I}{\Big(\bfr{m+\sigma_jd_j}^2-(m+d_j)^2\Big)}\\
&\leq\frac{1}{2}\sum_{j\in I}{\Big((|m|-|\fr{d_j}|)^2-(|m|-|d_j|)^2\Big)}\\
&=-\frac{1}{2}\sum_{j\in I}{\big(|d_j|-|\fr{d_j}|\big)\big(|d_j|+|\fr{d_j}|-2|m|\big)}.\\
\end{aligned}
\end{equation*}
Note that $|d_j|+|\fr{d_j}|\geq 1$ for all $j\in I$, and hence $|d_j|+|\fr{d_j}|-2|m|>2\epsilon$ for all $j\in I$. Since moreover 
$$|d_j|-|\fr{d_j}|=(|d_j|-1/2)+(1/2-|\fr{d_j}|)> 0\quad\textrm{for all $j\in I$},$$
we deduce that 
\begin{equation}\label{eqn:PECB1}
\la(\tau_x\sigma.\chi-\chi) \leq -\epsilon\sum_{j\in I}{\big(|d_j|-|\fr{d_j}|\big)}
\leq -\epsilon\sum_{j\in I}{(|d_j|-1/2)}.
\end{equation}
Since the finite subset $I$ of $J_m^{>1/2}$ (resp. $J_m^{<-1/2}$) was arbitrary, the PEC for $\widehat{\WW}(C^{(1)}_J)$ implies that $J_m^{>1/2}$ (resp. $J_m^{<-1/2}$) is summable, proving (1). 

(2) Assume next that $|m|=1/2$. Let $I$ be any finite subset of $J_m^{>1}$ (resp. $J_m^{<-1}$), so that $|d_j|>1$ for all $j\in I$. Defining $\sigma$ and $x$ as above, we again get that
\begin{equation*}
\begin{aligned}
\la(\tau_x\sigma.\chi-\chi)\leq -\frac{1}{2}\sum_{j\in I}{\big(|d_j|-|\fr{d_j}|\big)\big(|d_j|+|\fr{d_j}|-2|m|\big)}.\\
\end{aligned}
\end{equation*}
Since $|d_j|-|\fr{d_j}|>1/2$ and $|d_j|+|\fr{d_j}|-2|m|=|d_j|+|\fr{d_j}|-1>0$ for all $j\in I$, we deduce that
\begin{equation}\label{eqn:PECB2}
\la(\tau_x\sigma.\chi-\chi) \leq -\frac{1}{4}\sum_{j\in I}{\big(|d_j|+|\fr{d_j}|-1\big)}\leq -\frac{1}{4}\sum_{j\in I}{(|d_j|-1)}.
\end{equation}
Since the finite subset $I$ of $J_m^{>1}$ (resp. $J_m^{<-1}$) was arbitrary, the PEC for $\widehat{\WW}(C^{(1)}_J)$ implies that $J_m^{>1}$ (resp. $J_m^{<-1}$) is summable, proving (2).

(3) 
We now prove that $(J,-|\la^{0}|,|\fr{\chi^{0}}|)$ satisfies the PEC for $\WW(A_J)$. Let $w\in S_{(J)}$, and let $I$ be a finite subset of $J$ containing the support of $w$. Let $\sigma=(\sigma_j)_{j\in J}\in\{\pm 1\}^{(J)}$ with support in $I$ be such that $\la_j\sigma_j\fr{d_{w(j)}}\leq 0$ for all $j\in I$. Consider also the element $x=\sum_{j\in I}{n_je_j}\in\TTT(C^{(1)}_J)$ defined by $n_j=-[\la_j+\sigma_jd_{w(j)}]$ for all $j\in I$. As before, we deduce from (\ref{eqn:second_eqn}) that
\begin{equation*}
\begin{aligned}
\la(\tau_x\sigma w\inv.\chi-\chi)&=\frac{1}{2}\sum_{j\in I}{\Big((n_j+\la_j+\sigma_jd_{w(j)})^2-(\la_j+d_j)^2\Big)}\\
&=\frac{1}{2}\sum_{j\in I}{\Big(\bfr{\la_j+\sigma_jd_{w(j)}}^2-(\la_j+d_j)^2\Big)}\\
&\leq\frac{1}{2}\sum_{j\in I}{\Big((|\la_j|-|\fr{d_{w(j)}}|)^2-(|\la_j|-|d_j|)^2\Big)}\\
&=\sum_{j\in I}{|\la_j|\cdot (|\fr{d_j}|-|\fr{d_{w(j)}}|)}+\frac{1}{2}\sum_{j\in I}{\big(|\fr{d_j}|-|d_j|\big)\big(|d_j|+|\fr{d_j}|-2|\la_j|\big)}.
\end{aligned}
\end{equation*}
Since $|\fr{d_j}|-|d_j|=0$ for $|d_j|\leq 1/2$, while $|d_j|+|\fr{d_j}|-2|\la_j|\geq 0$ for $|d_j|\geq 1/2$, we deduce that 
$$\big(|\fr{d_j}|-|d_j|\big)\big(|d_j|+|\fr{d_j}|-2|\la_j|\big)\leq 0\quad\textrm{for all $j\in I$,}$$
and hence
\begin{equation}\label{eqn:PECB3}
\la(\tau_x\sigma w\inv.\chi-\chi)\leq \sum_{j\in I}{|\la_j|\cdot (|\fr{d_j}|-|\fr{d_{w(j)}}|)}=-|\la^{0}|\big(w\inv.|\fr{\chi^{0}}|-|\fr{\chi^{0}}|\big).
\end{equation}
As $w\in S_{(J)}=\WW(A_J)$ was arbitrary, this proves that $(J,-|\la^{0}|,|\fr{\chi^{0}}|)$ satisfies the PEC for $\WW(A_J)$, yielding (3).

(4) Finally, let $I$ be an arbitrary finite subset of $J_+$, and consider the element $\sigma=(\sigma_j)_{j\in J}\in\{\pm 1\}^{(J)}$ with support $I$. Then $$\la(\sigma.\chi-\chi)=\sum_{j\in J}{\la_j(\sigma_jd_j-d_j)}=-2\sum_{j\in I}{\la_jd_j}=-2\sum_{j\in I}{|\la_jd_j|}.$$
Hence $\sum_{j\in I}{|\la_jd_j|}\leq -\tfrac{1}{2}\cdot\inf\la(\WW(C_J).\chi-\chi)$ for any finite subset $I\subseteq J_+$, proving (4). This concludes the proof of the proposition.
\end{proof}

\section{Minimal energy sets}\label{Section:MES}
We first characterise the triples $(J,\la,\chi)$ of minimal energy for $\widehat{\WW}$, where as before $\la=(1,\la^0,0)$ and $\chi=(0,\chi^0,1)$ for some tuples $\la^0,\chi^0\in\RR^J$. Although, for the purpose of proving Theorem~\ref{thmintro:A}, we will only need to describe such triples for the types $A_J^{(1)}$ and $C^{(1)}_J$, we also provide, for the sake of completeness, explicit descriptions for the other types.

\begin{definition}\label{definition:min_sets_affine}
For a tuple $\la^{0}=(\la_j)_{j\in J}\in\RR^J$ with $\la^{0}=\fr{\la^{0}}$, we define the following convex subsets of $\RR^J$. 
We set
$$C_{\min}(\la^{0},A^{(1)}_J)=\RR\cdot \mathbf{1}+\big\{(d_j)_{j\in J}\in\RR^J \ | \ \forall j\in J: \ |d_j|\leq \tfrac{1}{2}\quad\textrm{and}\quad \forall i,j\in J: \ \la_i<\la_j\implies d_i\geq d_j\big\},$$
where $\mathbf{1}\in\RR^J$ is the constant function~$1$.
We also let $C_{\min}(\la^{0},C^{(1)}_J)$ denote the set of $(d_j)_{j\in J}\in\RR^J$ satisfying the following four conditions:
\begin{enumerate}
\item[(C1)]
$\forall j\in J: \ |\la_j|<\tfrac{1}{2}\implies |d_j|\leq \tfrac{1}{2}$.
\item[(C2)]
$\forall j\in J: \ |\la_j|=\tfrac{1}{2}\implies |d_j|\leq 1$.
\item[(C3)]
$\forall j\in J: \ \la_jd_j\leq 0$.
\item[(C4)]
$\forall i,j\in J: \ |\la_i|<|\la_j|\implies |\fr{d_i}|\leq |\fr{d_j}|$.
\end{enumerate}
\end{definition}

The following lemma, which contains some observations 
that will be used in the sequel, is an easy exercise.  
\begin{lemma}\label{lemma:fraction_part}
Let $a,b,x\in\RR$.
\begin{enumerate}
\item If $-\tfrac{1}{2}\leq a,b\leq\tfrac{1}{2}$, then $\fr{a\pm b}^2\geq (|a|-|b|)^2$.
\item If $|x| \leq \tfrac{1}{2}$, then $|x| = |\fr x |$. 
\item If $\frac{1}{2} \leq |x| \leq 1$, then $|x| + |\fr x |=1$. 
\item If $|x| \geq 1$, then $|x| - |\fr x |\geq 1$.
\end{enumerate}
\end{lemma}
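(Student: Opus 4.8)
The plan is to derive all four items by direct inspection, using only the reverse triangle inequality $|a\pm b|\geq ||a|-|b||$ together with the defining decomposition $x=[x]+\fr x$ with $\fr x\in[-\tfrac12,\tfrac12)$. As a preliminary observation I would note that both $|x|$ and $|\fr x|$ are invariant under $x\mapsto -x$: indeed $\fr{-x}=-\fr x$ unless $\fr x=-\tfrac12$ (that is, unless $x\in\tfrac12+\ZZ$), in which case $\fr{-x}=-\tfrac12$ as well. Since the statements (2)--(4) involve $x$ only through $|x|$ and $|\fr x|$, it therefore suffices to prove them for $x\geq0$, where one can simply read off the value of $[x]$ on the relevant interval.

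For (2), if $0\leq x\leq\tfrac12$ then $[x]\in\{0,1\}$, and $|\fr x|=|x|$ in either case. For (3), if $\tfrac12\leq x\leq 1$ then $[x]=1$ and $\fr x=x-1\in[-\tfrac12,0]$, so $|x|+|\fr x|=x+(1-x)=1$. For (4), if $x\geq1$ put $k:=[x]\geq1$, so that $\fr x=x-k\in[-\tfrac12,\tfrac12)$: if $\fr x\geq0$ then $|x|-|\fr x|=x-(x-k)=k\geq1$; if $\fr x<0$ then $x\in[k-\tfrac12,k)$, which together with $x\geq1$ forces $k\geq2$ (the value $k=1$ would give $x<1$), whence $|x|-|\fr x|=2x-k\geq 2(k-\tfrac12)-k=k-1\geq1$.

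For (1), set $c:=a\pm b\in[-1,1]$ and distinguish two cases according to the size of $|c|$. If $|c|\leq\tfrac12$, then $|\fr c|=|c|$ by (2), so $\fr{c}^2=c^2\geq ||a|-|b||^2$ by the reverse triangle inequality. If $|c|>\tfrac12$, then $|\fr c|=1-|c|$ by (3); combining $|c|\leq|a|+|b|$ with the identity $|a|+|b|+||a|-|b||=2\max\{|a|,|b|\}\leq1$ gives $|\fr c|=1-|c|\geq ||a|-|b||$, and hence $\fr{c}^2\geq ||a|-|b||^2$ once more.

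The only mildly delicate points --- and the closest thing to an obstacle in this otherwise routine lemma --- concern the half-integer values, where the convention $[x]=\ceil x$ for $x\in\tfrac12+\ZZ$ enters: one must take a little care with the symmetrisation $x\mapsto-x$ at such points (handled by the preliminary remark above), and one must notice in (4) that the potentially problematic configuration $[x]=1$ with $\fr x<0$ --- which would only yield $|x|-|\fr x|=2x-1$, possibly less than $1$ --- is excluded by the hypothesis $|x|\geq1$. Everything else reduces to immediate arithmetic.
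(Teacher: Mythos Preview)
Your proof is correct. The paper does not actually give a proof of this lemma, merely declaring it ``an easy exercise'', so there is nothing to compare against; your argument fills in exactly the kind of routine verification the authors had in mind, and handles the half-integer boundary cases with appropriate care.
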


\begin{prop}\label{prop:minset_B_aff}
Assume that $\la^{0}=\fr{\la^{0}}$. Then the following are equivalent:
\begin{enumerate}
\item
$\inf\la\big(\widehat{\WW}(C^{(1)}_J).\chi-\chi\big)=0$.
\item
$\chi^{0}\in C_{\min}(\la^{0},C^{(1)}_J)$.
\end{enumerate}
\end{prop}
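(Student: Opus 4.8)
The plan is to prove the two implications separately, using the formula~(\ref{eqn:second_eqn}) for $\la(\widehat{w}.\chi-\chi)$ throughout.

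\medskip

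\textbf{Proof of (2)$\Rightarrow$(1).} Assume $\chi^{0}\in C_{\min}(\la^{0},C^{(1)}_J)$, so that conditions (C1)--(C4) hold. First I would note that since the choice $\widehat{w}=\id$ gives $\la(\widehat{w}.\chi-\chi)=0$, it suffices to show $\la(\widehat{w}.\chi-\chi)\geq 0$ for every $\widehat{w}=\tau_x\sigma w\inv\in\widehat{\WW}(C^{(1)}_J)$. By (\ref{eqn:second_eqn}),
\begin{equation*}
\la(\widehat{w}.\chi-\chi)=\frac{1}{2}\sum_{j\in J}\Big((n_j+\la_j+\sigma_jd_{w(j)})^2-(\la_j+d_j)^2\Big),
\end{equation*}
and since $\TTT(C^{(1)}_J)=Q=\ZZ^{(J)}$, the integers $n_j$ may be chosen \emph{freely and independently}. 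Thus the infimum over $x\in\TTT(C^{(1)}_J)$ of the $j$-th summand is $\bfr{\la_j+\sigma_jd_{w(j)}}^2-(\la_j+d_j)^2$, and the problem reduces to showing
\begin{equation}\label{eqn:plan_key}
\sum_{j\in J}\bfr{\la_j+\sigma_jd_{w(j)}}^2\geq\sum_{j\in J}(\la_j+d_j)^2
\end{equation}
for all $\sigma\in\{\pm1\}^{(J)}$ and $w\in S_{(J)}$ with $\sigma w\inv\in\WW(C_J)$ — here both sums have only finitely many nonzero terms. To handle the right-hand side I would first use (C3) to write $(\la_j+d_j)^2=(|\la_j|-|d_j|)^2$ (since $\la_jd_j\leq 0$), and then apply Lemma~\ref{lemma:fraction_part}(1) with $a=\la_j$, $b=\fr{d_{w(j)}}$ (both in $[-\tfrac12,\tfrac12]$ since $\la^{0}=\fr{\la^{0}}$), which gives $\bfr{\la_j+\sigma_jd_{w(j)}}^2=\bfr{\la_j+\sigma_j\fr{d_{w(j)}}}^2\geq(|\la_j|-|\fr{d_{w(j)}}|)^2$. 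So it is enough to prove
\begin{equation*}
\sum_{j\in J}(|\la_j|-|\fr{d_{w(j)}}|)^2\geq\sum_{j\in J}(|\la_j|-|d_j|)^2.
\end{equation*}
Expanding both sides, using that $\sum_j|\la_j|^2$ is invariant and that $\sum_j|d_j|^2\geq\sum_j|\fr{d_j}|^2$ (termwise), reduces~(\ref{eqn:plan_key}) to the single inequality $\sum_j|\la_j|\,|\fr{d_j}|\geq\sum_j|\la_j|\,|\fr{d_{w(j)}}|$, together with a careful accounting of the ``large'' indices where $|d_j|>\tfrac12$. The first of these is exactly the statement that $(|\la_j|)_j$ and $(|\fr{d_j}|)_j$ are ``similarly ordered'', which follows from (C4) by the standard rearrangement argument (this is Proposition~\ref{prop:minimality} applied to $(J,-|\la^{0}|,|\fr{\chi^{0}}|)$, whose minimality characterisation via $C_{\min}(-|\la^{0}|,A_J)$ is precisely (C4) after checking signs). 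For the large indices one uses Lemma~\ref{lemma:fraction_part}(3),(4): when $|d_j|\geq\tfrac12$ one has $|\la_j|\leq\tfrac12\leq|d_j|$ so that $(|\la_j|-|d_j|)^2=(|d_j|-|\la_j|)^2$, and conditions (C1),(C2) bound $|\la_j|$ so that the ``loss'' $(|d_j|-|\fr{d_j}|)\cdot(\text{something}\geq0)$ incurred on the left is compensated — this is the same bookkeeping as in the estimates~(\ref{eqn:PECB1})--(\ref{eqn:PECB3}), now used with the inequalities reversed. I expect this combinatorial matching of the two sums — isolating the large-index contribution and showing the rearrangement plus (C1)--(C2) makes it nonnegative — to be the main technical point.

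\medskip

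\textbf{Proof of (1)$\Rightarrow$(2).} Conversely, assume $\inf\la\big(\widehat{\WW}(C^{(1)}_J).\chi-\chi\big)=0$; in particular $(J,\la,\chi)$ satisfies the PEC for $\widehat{\WW}(C^{(1)}_J)$, so Proposition~\ref{prop:necessary_B_aff} applies. I would extract each of (C1)--(C4) by testing against suitable single Weyl group elements and using that the infimum is exactly $0$ (not merely finite):
\begin{itemize\text{-replacement}}
\end{itemize\text{-replacement}}
For (C3): if $\la_jd_j>0$ for some $j$, apply $\widehat{w}=\sigma$ with support $\{j\}$; then~(\ref{eqn:second_eqn}) gives $\la(\sigma.\chi-\chi)=-2\la_jd_j<0$, contradicting minimality (this is the computation in Proposition~\ref{prop:necessary_B_aff}(4) with $J_+\ni j$). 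For (C1): if $|\la_j|<\tfrac12$ but $|d_j|>\tfrac12$, the single-index estimate~(\ref{eqn:PECB1}) (specialised to $I=\{j\}$, and now combined with (C3) so that no sign mismatch occurs) yields $\la(\tau_x\sigma.\chi-\chi)<0$, again a contradiction. For (C2): likewise with $|\la_j|=\tfrac12$, $|d_j|>1$, using~(\ref{eqn:PECB2}). For (C4): Proposition~\ref{prop:necessary_B_aff}(3) tells us $(J,-|\la^{0}|,|\fr{\chi^{0}}|)$ satisfies the PEC for $\WW(A_J)$, and in fact the minimality hypothesis upgrades this to $\inf(-|\la^{0}|)\big(\WW(A_J).|\fr{\chi^{0}}|-|\fr{\chi^{0}}|\big)=0$: indeed the estimate~(\ref{eqn:PECB3}) shows $\la(\tau_x\sigma w\inv.\chi-\chi)\leq -|\la^{0}|\big(w\inv.|\fr{\chi^{0}}|-|\fr{\chi^{0}}|\big)$, so if the right-hand side were negative for some $w$ we would contradict minimality of $(J,\la,\chi)$. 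Then Proposition~\ref{prop:minimality} (for type $A_J$, with weight $-|\la^{0}|$) gives $|\fr{\chi^{0}}|\in C_{\min}(-|\la^{0}|,A_J)$, which unwinds to: for all $i,j$, $-|\la_i|<-|\la_j|\Rightarrow|\fr{d_i}|\geq|\fr{d_j}|$, i.e. $|\la_i|<|\la_j|\Rightarrow|\fr{d_i}|\leq|\fr{d_j}|$ — this is exactly (C4). Hence $\chi^{0}\in C_{\min}(\la^{0},C^{(1)}_J)$, completing the proof. $\qed$

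(Note: replace the two $itemize\text{-replacement}$ tags above with nothing — the list items are written inline as prose; I include no \begin{itemize} environment in the final text.)
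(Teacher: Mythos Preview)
Your proposal is correct and follows essentially the same approach as the paper. For $(2)\Rightarrow(1)$ the paper carries out exactly the computation you outline: after reducing to $\sum_j(|\la_j|-|\fr{d_{w(j)}}|)^2\geq\sum_j(|\la_j|-|d_j|)^2$ via Lemma~\ref{lemma:fraction_part}(1) and (C3), it uses the rearrangement inequality (your (C4) step) and then factors the remaining difference as $\tfrac12\sum_j(|\fr{d_j}|-|d_j|)(|\fr{d_j}|+|d_j|-2|\la_j|)$, each term of which vanishes by (C1)--(C2); this is precisely the ``bookkeeping'' you anticipated. For $(1)\Rightarrow(2)$ the paper proceeds exactly as you do, using the single-index specialisations of (\ref{eqn:PECB1}), (\ref{eqn:PECB2}), (\ref{eqn:PECB3}); the only cosmetic differences are that the paper obtains (C3) by invoking Proposition~\ref{prop:minimality} for $B_J$ rather than your direct sign-flip, and obtains (C4) directly from (\ref{eqn:PECB3}) applied to a single transposition rather than passing through Proposition~\ref{prop:minimality} for $A_J$.
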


\begin{proof}
(1)$\Rightarrow$(2):
Assume first that $\inf\la\big(\widehat{\WW}(C^{(1)}_J).\chi-\chi\big)=0$. We have to prove that $\chi^{0}=(d_j)_{j\in J}$ satisfies the four conditions (C1)--(C4) from Definition~\ref{definition:min_sets_affine}. 
Since $\WW(B_J)\subseteq \widehat{\WW}(C^{(1)}_J)$, it follows from Proposition~\ref{prop:minimality} that $\chi^{0}\in C_{\min}(\la^{0},B_J)$, so that (C3) is satisfied. To check (C1), let $j\in J$ be such that $|\la_j|<1/2$ and assume for a contradiction that $|d_j|>1/2$. Fix some $\epsilon>0$ such that $|\la_j|<1/2-\epsilon$. It then follows from (\ref{eqn:PECB1}) that 
$$\la(\tau_x\sigma.\chi-\chi) \leq -\epsilon(|d_j|-1/2)<0$$
for some suitable $\sigma\in\{\pm 1\}^{(J)}$ and $x\in \TTT(C^{(1)}_J)$ with support in $I:=\{j\}$, contradicting (1). Similarly, to check (C2), let $j\in J$ be such that $|\la_j|=1/2$ and assume for a contradiction that $|d_j|>1$. It then follows from (\ref{eqn:PECB2}) that 
$$\la(\tau_x\sigma.\chi-\chi) \leq -\frac{1}{4}(|d_j|-1)<0$$
for some suitable $\sigma\in\{\pm 1\}^{(J)}$ and $x\in \TTT(C^{(1)}_J)$ with support in $I:=\{j\}$, again contradicting (1). Finally, to check (C4), let $i,j\in J$ with $i\neq j$ and let $w\in S_{(J)}$ be the transposition exchanging $i$ and $j$. It then follows from (\ref{eqn:PECB3}) that
$$\la(\tau_x\sigma w\inv.\chi-\chi)\leq |\la_i|\cdot (|\fr{d_i}|-|\fr{d_{w(i)}}|)+|\la_j|\cdot (|\fr{d_j}|-|\fr{d_{w(j)}}|)=(|\la_i|-|\la_j|)(|\fr{d_i}|-|\fr{d_{j}}|)$$
for some suitable $\sigma\in\{\pm 1\}^{(J)}$ and $x\in \TTT(C^{(1)}_J)$ with support in $I:=\{i,j\}$, yielding the claim.

(2)$\Rightarrow$(1):
Assume next that $\chi^{0}\in C_{\min}(\la^{0},C^{(1)}_J)$. 
Let $\widehat{w}=\tau_x\sigma w\inv\in\widehat{\WW}(C^{(1)}_J)$, for some $w\in S_{(J)}$, some $\sigma=(\sigma_j)_{j\in J}\in\{\pm 1\}^{(J)}$ and some $x=\sum_{j\in J}{n_je_j}\in\TTT(C^{(1)}_J)$. Let $I$ be some finite subset of $J$ containing the supports of $x$, $\sigma$ and $w$. 
It follows from (\ref{eqn:second_eqn}) that
\begin{equation*}
\begin{aligned}
\la(\widehat{w} .\chi-\chi) & =\frac{1}{2}\sum_{j\in I}{\Big((n_j+\la_j+\sigma_jd_{w(j)})^2-(\la_j+d_j)^2\Big)}\\
& \geq \frac{1}{2}\sum_{j\in I}{\Big(\fr{\la_j+\sigma_jd_{w(j)}}^2-(\la_j+d_j)^2\Big)}\\
& = \frac{1}{2}\sum_{j\in I}{\Big(\fr{\la_j+\sigma_j\fr{d_{w(j)}}}^2-(\la_j+d_j)^2\Big)}\\
& \geq \frac{1}{2}\sum_{j\in I}{\Big(\big(\big|\la_j\big|-\big|\fr{d_{w(j)}}\big|\big)^2-(\la_j+d_j)^2\Big)}\qquad \qquad \mbox{(Lemma~\ref{lemma:fraction_part}(1))} \\
&= \frac{1}{2}\sum_{j\in I}{\Big(\big(\big|\la_j\big|-\big|\fr{d_{w(j)}}\big|\big)^2-(|\la_j|-|d_j|)^2\Big)}\qquad \qquad \mbox{by (C3).} 
\end{aligned}
\end{equation*}
On the other hand, the rearrangement inequality (see \cite[Lemma~2.2]{PEClocfin}) and (C4) imply that 
$$\sum_{j\in I}{|\la_j|\cdot\big(-\big|\fr{d_{w(j)}}\big|+\big|\fr{d_j}\big|\big)}\geq 0.$$
Hence
\begin{equation*}
\begin{aligned}
\la(\widehat{w} .\chi-\chi) & \geq \frac{1}{2}\sum_{j\in I}{\Big(\big(\big|\la_j\big|-\big|\fr{d_{w(j)}}\big|\big)^2-(|\la_j|-|d_j|)^2\Big)}\\
&= \frac{1}{2}\sum_{j\in I}{\big(\big|\fr{d_{j}}\big|^2-\big|d_{j}\big|^2\big)}+\sum_{j\in I}{|\la_j|\cdot\big(-\big|\fr{d_{w(j)}}\big|+\big|d_j\big|\big)}\\
&\geq \frac{1}{2}\sum_{j\in I}{\big(\big|\fr{d_{j}}\big|^2-\big|d_{j}\big|^2\big)}+\sum_{j\in I}{|\la_j|\cdot\big(\big|d_{j}\big|-\big|\fr{d_{j}}\big|\big)}\\
&=\frac{1}{2}\sum_{j\in I}{\big(\big|\fr{d_{j}}\big|-\big|d_{j}\big|\big)\big(\big|\fr{d_{j}}\big|+\big|d_{j}\big|-2|\la_j|\big)}.
\end{aligned}
\end{equation*}
Let now $j\in I$. If $|\la_j|<1/2$, then $|d_j|\leq 1/2$ by (C1). On the other hand, if $|\la_j|=1/2$, then $|d_j|\leq 1$ by (C2). Hence either $|d_j|\leq 1/2$, in which case $\big|\fr{d_{j}}\big|-\big|d_{j}\big|=0$, or else $1/2<|d_j|\leq 1$ and $|\la_j|=1/2$, in which case 
$$\big|\fr{d_{j}}\big|+\big|d_{j}\big|-2|\la_j|=\big|\fr{d_{j}}\big|+\big|d_{j}\big|-1=0.$$
Thus $\big(\big|\fr{d_{j}}\big|-\big|d_{j}\big|\big)\big(\big|\fr{d_{j}}\big|+\big|d_{j}\big|-2|\la_j|\big)=0$ for all $j\in I$, so that 
$$\la(\widehat{w} .\chi-\chi)\geq 0.$$
Since $\widehat{w}\in\widehat{\WW}(C^{(1)}_J)$ was arbitrary, this concludes the proof of the proposition.
\end{proof}

To describe the triples $(J,\la,\chi)$ of minimal energy for $\widehat{\WW}(A^{(1)}_J)$, we could proceed as in the proof of Proposition~\ref{prop:minset_B_aff}. However, it will be easier to start from the description provided by \cite{convexhull}, which we now recall.

By \cite[Theorem~3.5(i)]{convexhull}, the set $\widehat{\WW}.\la-\la$ is contained in the cone $-C_{\la}$, where $$C_{\la}:=\mathrm{cone}\{\gamma\in\widehat{\Delta} \ | \ \la(\check{\gamma})>0\}.$$ Conversely, if $\gamma\in \widehat{\Delta}\cap C_{\la}$, then $-\gamma\in \RR_+(r_{\gamma}(\la)-\la)\subseteq \mathrm{cone}\{\widehat{\WW}.\la-\la\}$ because $r_{\gamma}(\la)=\la-\la(\check{\gamma})\gamma$. This shows that $\mathrm{cone}\{\widehat{\WW}.\la-\la\}=-C_{\la}$. In particular, the triple $(J,\la,\chi)$ is of minimal energy for $\widehat{\WW}$ if and only if 
\begin{equation}\label{eqn:mincond}
\la(\check{\gamma})>0\implies \gamma(\chi)\leq 0 \quad\textrm{for all $\gamma\in\widehat{\Delta}$.}
\end{equation}

\begin{prop}\label{prop:minset_A_aff}
Assume that $\la^{0}=\fr{\la^{0}}$. Then the following are equivalent:
\begin{enumerate}
\item
$\inf\la\big(\widehat{\WW}(A^{(1)}_J).\chi-\chi\big)=0$.
\item
$\chi^{0}\in C_{\min}(\la^{0},A^{(1)}_J)$.
\end{enumerate}
\end{prop}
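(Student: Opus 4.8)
The plan is to prove both implications by exploiting the characterisation \eqref{eqn:mincond}, which says that $(J,\la,\chi)$ is of minimal energy for $\widehat{\WW}(A_J^{(1)})$ if and only if $\gamma(\chi)\le 0$ for every $\gamma\in\widehat{\Delta}(A_J^{(1)})=A_J\times\ZZ$ with $\la(\check\gamma)>0$. Concretely, for $\gamma=(\epsilon_i-\epsilon_j,n)$ one has $\check\gamma=(-n,e_i-e_j,0)$, so under the normalisation $\la=(1,\la^0,0)$ we get $\la(\check\gamma)=-n+\la_i-\la_j$, while $\gamma(\chi)=(0,\epsilon_i-\epsilon_j,n)(0,\chi^0,1)=d_i-d_j+n$. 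Thus \eqref{eqn:mincond} becomes: for all $i\ne j$ in $J$ and all $n\in\ZZ$,
\begin{equation*}
-n+\la_i-\la_j>0\ \Longrightarrow\ d_i-d_j+n\le 0.
\end{equation*}
Since $\la^0=\fr{\la^0}$, we have $\la_i-\la_j\in(-1,1)$, so the hypothesis $-n+\la_i-\la_j>0$ holds precisely for $n\le 0$ when $\la_i\ge\la_j$, and for $n\le -1$ when $\la_i<\la_j$ (using the sign conventions on $\fr{\cdot}$; the boundary case $\la_i-\la_j=0$ needs a line of care, but then the relevant $n$ are exactly $n\le 0$). The most restrictive conclusion is obtained at the largest admissible $n$.

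First I would unwind the two cases. If $\la_i<\la_j$, the largest admissible $n$ is $n=-1$, giving $d_i-d_j-1\le 0$, i.e. $d_i-d_j\le 1$; running the same with $i,j$ swapped (where now $\la_j<\la_i$ forces the largest admissible $n=0$) gives $d_j-d_i\le 0$, i.e. $d_i\ge d_j$. If $\la_i=\la_j$, the largest admissible $n$ is $n=0$ in both orderings, yielding $d_i-d_j\le 0$ and $d_j-d_i\le 0$, hence $d_i=d_j$. So \eqref{eqn:mincond} for type $A_J^{(1)}$ is equivalent to the conjunction of: (a) $\la_i<\la_j\Rightarrow d_i\ge d_j$ for all $i,j$, (b) $\la_i=\la_j\Rightarrow d_i=d_j$, and (c) for every $i,j$ the inequality $d_i-d_j\le 1$ — but (c) combined with (a),(b) says exactly that $\sup_j d_j-\inf_j d_j\le 1$. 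Now I claim this is equivalent to $\chi^0\in C_{\min}(\la^0,A_J^{(1)})=\RR\cdot\mathbf 1+\{(d_j):|d_j|\le\tfrac12,\ \la_i<\la_j\Rightarrow d_i\ge d_j\}$. Indeed, writing $\chi^0=c\mathbf 1+\chi'$ with $|\chi'_j|\le\tfrac12$ has no effect on monotonicity conditions (a), (b are preserved since they only see differences $d_i-d_j=\chi'_i-\chi'_j$), and it forces $\sup-\inf\le 1$; conversely, if $\chi^0$ satisfies (a),(b),(c), set $c=\tfrac12(\sup_j d_j+\inf_j d_j)$ (finite since $D(J)$ is bounded once $(J,\la,\chi)$ is of minimal, hence positive, energy — or directly from (c)) and check $|d_j-c|\le\tfrac12$ for all $j$, so $\chi^0-c\mathbf 1$ lies in the claimed set. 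Membership in $C_{\min}$ is manifestly translation-invariant in the $\mathbf 1$-direction, so there is nothing lost.

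The one genuine subtlety — and the place I expect to spend the most care — is the boundary behaviour of the fractional part when $\la_i-\la_j=\pm\tfrac12$ or $=0$, i.e. matching up the strict/non-strict inequalities in $\la(\check\gamma)>0$ with the half-open interval $[-\tfrac12,\tfrac12)$ in which $\la^0$ lives, and making sure the translation-invariance argument (Lemma~\ref{lemma:translation_invariance1}) is not secretly needed here. I would handle this by working out the case analysis on $\la_i-\la_j\in(-1,1)$ explicitly once and for all, noting that $\la(\check\gamma)=-n+(\la_i-\la_j)>0$ with $n\in\ZZ$ is equivalent to $n\le\lceil\la_i-\la_j\rceil-1$ when $\la_i-\la_j\notin\ZZ$ and to $n\le -1$ — no wait, to $n\le \la_i-\la_j - 1$ rounded, which for $\la_i=\la_j$ means $n\le -1$... so in fact the $\la_i=\la_j$ case gives largest $n=-1$, hence $d_i-d_j\le 1$ only, symmetric, consistent with (b) only if we also invoke that equality of $\la$ gives no monotonicity — I would double-check this against the finite-rank Proposition~\ref{prop:minimality}. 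Once the translation table between \eqref{eqn:mincond} and conditions (a)--(c) is nailed down, the equivalence with $C_{\min}(\la^0,A_J^{(1)})$ is a short and purely formal manipulation, so the proof should be compact.
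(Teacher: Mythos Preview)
Your approach is correct and is essentially the paper's own proof: both reduce \eqref{eqn:mincond} for $\gamma=(0,\epsilon_i-\epsilon_j,n)$ to the pair of conditions $\la_i<\la_j\Rightarrow d_i\ge d_j$ (from $n=0$) and $d_i-d_j\le 1$ for all $i\ne j$ (from $n\le -1$), and then identify these with $C_{\min}(\la^0,A_J^{(1)})$ via the shift $c=\tfrac12(\sup d_j+\inf d_j)$.

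The only point to clean up is the one you yourself flag at the end: condition (b) is spurious and should be dropped entirely. When $\la_i=\la_j$ the hypothesis $\la_i-\la_j>n$ forces $n\le -1$ (not $n\le 0$), so you obtain only $|d_i-d_j|\le 1$, not $d_i=d_j$; correspondingly, $C_{\min}(\la^0,A_J^{(1)})$ imposes no constraint linking $d_i$ and $d_j$ on a level set of $\la^0$ beyond the global width bound. Once (b) is removed, your conditions (a) and (c) match the paper's \eqref{eqn:condA1} and \eqref{eqn:condA2} verbatim, and the rest of your argument goes through without change.
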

\begin{proof}
Set $\widehat{\WW}=\widehat{\WW}(A^{(1)}_J)$ and let $\widehat{\Delta}$ be the corresponding root system of type $A^{(1)}_J$. Note that the coroot of $(0,\epsilon_j-\epsilon_k,n)\in\widehat{\Delta}$ is given by $(-n,e_j-e_k,0)$. By (\ref{eqn:mincond}), the first statement of the proposition is thus equivalent to the condition
\begin{equation}\label{eqn:condA}
\la_j-\la_k>n\implies d_j-d_k\leq -n \quad\textrm{for all distinct $j,k\in J$ and $n\in\ZZ$.}
\end{equation}
Since $-\tfrac{1}{2}\leq \la_j<\tfrac{1}{2}$ for all $j\in J$ by assumption, 
condition (\ref{eqn:condA}) is empty for $n\geq 1$. For $n=0$, we get
\begin{equation}\label{eqn:condA1}
\la_j>\la_k\implies d_j\leq d_k \quad\textrm{for all distinct $j,k\in J$.}
\end{equation}
Similarly, for $n\leq -1$, we get
\begin{equation}\label{eqn:condA2}
d_j-d_k\leq 1 \quad\textrm{for all distinct $j,k\in J$.}
\end{equation}
In turn, the conditions (\ref{eqn:condA1}) and (\ref{eqn:condA2}) are satisfied if and only if $\chi^{0}\in C_{\min}(\la^{0},A^{(1)}_J)$, as desired.
\end{proof}

We now describe the minimal energy sets for the other types.

\begin{definition}
Let $\la^0\in\RR^J$ with $\la^0=\fr{\la^0}$. For $X\in\{B^{(1)}_J,D_J^{(1)},B_J^{(2)},C_J^{(2)},BC_J^{(2)}\}$, we denote by $C_{\min}(\la^{0},X)$ the set of tuples $\chi^0\in\RR^J$ such that $\inf\la\big(\widehat{\WW}(X).\chi-\chi\big)=0$, where $\la=(1,\la^0,0)$ and $\chi=(0,\chi^0,1)$. Note that, in view of Propositions~\ref{prop:minset_B_aff} and \ref{prop:minset_A_aff}, this definition is coherent with the corresponding notation for $X\in \{A_J^{(1)},C^{(1)}_J\}$.
\end{definition}

\begin{lemma}\label{lemma:minset_D_aff}
Assume that $\la^{0}=\fr{\la^{0}}$. Then $\chi^0\in C_{\min}(\la^{0},D_J^{(1)})$ if and only if $\chi^0\in C_{\min}(\la^{0},D_J)$ and the following conditions hold:
\begin{enumerate}
\item[(D1)] For all distinct $j,k\in J$ : $-1\leq d_j-d_k\leq 1$. 
\item[(D2)] For all distinct $j,k\in J$ : $-1\leq d_j+d_k\leq 2$.
\item[(D3)] For all distinct $j,k\in J$ : $d_j+d_k\leq 1$ or $\la_j=\la_k=-\tfrac{1}{2}$.
\end{enumerate}
\end{lemma}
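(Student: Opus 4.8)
The plan is to mimic the criterion~\eqref{eqn:mincond}: the triple $(J,\la,\chi)$ is of minimal energy for $\widehat{\WW}(D^{(1)}_J)$ if and only if $\gamma(\chi)\leq 0$ for every $\gamma\in\widehat{\Delta}$ with $\la(\check\gamma)>0$, where $\widehat{\Delta}$ is of type $D^{(1)}_J=D_J\times\ZZ$. The roots are $(0,\pm(\epsilon_j\pm\epsilon_k),n)$ for distinct $j,k\in J$ and $n\in\ZZ$, with coroot $(-n,\pm(e_j\pm e_k),0)$ (since all roots of $D_J$ have squared length $2$). So the condition to analyse is, for all distinct $j,k\in J$, all sign choices $\eta,\eta'\in\{\pm1\}$, and all $n\in\ZZ$:
\begin{equation*}
\eta\la_j+\eta'\la_k>n \ \Longrightarrow\ \eta d_j+\eta' d_k\leq -n.
\end{equation*}
First I would split according to the two $S_{(J)}$-types of root, $\epsilon_j-\epsilon_k$ and $\epsilon_j+\epsilon_k$. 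For $\epsilon_j-\epsilon_k$ the analysis is exactly the one carried out in the proof of Proposition~\ref{prop:minset_A_aff}: using $-\tfrac12\le\la_j<\tfrac12$, the cases $n\geq1$ and $n\leq-1$ force $-1\le d_j-d_k\le 1$ (this is (D1)), while $n=0$ gives the order condition $\la_j>\la_k\Rightarrow d_j\le d_k$. For $\epsilon_j+\epsilon_k$ one runs the same bookkeeping with $\la_j+\la_k$ in place of $\la_j-\la_k$, noting that $\la_j+\la_k\in[-1,1)$ with the value $-1$ attained exactly when $\la_j=\la_k=-\tfrac12$: the cases $n\geq2$ and $n\leq-2$ are vacuous; $n=1$ yields $d_j+d_k\le-1$ only when $\la_j+\la_k>1$, which never happens, so $n=1$ is vacuous too; $n=-1$ gives $d_j+d_k\le 1$, which is forced unless $\la_j+\la_k\le-1$, i.e.\ unless $\la_j=\la_k=-\tfrac12$ — this is precisely (D3); and $n=0$ gives $d_j+d_k\le 2$ with no constraint (always true once (D2) holds) together with the reflected order condition $\la_j+\la_k>0\Rightarrow d_j\le -d_k$. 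Combining the $n=-1$ constraint from $\epsilon_j-\epsilon_k$ with the $n=-1,0$ constraints from $\epsilon_j+\epsilon_k$ (applied to both $(j,k)$ and, e.g., $j$ and some third index to get the lower bound $-1\le d_j+d_k$) yields (D2).

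The remaining point is to identify the collection of order conditions ``$\la_j>\la_k\Rightarrow d_j\le d_k$'' (from $n=0$, type $\epsilon_j-\epsilon_k$) together with ``$\la_j+\la_k>0\Rightarrow d_j\le-d_k$'' and its sign variants (from $n=0$, type $\epsilon_j+\epsilon_k$, i.e.\ from the finite Weyl group $\WW(D_J)$) with exactly the membership $\chi^0\in C_{\min}(\la^0,D_J)$. This is immediate from Proposition~\ref{prop:minimality} combined with the chain of inclusions recorded after it, $C_{\min}(\la^0,D_J)$ being the set characterising $\inf\la^0(\WW(D_J).\chi^0-\chi^0)=0$; alternatively one invokes the definition of $C_{\min}(\la^0,D_J)$ directly. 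Thus conditions (D1), (D2), (D3), plus $\chi^0\in C_{\min}(\la^0,D_J)$, together are equivalent to~\eqref{eqn:mincond} for $\widehat{\WW}(D^{(1)}_J)$, which is what we wanted.

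The only mildly delicate step is the careful case split for the $\epsilon_j+\epsilon_k$ roots: one must keep track of when $n=-1$ is genuinely active — exactly when $\la_j+\la_k\le -1$, equivalently $\la_j=\la_k=-\tfrac12$, since $\la^0=\fr{\la^0}$ forces each $\la_j\in[-\tfrac12,\tfrac12)$ — and verify that $n=1$ never contributes and that $n=0$ on $\epsilon_j+\epsilon_k$ contributes only sign-twisted order relations already subsumed in $C_{\min}(\la^0,D_J)$ (not a new bound on $d_j+d_k$, since the coroot of $(0,\epsilon_j+\epsilon_k,0)$ pairs against $\chi$ to give $d_j+d_k$, whose positivity is controlled by $\la_j+\la_k>0$). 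I would also double-check that taking the lower bound $-1\le d_j+d_k$ in (D2) really follows: applying the $(n=-1)$-free part of the $\epsilon_j+\epsilon_k$ analysis to the root $-(\epsilon_j+\epsilon_k)$ reverses the inequality, giving $-(d_j+d_k)\le 1$ whenever $-(\la_j+\la_k)>0$ fails appropriately; the genuinely unconditional bound $-1\le d_j+d_k$ instead comes from the root $(0,\epsilon_j+\epsilon_k,-1)$ having coroot $(1,e_j+e_k,0)$ with $\la$-value $\la_j+\la_k-1<0\le$ hmm — here one uses $(0,-(\epsilon_j+\epsilon_k),1)$ whose coroot is $(-1,-(e_j+e_k),0)$, $\la$-value $-(\la_j+\la_k)+1>0$, forcing $-(d_j+d_k)-1\le 0$. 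This last bit of sign-chasing is where I expect to spend the most care, but it is entirely routine once the coroot formula $(\alpha,n)^\vee=\big(\tfrac{-2n}{(\alpha,\alpha)},\check\alpha,0\big)$ is applied consistently.
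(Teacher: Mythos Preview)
Your approach is exactly the paper's: apply criterion~\eqref{eqn:mincond} and run through the roots of $D_J^{(1)}$ split by type and by $n$. However, your bookkeeping for the $\epsilon_j+\epsilon_k$ roots contains two slips that leave (D2) unaccounted for.

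First, the case $n\leq -2$ is \emph{not} vacuous. For $\gamma=(0,\epsilon_j+\epsilon_k,-2)$ the antecedent $\la_j+\la_k>-2$ is always satisfied (since $\la_j+\la_k\geq -1$), so the consequent $d_j+d_k\leq 2$ is a genuine constraint --- and this is precisely where the upper bound in (D2) comes from. It is \emph{not} a by-product of $n=0$: the $n=0$ condition is $\la_j+\la_k>0\Rightarrow d_j+d_k\leq 0$, which is one of the order relations absorbed into $C_{\min}(\la^0,D_J)$, not an unconditional bound. The upper bound $d_j+d_k\leq 2$ is only needed when $\la_j=\la_k=-\tfrac12$ (otherwise (D3) already gives $\leq 1$), and in that situation $n=-1$ is inactive, so you cannot dispense with $n=-2$.

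Second, in your final paragraph the root you want for the lower bound $d_j+d_k\geq -1$ is $(0,-(\epsilon_j+\epsilon_k),-1)$, not $(0,-(\epsilon_j+\epsilon_k),1)$. With $n=1$ the coroot is $(-1,-(e_j+e_k),0)$ and its $\la$-value is $-1-(\la_j+\la_k)\leq 0$, so that root never contributes; with $n=-1$ the coroot is $(1,-(e_j+e_k),0)$, $\la$-value $1-(\la_j+\la_k)>0$ always, forcing $-(d_j+d_k)-1\leq 0$. (Equivalently: in your displayed condition with the minus sign and $n=-1$, the hypothesis $-(\la_j+\la_k)>-1$ holds since $\la_j+\la_k<1$.) Once these two points are corrected, your case analysis matches the paper's proof verbatim: $n=0$ for both root types gives $\chi^0\in C_{\min}(\la^0,D_J)$, $n\neq 0$ for $\epsilon_j-\epsilon_k$ gives (D1), and $n\leq -1$ for $\pm(\epsilon_j+\epsilon_k)$ gives (D2) and (D3).
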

\begin{proof}
The condition (\ref{eqn:mincond}) for the roots of the form $\gamma=(0,\pm(\epsilon_j+\epsilon_k),n)$ with $j\neq k$ can be rewritten as
\begin{equation}\label{eqn:condD}
\pm(\la_j+\la_k)>n\implies \pm(d_j+d_k)\leq -n \quad\textrm{for all distinct $j,k\in J$ and $n\in\ZZ$,}
\end{equation}
so that $C_{\min}(\la^{0},D_J^{(1)})$ is characterised by the conditions (\ref{eqn:condA}) and (\ref{eqn:condD}).
For $n=0$, the conditions (\ref{eqn:condA}) and (\ref{eqn:condD}) amount to $\chi^0\in C_{\min}(\la^{0},D_J)$.
Since $-\tfrac{1}{2}\leq \la_j<\tfrac{1}{2}$ for all $j\in J$ by assumption, the condition (\ref{eqn:condD}) is empty for $n\geq 1$, and equivalent to
(D2) and (D3) for $n\leq -1$. Finally, the condition (\ref{eqn:condA}) for $n\neq 0$ amounts to (D1). This concludes the proof of the lemma.
\end{proof}

For an explicit description of the set $C_{\min}(\la^{0},D_J)$, we refer to \cite[Remark~5.9]{PEClocfin}.

\begin{lemma}
Assume that $\la^{0}=\fr{\la^{0}}$. Then $\chi^0\in C_{\min}(\la^{0},B_J^{(1)})$ if and only if one of the following holds:
\begin{enumerate}
\item
$\la_j=-1/2$ and $0\leq d_j\leq 1$ for all $j\in J$.
\item
There is some $i\in J$ with $\la_i\neq -\tfrac{1}{2}$, 
$\chi^0\in C_{\min}(\la^{0},B_J)$,  and (D1)--(D3) hold.
\end{enumerate}
\end{lemma}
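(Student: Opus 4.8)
The plan is to characterise membership in $C_{\min}(\la^{0},B_J^{(1)})$ directly through the criterion (\ref{eqn:mincond}), which states that $\chi^{0}\in C_{\min}(\la^{0},B_J^{(1)})$ if and only if $\la(\check\gamma)>0\implies\gamma(\chi)\leq 0$ for every root $\gamma$ of type $B_J^{(1)}$. First I would split these roots into the three families $(0,\pm(\epsilon_j-\epsilon_k),n)$, $(0,\pm(\epsilon_j+\epsilon_k),n)$ and $(0,\pm\epsilon_j,n)$. The union of the first two families is the root system $D_J^{(1)}$, so by the proof of Lemma~\ref{lemma:minset_D_aff} the conditions they impose for $n\neq 0$ amount precisely to (D1), (D2) and (D3) (the contributions at $n=0$ are collected in the next step). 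For the third family, the coroot of $(0,\pm\epsilon_j,n)$ is $(-2n,\pm 2e_j,0)$, so with $\la=(1,\la^{0},0)$ and $\chi=(0,\chi^{0},1)$ the imposed conditions read
\[
\la_j>n\implies d_j\leq -n\quad\textrm{and}\quad -\la_j>n\implies d_j\geq n\qquad(j\in J,\ n\in\ZZ).
\]
Since $-\tfrac12\leq\la_j<\tfrac12$ by hypothesis, a short case distinction on the sign of $\la_j$ (the value $\la_j=-\tfrac12$ being slightly special) shows that the $n=0$ part of these conditions is $\la_jd_j\leq 0$ for all $j$, while the $n\neq 0$ part is $-1\leq d_j\leq 1$ for all $j$.

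The level-$0$ conditions coming from the three families together are, on the other hand, equivalent to $\chi^{0}\in C_{\min}(\la^{0},B_J)$: indeed $\WW(B_J)\subseteq\widehat{\WW}(B_J^{(1)})$ acts on the middle coordinate only, so restricting the minimality condition for $\widehat{\WW}(B_J^{(1)})$ to $\WW(B_J)$ gives $\inf\la^{0}(\WW(B_J).\chi^{0}-\chi^{0})=0$, hence $\chi^{0}\in C_{\min}(\la^{0},B_J)$ by Proposition~\ref{prop:minimality}, while conversely the latter yields $\la^{0}(r_\alpha.\chi^{0}-\chi^{0})=-\alpha(\chi^{0})\la^{0}(\check\alpha)\geq 0$ for all $\alpha\in B_J$, which is exactly the level-$0$ condition. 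Combining the two steps,
\[
\chi^{0}\in C_{\min}(\la^{0},B_J^{(1)})\iff\chi^{0}\in C_{\min}(\la^{0},B_J),\ \textrm{(D1)--(D3) hold, and }|d_j|\leq 1\ \textrm{for all }j\in J.
\]
It then remains to reconcile the right-hand side with the dichotomy (1)/(2), which are mutually exclusive so that the disjunction is unambiguous. If $\la_j=-\tfrac12$ for every $j$, the $|\la|$-sorting clause of $C_{\min}(\la^{0},B_J)$ is empty, so $C_{\min}(\la^{0},B_J)=\{(d_j)_{j\in J}\mid d_j\geq 0\ \forall j\}$, (D3) is vacuous, and (D1), (D2) follow at once from $0\leq d_j\leq 1$; hence the right-hand side collapses to $0\leq d_j\leq 1$ for all $j$, i.e.\ to (1). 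If instead $\la_i\neq-\tfrac12$ for some $i$, I claim the clause $|d_j|\leq 1$ becomes automatic, so that the right-hand side collapses to (2); this redundancy is the only genuine obstacle in the proof.

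To establish it, suppose $\chi^{0}\in C_{\min}(\la^{0},B_J)$ and (D1)--(D3) hold, and assume for contradiction that $d_j>1$ for some $j$; then $\la_j\leq 0$ since $\la_jd_j\leq 0$. Choose $k\neq j$ with $\la_j\neq-\tfrac12$ or $\la_k\neq-\tfrac12$ (take $k=i$ when $\la_j=-\tfrac12$, noting $i\neq j$ then; take any $k\neq j$ otherwise, using that $J$ is infinite). Then (D3) forces $d_j+d_k\leq 1$, hence $d_k<0$, hence $\la_k\geq 0$, and then (D1) forces $d_j-d_k\leq 1$, contradicting $d_j>1>0>d_k$. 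The case $d_j<-1$ is symmetric, using the lower bounds in (D2) and (D1) instead: here $\la_j\geq 0$, any $k\neq j$ satisfies $d_j+d_k\geq -1$ hence $d_k>0$ hence $\la_k\leq 0$, and then $d_j-d_k\geq -1$ forces $d_j>-1$. Thus $|d_j|\leq 1$ follows, completing the forward implication; the converse is then immediate, since (1), resp.\ (2), yields $C_{\min}(\la^{0},B_J)$, (D1)--(D3), and — by the same argument — $|d_j|\leq 1$, whence $\chi^{0}\in C_{\min}(\la^{0},B_J^{(1)})$.
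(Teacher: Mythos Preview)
Your proof is correct and follows the same route as the paper: characterise $C_{\min}(\la^{0},B_J^{(1)})$ via the root-by-root criterion~(\ref{eqn:mincond}), identify the $n=0$ part with $C_{\min}(\la^{0},B_J)$, the $n\neq 0$ part of the $D_J^{(1)}$-roots with (D1)--(D3), and the $n\neq 0$ part of the short-root family with $|d_j|\leq 1$, then argue case by case on whether $\la^{0}\equiv -\tfrac12$.

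The only notable difference is in showing that $|d_j|\leq 1$ is redundant in case (2). Your contradiction argument works, but the paper's version is shorter: fix $i$ with $\la_i\neq -\tfrac12$; then for any $j$ (using any $k\neq i$ when $j=i$), (D1) gives $-1\leq d_j-d_i\leq 1$ and (D2)+(D3) give $-1\leq d_j+d_i\leq 1$, whence $-2\leq 2d_j\leq 2$. This avoids the sign case split and the detour through $\la_k d_k\leq 0$. Minor remarks: in your justification that the level-$0$ conditions are equivalent to $\chi^{0}\in C_{\min}(\la^{0},B_J)$, the forward direction you wrote invokes the \emph{full} minimality for $\widehat{\WW}(B_J^{(1)})$ rather than just the level-$0$ part --- it would be cleaner to note that the level-$0$ instance of~(\ref{eqn:mincond}) is literally the criterion for the finite Weyl group $\WW(B_J)$; and the parenthetical about $\la_j=-\tfrac12$ being ``slightly special'' is not actually needed, since the $n\neq 0$ analysis of the short roots gives $|d_j|\leq 1$ uniformly.
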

\begin{proof}
The condition (\ref{eqn:mincond}) for the roots of the form $\gamma=(0,\pm \epsilon_j,n)$ is equivalent to
\begin{equation}\label{eqn:condB}
\pm \la_j>n\implies \pm d_j\leq -n \quad\textrm{for all $j\in J$ and $n\in\ZZ$,}
\end{equation}
so that $C_{\min}(\la^{0},B_J^{(1)})$ is characterised by the conditions (\ref{eqn:condA}),  (\ref{eqn:condD}) and (\ref{eqn:condB}).
For $n=0$, these three conditions amount to $\chi^0\in C_{\min}(\la^{0},B_J)$. As we saw in the proof of Lemma~\ref{lemma:minset_D_aff}, the conditions (\ref{eqn:condA}) and (\ref{eqn:condD}) for $n\neq 0$ are equivalent to the conditions (D1)--(D3). Finally, since $-\tfrac{1}{2}\leq \la_j<\tfrac{1}{2}$ for all $j\in J$ by assumption, the condition (\ref{eqn:condB}) is empty for $n\geq 1$, and, for $n \leq -1$, it is equivalent to
\begin{equation}\label{eqn:lastcond}
|d_j|\leq 1\quad\textrm{for all $j\in J$}.
\end{equation}
Assume first that there is some $i\in J$ such that $\la_i\neq -\tfrac{1}{2}$. Then for any $j\in J$ the conditions (D1)--(D3) imply that $-1\leq d_j\pm d_i\leq 1$ and hence that $-2\leq 2d_j\leq 2$, so that (\ref{eqn:lastcond}) holds. Thus, in that case, $\chi^0\in C_{\min}(\la^{0},B_J^{(1)})$ if and only if (2) holds. Assume next that $\la_j=-1/2$ for all $j\in J$. Then $\chi^0\in C_{\min}(\la^{0},B_J)$ implies that $d_j\geq 0$ for all $j\in J$, while (\ref{eqn:lastcond}) implies that $d_j\leq 1$ for all $j\in J$. Conversely, if (1) holds, then it is easy to check that $\chi^0\in C_{\min}(\la^{0},B_J)$ and that the conditions (D1)--(D3) and (\ref{eqn:lastcond}) hold, as desired.
\end{proof}

\begin{lemma}
Assume that $\la^{0}=\fr{\la^{0}}$. Then $$C_{\min}(\la^{0},BC_J^{(2)})=C_{\min}(\la^{0},C^{(1)}_J)\quad\textrm{and}\quad C_{\min}(\la^{0},C^{(2)}_J)=C_{\min}(\la^{0},B_J^{(1)}).$$
\end{lemma}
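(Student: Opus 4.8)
The plan is to read off both identities directly from the minimal--energy criterion (\ref{eqn:mincond}): for a LARS $\widehat{\Delta}$ of type $X$ one has $\chi^{0}\in C_{\min}(\la^{0},X)$ exactly when $\la(\check{\gamma})>0$ forces $\gamma(\chi)\leq 0$ for every $\gamma\in\widehat{\Delta}$. Thus $C_{\min}(\la^{0},X)$ depends only on the \emph{collection of implications} ``$\la(\check{\gamma})>0\Rightarrow\gamma(\chi)\leq 0$'' indexed by $\gamma\in\widehat{\Delta}$, and to prove $C_{\min}(\la^{0},BC^{(2)}_J)=C_{\min}(\la^{0},C^{(1)}_J)$ and $C_{\min}(\la^{0},C^{(2)}_J)=C_{\min}(\la^{0},B^{(1)}_J)$ it suffices to check that in each case the two collections of implications coincide. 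First I would record, straight from the lists in \S\ref{section:preliminaries:affine}, at which levels the various roots occur: in each of $B^{(1)}_J$, $C^{(1)}_J$, $C^{(2)}_J$ and $BC^{(2)}_J$ the roots $(0,\pm(\epsilon_j\pm\epsilon_k),n)$ occur at \emph{all} $n\in\ZZ$ (for $C^{(2)}_J$ because $\pm(\epsilon_j\pm\epsilon_k)$ lies in both $C_J$ and $D_J$, and for $BC^{(2)}_J$ because it lies in both $B_J$ and $BC_J$), hence these contribute identical implications in all four cases; the only roots that behave differently are the multiples of a single $\epsilon_j$, and for these one has: $C^{(1)}_J$ carries $(0,\pm 2\epsilon_j,n)$ for all $n\in\ZZ$ and no $(0,\pm\epsilon_j,n)$; $BC^{(2)}_J$ carries $(0,\pm\epsilon_j,n)$ for all $n\in\ZZ$ and $(0,\pm 2\epsilon_j,n)$ for odd $n$; $C^{(2)}_J$ carries $(0,\pm2\epsilon_j,n)$ for even $n$ only and no $(0,\pm\epsilon_j,n)$; and $B^{(1)}_J$ carries $(0,\pm\epsilon_j,n)$ for all $n\in\ZZ$ and no $(0,\pm 2\epsilon_j,n)$.

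The crucial point is that the implication attached to a root by (\ref{eqn:mincond}) is invariant under positive rescaling of that root: if $c>0$ then $(c\alpha,cn)^{\vee}=\tfrac{1}{c}(\alpha,n)^{\vee}$ (immediate from $(\alpha,n)^{\vee}=(\tfrac{-2n}{(\alpha,\alpha)},\check{\alpha},0)$ and bilinearity of $\kappa$), so $\la((c\alpha,cn)^{\vee})>0\iff\la((\alpha,n)^{\vee})>0$, while $(0,c\alpha,cn)(\chi)=c\cdot(0,\alpha,n)(\chi)$, so $(0,c\alpha,cn)(\chi)\leq 0\iff(0,\alpha,n)(\chi)\leq 0$. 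In particular, for each $j\in J$ and $m\in\ZZ$ the root $(0,2\epsilon_j,2m)=2\cdot(0,\epsilon_j,m)$ produces exactly the same implication as $(0,\epsilon_j,m)$ --- namely ``$m<\la_j\Rightarrow d_j\leq -m$'' --- and likewise $(0,-2\epsilon_j,2m)$ and $(0,-\epsilon_j,m)$ both give ``$m<-\la_j\Rightarrow d_j\geq m$''. Splitting $\{(0,\pm 2\epsilon_j,n):n\in\ZZ\}$ according to the parity of $n$, its even part therefore contributes exactly the implications of $\{(0,\pm\epsilon_j,m):m\in\ZZ\}$; hence the implications coming from the $\epsilon_j$-multiples of $C^{(1)}_J$ agree with those coming from the $\epsilon_j$-multiples of $BC^{(2)}_J$ (the odd-level roots $(0,\pm2\epsilon_j,n)$ being common to both), and similarly the $\epsilon_j$-multiples of $C^{(2)}_J$, i.e.\ $\{(0,\pm2\epsilon_j,2m):m\in\ZZ\}$, contribute exactly the implications of the $\epsilon_j$-multiples of $B^{(1)}_J$, i.e.\ $\{(0,\pm\epsilon_j,m):m\in\ZZ\}$. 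Running this over all $j\in J$ and combining it with the identity of the $(\epsilon_j\pm\epsilon_k)$-implications noted above, the two collections of implications coincide in each case, and (\ref{eqn:mincond}) yields the two claimed equalities.

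I do not expect a genuine obstacle here: the argument is essentially bookkeeping of which roots of each LARS sit at which level, together with the elementary ray-invariance observation above. The one subtlety worth flagging is precisely that (\ref{eqn:mincond}) compares \emph{implications} and not the raw quantities $\la(\check{\gamma})$ and $\gamma(\chi)$, which for $(0,2\epsilon_j,2m)$ and $(0,\epsilon_j,m)$ differ by a factor of $2$; conflating the two would make the statement look false. (Alternatively one could expand both sides into the level-by-level conditions (\ref{eqn:condA}), (\ref{eqn:condD}), (\ref{eqn:condB}) as in Lemma~\ref{lemma:minset_D_aff} and the lemma following it, and check them termwise, but the ray-invariance route is shorter.)
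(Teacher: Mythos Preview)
Your argument is correct, but it takes a longer route than the paper's. The paper's proof is a single line: since $C_{\min}(\la^{0},X)$ is by definition the set of $\chi^{0}$ with $\inf\la(\widehat{\WW}(X).\chi-\chi)=0$, it depends only on the Weyl group $\widehat{\WW}(X)$, and the equalities $\widehat{\WW}(BC^{(2)}_J)=\widehat{\WW}(C^{(1)}_J)$ and $\widehat{\WW}(B^{(1)}_J)=\widehat{\WW}(C^{(2)}_J)$ were already established in \S\ref{subsection:Weyl_group} (both pairs share the same $\WW(\Delta_0)=\{\pm 1\}^{(J)}\rtimes S_{(J)}$ and the same translation lattice $\TTT$).

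Your approach instead works on the dual side, via the characterisation~(\ref{eqn:mincond}), and matches up the root-by-root implications using the ray-invariance $(c\alpha,cn)^{\vee}=\tfrac{1}{c}(\alpha,n)^{\vee}$. This is a valid and self-contained argument --- and the ray-invariance observation is exactly the root-theoretic shadow of the fact that $r_{(2\epsilon_j,2m)}=r_{(\epsilon_j,m)}$, which is what makes the two Weyl groups coincide in the first place. The upside of your route is that it does not rely on the explicit Weyl-group computations of \S\ref{subsection:Weyl_group}; the downside is that it re-derives, at the level of individual implications, a consequence of a structural fact (equality of Weyl groups) that the paper already has in hand and that makes the lemma immediate.
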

\begin{proof}
This readily follows from the fact that $\widehat{\WW}(BC^{(2)}_J)=\widehat{\WW}(C^{(1)}_J)$ and $\widehat{\WW}(B^{(1)}_J)=\widehat{\WW}(C^{(2)}_J)$.
\end{proof}

\begin{lemma}\label{lemma:minset_B2_aff}
Let $\la=(1,\la^0,0)$ and $\chi=(0,\chi^0,1)$, and set $\la_2:=(1,\tfrac{\la^0}{2},0)$ and $\chi_2:=(0,\tfrac{\chi^0}{2},1)$. Then 
$$\inf \la\big(\widehat{\WW}(B^{(2)}_J).\chi-\chi\big)=4\cdot\inf\la_2\big(\widehat{\WW}(C^{(1)}_J).\chi_2-\chi_2\big).$$
In particular, if $\la^{0}=\fr{\la^{0}}$, then $C_{\min}(\la^{0},B^{(2)}_J)=2\cdot C_{\min}(\tfrac{\la^{0}}{2},C^{(1)}_J)$.
\end{lemma}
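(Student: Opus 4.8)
The plan is to exploit the ``scaling by $2$'' relation between $\widehat{\WW}(B^{(2)}_J)$ and $\widehat{\WW}(C^{(1)}_J)$ directly at the level of formula (\ref{eqn:second_eqn}). First I would recall from \S\ref{subsection:Weyl_group} the two semi-direct decompositions $\widehat{\WW}(B^{(2)}_J)=\tau(\TTT(B^{(2)}_J))\rtimes\WW(B_J)$ and $\widehat{\WW}(C^{(1)}_J)=\tau(\TTT(C^{(1)}_J))\rtimes\WW(C_J)$, observe that $\WW(C_J)=\WW(B_J)$ so the point-group factors coincide, and record the lattice identity $\TTT(B^{(2)}_J)=2\,\TTT(C^{(1)}_J)$ (the left-hand side being $2Q$, the right-hand side $Q$). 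Together these yield a bijection $\Phi\co\widehat{\WW}(C^{(1)}_J)\to\widehat{\WW}(B^{(2)}_J)$ sending $\tau_x\sigma w\inv$ to $\tau_{2x}\sigma w\inv$.

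The key step is the computation that, for $\widehat{v}=\tau_x\sigma w\inv\in\widehat{\WW}(C^{(1)}_J)$, one has $\la_2(\widehat{v}.\chi_2-\chi_2)=\tfrac14\,\la(\Phi(\widehat{v}).\chi-\chi)$. Both sides are evaluated via (\ref{eqn:second_eqn}), which applies since $\la,\chi,\la_2,\chi_2$ all satisfy Convention~\ref{convention:normalisation}; because the ``$\la_j$'' and ``$d_j$'' coordinates of $\la_2,\chi_2$ are exactly half of those of $\la,\chi$, each squared term in the sum defining $\la_2(\widehat{v}.\chi_2-\chi_2)$ picks up a global factor $\tfrac14$, and the replacement $x\mapsto 2x$ built into $\Phi$ is precisely what is needed so that $n_j$ is replaced by $2n_j$ inside the term $(n_j+\la_j+\sigma_jd_{w(j)})^2$, matching the expression for $\la(\Phi(\widehat{v}).\chi-\chi)$. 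Taking the infimum over $\widehat{v}$, equivalently over $\widehat{w}=\Phi(\widehat{v})$ ranging over all of $\widehat{\WW}(B^{(2)}_J)$, then gives the asserted identity of infima.

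For the final clause, $\la^0=\fr{\la^0}$ forces $|\la_j|\le\tfrac12$, hence $|\tfrac{\la_j}{2}|\le\tfrac14<\tfrac12$ and $\tfrac{\la^0}{2}=\fr{\tfrac{\la^0}{2}}$, so $C_{\min}(\tfrac{\la^0}{2},C^{(1)}_J)$ is defined; the displayed identity then immediately upgrades to $\inf\la(\widehat{\WW}(B^{(2)}_J).\chi-\chi)=0\iff\inf\la_2(\widehat{\WW}(C^{(1)}_J).\chi_2-\chi_2)=0$, that is, $\chi^0\in C_{\min}(\la^0,B^{(2)}_J)\iff\tfrac{\chi^0}{2}\in C_{\min}(\tfrac{\la^0}{2},C^{(1)}_J)$, which is the claimed equality $C_{\min}(\la^0,B^{(2)}_J)=2\cdot C_{\min}(\tfrac{\la^0}{2},C^{(1)}_J)$. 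I do not anticipate a genuine obstacle here: the only point requiring care is verifying that $\Phi$ is a bijection of the \emph{whole} Weyl groups, which comes down to the lattice identity $\TTT(B^{(2)}_J)=2\,\TTT(C^{(1)}_J)$ and the equality of point groups recorded above; everything else is the routine bookkeeping in (\ref{eqn:second_eqn}).
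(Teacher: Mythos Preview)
Your proposal is correct and follows essentially the same approach as the paper: identify the common point group $\WW(B_J)=\WW(C_J)$, use the lattice identity $\TTT(B^{(2)}_J)=2Q=2\,\TTT(C^{(1)}_J)$ to set up the bijection $\tau_x\sigma w^{-1}\mapsto\tau_{2x}\sigma w^{-1}$, and then compute via (\ref{eqn:second_eqn}) that each summand scales by $4$. The paper's proof is terser (it omits the discussion of the final clause, which it leaves implicit), but the argument is the same.
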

\begin{proof}
Let $\WW=S_{(J)}\ltimes \{\pm 1\}^{(J)}$, so that $\widehat{\WW}(C^{(1)}_J)=\WW\ltimes \tau(Q)$ and $\widehat{\WW}(B^{(2)}_J)=\WW\ltimes \tau(2Q)$. Then for any $x=\sum_{j\in J}{n_je_j}\in Q$, any $w\in S_{(J)}$ and any $\sigma=(\sigma_j)_{j\in J}\in \{\pm 1\}^{(J)}$, we get from (\ref{eqn:second_eqn}) that
\begin{equation*}
\begin{aligned}
\la(\tau_{2x}\sigma w\inv.\chi-\chi)
&=\frac{1}{2}\sum_{j\in J}{\Big((2n_j+\la_j+\sigma_jd_{w(j)})^2-(\la_j+d_j)^2\Big)}\\
&=2\sum_{j\in J}{\bigg(\Big(n_j+\frac{\la_j}{2}+\sigma_j\frac{d_{w(j)}}{2}\Big)^2-\Big(\frac{\la_j}{2}+\frac{d_j}{2}\Big)^2\bigg)}\\
&=4\cdot\la_2(\tau_{x}\sigma w\inv.\chi_2-\chi_2),
\end{aligned}
\end{equation*}
yielding the claim.
\end{proof}

Using Lemmas~\ref{lemma:SC0}, \ref{lemma:simpleobs2} and \ref{lemma:translation_invariance1}, we can now restate the results of this section for general $\la=(\la_c,\la^{0},\la_d)$ and $\chi=(\chi_c,\chi^{0},\chi_d)$ with $\la_c\chi_d\neq 0$.

\begin{definition} \label{def:5.10}
Let $\la=(\la_c,\la^0,\la_d)\in\RR\times\RR^J\times\RR$ with $\la_c\neq 0$. For $X=X_J^{(1)}$ or $X_J^{(2)}$ one of the seven standard types from \S\ref{section:preliminaries:affine}, we set
$$C_{\min}(\la,X):=\Big\{\chi=(\chi_c,\chi^0,\chi_d)\in\RR\times\RR^J\times\RR \ \big| \ \la_c\chi_d>0\quad\textrm{and}\quad \tfrac{\chi^0}{\chi_d}+\big[\tfrac{\la^0}{\la_c}\big]\in C_{\min}(\bfr{\tfrac{\la^0}{\la_c}},X)\Big\}.$$
\end{definition}

\begin{theorem}\label{thm:MINgen}
Let $\la=(\la_c,\la^0,\la_d)\in\RR\times\RR^J\times\RR$ with $\la_c\neq 0$, and let $X=X_J^{(1)}$ or $X_J^{(2)}$ be one of the seven standard types from \S\ref{section:preliminaries:affine}. Then for any $\chi=(\chi_c,\chi^0,\chi_d)\in\RR\times\RR^J\times\RR$ with $\chi_d\neq 0$, the following assertions are equivalent:
\begin{enumerate} 
\item
$\inf \la\big(\widehat{\WW}(X).\chi-\chi\big)=0$.
\item
$\chi\in C_{\min}(\la,X)$.
\end{enumerate}
\end{theorem}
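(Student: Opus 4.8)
The plan is to reduce the general statement to the normalised case $\la=(1,\la^0,0)$, $\chi=(0,\chi^0,1)$ that was treated throughout Section~\ref{Section:MES}, and to do so by chaining together the three rescaling/translation lemmas available to us. First I would dispose of the sign issue: if $\inf\la\big(\widehat{\WW}(X).\chi-\chi\big)=0$ then in particular this infimum is finite, so $(J,\la,\chi)$ satisfies the PEC for $\widehat{\WW}$; since $\chi_d\neq 0$ we are not in the degenerate case of \S\ref{section:trivial_cases}, and Lemma~\ref{lemma:SC0} forces $\la_c\chi_d>0$. Conversely, membership in $C_{\min}(\la,X)$ explicitly requires $\la_c\chi_d>0$ by Definition~\ref{def:5.10}. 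So in both directions we may assume $\la_c\chi_d>0$, which is the hypothesis needed to invoke Lemma~\ref{lemma:simpleobs2}.

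Next I would apply Lemma~\ref{lemma:simpleobs2} to pass from $(J,\la,\chi)$ to the standardised triple $(J,\la_{\mathrm{st}},\chi_{\mathrm{st}})=(J,(1,\tfrac{\la^0}{\la_c},0),(0,\tfrac{\chi^0}{\chi_d},1))$; the lemma gives the precise identity $\inf\la\big(\widehat{\WW}(X).\chi-\chi\big)=\la_c\chi_d\cdot\inf\la_{\mathrm{st}}\big(\widehat{\WW}(X).\chi_{\mathrm{st}}-\chi_{\mathrm{st}}\big)$, so (since $\la_c\chi_d>0$) the condition $\inf\la(\widehat{\WW}(X).\chi-\chi)=0$ is equivalent to $\inf\la_{\mathrm{st}}(\widehat{\WW}(X).\chi_{\mathrm{st}}-\chi_{\mathrm{st}})=0$. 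Now $\la_{\mathrm{st}}$ has the form $(1,\mu^0,0)$ with $\mu^0=\tfrac{\la^0}{\la_c}$, but $\mu^0$ need not satisfy $\mu^0=\fr{\mu^0}$. To remedy this I would apply Lemma~\ref{lemma:translation_invariance1} with $\mm=[\mu^0]=[\tfrac{\la^0}{\la_c}]\in\ZZ^J$ (noting, for the $B^{(2)}_J$ case, that the construction in Definition~\ref{def:5.10} is stated so as to land in the shifted lattice; I should check that $[\tfrac{\la^0}{\la_c}]$ meets the parity requirement there, or else phrase the $B^{(2)}$ reduction through Lemma~\ref{lemma:minset_B2_aff} which halves everything and moves the parity obstruction). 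By Remark~\ref{remark:Z-discrete_alt}, this replaces $\mu^0$ by $\fr{\mu^0}=\bfr{\tfrac{\la^0}{\la_c}}$ and $\chi_{\mathrm{st}}^0=\tfrac{\chi^0}{\chi_d}$ by $\tfrac{\chi^0}{\chi_d}+[\tfrac{\la^0}{\la_c}]$, and preserves the infimum. Thus $\inf\la(\widehat{\WW}(X).\chi-\chi)=0$ iff the triple with data $\big(1,\bfr{\tfrac{\la^0}{\la_c}},0\big)$ and $\big(0,\tfrac{\chi^0}{\chi_d}+[\tfrac{\la^0}{\la_c}],1\big)$ has minimal energy zero.

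At this point the hypothesis $\la^0=\fr{\la^0}$ of Propositions~\ref{prop:minset_B_aff}, \ref{prop:minset_A_aff} and the subsequent lemmas is satisfied, so by those results (covering all seven types $X$) the last condition is equivalent to $\tfrac{\chi^0}{\chi_d}+[\tfrac{\la^0}{\la_c}]\in C_{\min}\big(\bfr{\tfrac{\la^0}{\la_c}},X\big)$, which is exactly the defining condition of $\chi\in C_{\min}(\la,X)$ in Definition~\ref{def:5.10}. Chaining the equivalences completes the proof. The only real bookkeeping obstacle is the $B^{(2)}_J$ case: there Lemma~\ref{lemma:translation_invariance1} requires $\mm\in(2\ZZ)^J$, which $[\tfrac{\la^0}{\la_c}]$ need not satisfy, so I would instead route that case through Lemma~\ref{lemma:minset_B2_aff}, writing $\inf\la_{\mathrm{st}}\big(\widehat{\WW}(B^{(2)}_J).\chi_{\mathrm{st}}-\chi_{\mathrm{st}}\big)=4\inf\big((1,\tfrac{\mu^0}{2},0)\big(\widehat{\WW}(C^{(1)}_J).\,\cdot\,\big)\big)$, then translating in the $C^{(1)}_J$-lattice $Q$ by $[\tfrac{\mu^0}{2}]$ (no parity constraint), applying Proposition~\ref{prop:minset_B_aff}, and finally using the identity $C_{\min}(\la^0,B^{(2)}_J)=2\cdot C_{\min}(\tfrac{\la^0}{2},C^{(1)}_J)$ from Lemma~\ref{lemma:minset_B2_aff} to match Definition~\ref{def:5.10}; I should double-check that the floor of a half is handled consistently with the convention $[x]=\lceil x\rceil$ for $x\in\tfrac12+\ZZ$, but this only affects a measure-zero boundary and the cones are closed, so it causes no trouble.
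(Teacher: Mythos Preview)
Your proposal is correct and matches the paper's own (very terse) proof, which simply says ``Using Lemmas~\ref{lemma:SC0}, \ref{lemma:simpleobs2} and \ref{lemma:translation_invariance1}, we can now restate the results of this section for general $\la$ and $\chi$''; you have spelled out exactly that chain of reductions. You have in fact been more careful than the paper by flagging the parity obstruction in Lemma~\ref{lemma:translation_invariance1} for the $B^{(2)}_J$ case and routing it through Lemma~\ref{lemma:minset_B2_aff} instead.
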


\section{Characterisation of the PEC for \texorpdfstring{$\la$ $\ZZ$-discrete}{lambda Z-discrete}}\label{Section:COTPFLZD}
%\section{Characterisation of the PEC for $\la$ $\ZZ$-discrete}\label{Section:COTPFLZD}
We are now in a position to prove an analogue of Theorem~\ref{theorem:charact_PEC_locally_finite} for triples $(J,\la,\chi)$.

\begin{lemma}\label{lemma:invariance_summable_aff2}
Let $X\in\big\{A^{(1)}_J,C^{(1)}_J\big\}$. Let $\la^{0},\chi^{0}\in\RR^J$ and 
$(\chi^{0})'\in \ell^1(J)$. Set $\la=(1,\la^{0},0)$, $\chi=(0,\chi^{0},1)$ and $\chi'=(0,(\chi^{0})',0)$. Then $(J,\la,\chi)$ satisfies the PEC for $\widehat{\WW}(X)$ if and only if $(J,\la,\chi+\chi')$ satisfies the PEC for $\widehat{\WW}(X)$.
\end{lemma}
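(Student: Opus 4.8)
The statement asserts that the PEC for $\widehat{\WW}(X)$, $X\in\{A^{(1)}_J,C^{(1)}_J\}$, is insensitive to adding an $\ell^1$-perturbation $\chi'=(0,(\chi^0)',0)$ to $\chi=(0,\chi^0,1)$. The plan is to exploit the explicit formula \eqref{eqn:second_eqn}: for $\widehat{w}=\tau_x\sigma w\inv$ with $x=\sum_j n_je_j\in\TTT(X)$, $\sigma\in\{\pm 1\}^{(J)}$, $w\in S_{(J)}$, writing $\chi^0=(d_j)_j$ and $(\chi^0)'=(d'_j)_j$, one computes
$$\la(\widehat{w}.(\chi+\chi')-(\chi+\chi'))-\la(\widehat{w}.\chi-\chi)=\sum_{j\in J}n_j\sigma_jd'_{w(j)}+\sum_{j\in J}\la_j(\sigma_jd'_{w(j)}-d'_j)+\sum_{j\in J}\sigma_j d_{w(j)}d'_{w(j)}+\textrm{(terms in $d'$ alone)}.$$
Wait — the cross term $\sum_j \sigma_j d_{w(j)}d'_{w(j)}$ involves $d$ and is \emph{not} obviously bounded, since $D(J)$ need not be bounded a priori. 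So a naive "the difference is uniformly bounded" argument fails for $C^{(1)}_J$, and one must be more careful. The first step is therefore: reduce to the case where $\la^0=\fr{\la^0}$ (i.e. $|\la_j|\leq\tfrac12$ for all $j$), using translation invariance (Lemma~\ref{lemma:translation_invariance1}) — noting that replacing $\la^0$ by $\fr{\la^0}$ also modifies $\chi^0$ by an integer tuple, which does not affect membership of $(\chi^0)'$ in $\ell^1(J)$, and replacing $\chi+\chi'$ correspondingly.

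Next, observe that only one direction needs proof: if $(J,\la,\chi)$ satisfies the PEC then so does $(J,\la,\chi+\chi')$, because applying the result with $\chi+\chi'$ in place of $\chi$ and $-\chi'$ in place of $\chi'$ gives the converse (and $-\chi'\in\ell^1(J)$). For the forward direction, the key structural input is that the PEC for $\widehat{\WW}(X)$ forces a strong decay condition on $D(J)$ relative to the accumulation structure: Proposition~\ref{prop:necessary_A_aff} (for $A^{(1)}_J$, giving boundedness of $D(J)$ together with the summability of the "tails" $J_+,J_-$ when $r_{\max}-r_{\min}=1$) and Proposition~\ref{prop:necessary_B_aff} (for $C^{(1)}_J$, giving summability of the sets $J_m^{>1/2}$ etc.). The plan is to use these to argue that, on the (cofinitely many) indices $j$ where $\sigma_j=+1$ and $w(j)=j$ and $d_j$ is "in the bulk", the extra terms contributed by $\chi'$ are controlled: split $\sum_j\sigma_j d_{w(j)}d'_{w(j)}$ as $\sum_{d_j\textrm{ bulk}}+\sum_{d_j\textrm{ tail}}$; the bulk part is bounded by $(\sup_{\textrm{bulk}}|d_j|)\cdot\|(\chi^0)'\|_1<\infty$, and the tail part is bounded by $\sum_{j\in\textrm{tail}}|d_j|\cdot|d'_j|$, but on the tail $(\chi^0)'$ decays in $\ell^1$ while $d_j$ grows only through the summable tails identified in Propositions~\ref{prop:necessary_A_aff}(3) and \ref{prop:necessary_B_aff}(1)–(2) — hmm, this still requires that the product $|d_j||d'_j|$ be summable, which is not automatic from $|d'_j|$ summable alone.

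The cleanest route, which I expect is what the authors take, is to avoid estimating $\la(\widehat{w}.(\chi+\chi')-(\chi+\chi'))$ directly and instead pass through the minimal-energy description. By Theorem~\ref{thmintro:A} (or rather the machinery being built toward it), for $X\in\{A^{(1)}_J,C^{(1)}_J\}$ one expects "$(J,\la,\chi)$ satisfies the PEC" to be equivalent to "$\chi^0=\chi^0_{\min}+\chi^0_{\su}$ with $\chi^0_{\min}\in C_{\min}(\la^0,X)$ (suitably normalised) and $\chi^0_{\su}\in\ell^1(J)$", which is manifestly stable under adding an element of $\ell^1(J)$. But this lemma is stated \emph{before} that characterisation, so it must be proved by hand, and is presumably used as an ingredient. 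Therefore the realistic plan is the direct estimate after all, using \eqref{eqn:second_eqn} together with the completion-of-square form $\la(\widehat{w}.\chi-\chi)=\tfrac12\sum_j\big((n_j+\la_j+\sigma_jd_{w(j)})^2-(\la_j+d_j)^2\big)$: for the perturbed version one gets the same expression with $d$ replaced by $d+d'$, and expanding,
$$\la(\widehat{w}.(\chi+\chi')-(\chi+\chi'))=\la(\widehat{w}.\chi-\chi)+\sum_{j}(n_j+\la_j+\sigma_jd_{w(j)})\sigma_jd'_{w(j)}-\sum_j(\la_j+d_j)d'_j+\tfrac12\sum_j\big(d'^2_{w(j)}-d'^2_j\big).$$
The last sum vanishes upon resummation (it is $\tfrac12\sum_j d'^2_{w(j)}-\tfrac12\sum_j d'^2_j=0$ since $w$ is a bijection with finite support). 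So the error is $\sum_j\big[(n_j+\la_j+\sigma_jd_{w(j)})\sigma_j d'_{w(j)}-(\la_j+d_j)d'_j\big]$. After reindexing the first sum via $j\mapsto w(j)$ it becomes $\sum_j(n_{w(j)}+\la_{w(j)}+\sigma_{w(j)}d_j)\sigma_{w(j)}d'_j-\sum_j(\la_j+d_j)d'_j$. The terms $(\la_{w(j)}+\la_j)d'_j$ are bounded by $\|(\chi^0)'\|_1$ since $|\la_j|\leq\tfrac12$. The genuinely dangerous terms are $\sum_j n_{w(j)}\sigma_{w(j)}d'_j$ and $\sum_j(\sigma_{w(j)}d_j-d_j)d'_j$. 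For the first: $n_{w(j)}$ can be large, but then the corresponding term $\tfrac12 n_j^2$ in $\la(\widehat{w}.\chi-\chi)$ is even larger, and one absorbs $n_{w(j)}\sigma_{w(j)}d'_j$ into $\tfrac12 n_j^2$ via $|nd'|\leq \tfrac12 n^2\cdot|d'|+\tfrac12|d'|\cdot(\textrm{something})$... more cleanly, $|n_{w(j)}d'_j|\leq \varepsilon n_{w(j)}^2+\tfrac1{4\varepsilon}d'^2_j$ and $\sum_j d'^2_j\leq\|(\chi^0)'\|_\infty\|(\chi^0)'\|_1<\infty$, so for small $\varepsilon$ this is absorbed into a fraction of $\tfrac12\sum n_j^2$ up to a finite additive constant. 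For the second dangerous sum, $\sigma_{w(j)}\neq 1$ only on the finite support of $\sigma$, and on that finite set one needs $d_j$ bounded — which is where the necessary conditions enter: one restricts attention to those $\widehat{w}$ for which the support of $\sigma$ lies in the region where $D$ is bounded (using that summable/bounded-accumulation conditions from Prop.~\ref{prop:necessary_A_aff}(1) and Prop.~\ref{prop:necessary_B_aff} confine the "unbounded part" of $D$ to a summable set, on which $d'$ is also summable so the product $\sum|d_j||d'_j|$ over that set is finite). The main obstacle, and the step I expect to require the most care, is precisely this control of $\sum_j(\sigma_{w(j)}d_j-d_j)d'_j$ and the interaction between large $n_j$ and large $d_j$ — i.e. verifying that the quadratic term $\tfrac12\sum n_j^2$ genuinely dominates all the mixed error terms uniformly over $\widehat{\WW}(X)$, so that $\inf$ over $\widehat{\WW}(X)$ stays finite. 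Once that uniform domination (error $\geq -C$ for a constant $C$ depending only on $\la^0$, $\chi^0$ and $\|(\chi^0)'\|_1$) is established, the lemma follows, and symmetry in $\chi'\leftrightarrow-\chi'$ gives the equivalence.
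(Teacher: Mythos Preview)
Your plan is entirely different from the paper's, and much more elaborate. The paper's proof is three lines: by linearity one has $\la(\widehat{w}.(\chi+\chi')-(\chi+\chi'))=\la(\widehat{w}.\chi-\chi)+\la(\widehat{w}.\chi'-\chi')$, whence
\[
\inf\la\big(\widehat{\WW}.(\chi+\chi')-(\chi+\chi')\big)\geq\inf\la\big(\widehat{\WW}.\chi-\chi\big)+\inf\la\big(\widehat{\WW}.\chi'-\chi'\big),
\]
and the paper asserts that the second infimum on the right is finite by citing Lemma~\ref{lemma:trivial_case00} and \cite[Lemma~5.4]{PEClocfin}; the converse follows by replacing $\chi^{0}$ with $\chi^{0}+(\chi^{0})'$ and $(\chi^{0})'$ with $-(\chi^{0})'$. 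There is no expansion of the squares in~\eqref{eqn:second_eqn}, no reduction to $\la^{0}=\fr{\la^{0}}$, and no appeal to Propositions~\ref{prop:necessary_A_aff} or~\ref{prop:necessary_B_aff}.

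That said, your instinct that the term $\sum_j n_j\sigma_j d'_{w(j)}$ is genuinely dangerous is correct, and it points at a real gap in the paper's short argument: since $\chi'_d=0$ but $\la_c=1\neq 0$, Lemma~\ref{lemma:trivial_case00} does not apply to the triple $(J,\la,\chi')$, and~\eqref{eqn:first_eqn} gives $\la(\tau_x.\chi'-\chi')=\sum_j n_j d'_j$ (for $\sigma=w=1$), which is unbounded below as $x=\sum n_j e_j$ ranges over $\TTT$; hence $\inf\la(\widehat{\WW}.\chi'-\chi')=-\infty$ whenever $(\chi^{0})'\neq 0$, and the displayed subadditivity bound is vacuous. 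So a repair along your lines---absorbing the $n$-linear error into the quadratic $\tfrac12\sum n_j^2$---is what is actually needed. Two comments on your plan: first, the reduction to $\la^{0}=\fr{\la^{0}}$ via Lemma~\ref{lemma:translation_invariance1} is the right opening move, since it bounds $|\la_j|\leq\tfrac12$ and makes the error $\sum_j\la_j(\sigma_jd'_{w(j)}-d'_j)$ controllable by $\|(\chi^{0})'\|_1$; second, routing through Propositions~\ref{prop:necessary_A_aff} and~\ref{prop:necessary_B_aff} is both unnecessary and illegitimate here, since Proposition~\ref{prop:necessary_A_aff} assumes $\la$ is $\ZZ$-discrete, which the lemma does not. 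The honest remaining obstacle is exactly the one you flag: after absorbing $\varepsilon\sum n_j^2$, one must show that $\la(\widehat{w}.\chi-\chi)-\varepsilon\sum n_j^2$ is still bounded below uniformly in $\widehat{w}$, and this does not follow formally from the PEC for $(J,\la,\chi)$ alone without some further input.
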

\begin{proof}
Set $\widehat{\WW}=\widehat{\WW}(X)$, and assume that $(J,\la,\chi)$ satisfies the PEC for $\widehat{\WW}$. Since
$$\inf\la\big(\widehat{\WW}.(\chi+\chi')-(\chi+\chi')\big)\geq \inf\la\big(\widehat{\WW}.\chi-\chi)+\inf\la\big(\widehat{\WW}.\chi'-\chi'),$$
the triple $(J,\la,\chi+\chi')$ then satisfies the PEC for $\widehat{\WW}$ by Lemma~\ref{lemma:trivial_case00} and \cite[Lemma~5.4]{PEClocfin}. Replacing $\chi^{0}$ by $\chi^{0}+(\chi^{0})'$ and $(\chi^{0})'$ by $-(\chi^{0})'$, the converse follows.
\end{proof}

\begin{theorem}\label{thm:PECA}
Assume that $\la$ is $\ZZ$-discrete and that $-\tfrac{1}{2}\leq \la_j<\tfrac{1}{2}$ for all $j\in J$. Then the following are equivalent:
\begin{enumerate}
\item
$(J,\la,\chi)$ satisfies the PEC for $\widehat{\WW}(A^{(1)}_J)$.
\item
$\chi^{0}\in C_{\min}(\la^{0},A^{(1)}_J)+\ell^1(J)$.
\end{enumerate}
\end{theorem}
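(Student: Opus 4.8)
The implication (2)$\Rightarrow$(1) is the easy direction. Assuming $\chi^0 = \chi^0_{\min} + (\chi^0)'$ with $\chi^0_{\min} \in C_{\min}(\la^0, A_J^{(1)})$ and $(\chi^0)' \in \ell^1(J)$, I would first invoke Proposition~\ref{prop:minset_A_aff} to get that the triple $(J,\la,(0,\chi^0_{\min},1))$ is of minimal energy for $\widehat{\WW}(A_J^{(1)})$, hence satisfies the PEC. Then Lemma~\ref{lemma:invariance_summable_aff2} (applied with $X = A_J^{(1)}$) allows me to add back the $\ell^1$-perturbation $(\chi^0)'$ without destroying the PEC, giving (1).

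For the harder direction (1)$\Rightarrow$(2), the strategy is to reduce to the locally finite characterisation (Theorem~\ref{theorem:charact_PEC_locally_finite}) together with the structural information extracted in Proposition~\ref{prop:necessary_A_aff}. After translating by $\mm = [\la^0]$ via Lemma~\ref{lemma:translation_invariance1}, I may assume $\la^0 = \fr{\la^0}$, so that $-\tfrac12 \le \la_j < \tfrac12$ and (by $\ZZ$-discreteness, using Remark~\ref{remark:Z-discrete_alt}) $\Lambda(J)$ is finite; note also that translating by an integer tuple changes $\chi^0$ only by an element of $\ZZ^{(J)}$ on a \emph{finite} support (when passing between the normalised and original triples) so it does not affect membership in $C_{\min}(\la^0,A_J^{(1)}) + \ell^1(J)$ in an essential way — I will need to be a little careful here, since $[\la^0]$ need not have finite support, but the constant-$1$ direction in the definition of $C_{\min}(\la^0,A_J^{(1)})$ and translation invariance of the minimal-energy set should absorb the ambiguity; this bookkeeping is one place to be cautious. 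Given the reduction, PEC for $\widehat{\WW}(A_J^{(1)})$ implies PEC for $\WW(A_J)$, so by Theorem~\ref{theorem:charact_PEC_locally_finite} we may write $\chi^0 = c^0 + s^0$ with $c^0 \in C_{\min}(\la^0, A_J)$ and $s^0 \in \ell^1(J)$. The task is then to upgrade $c^0$ to an element of the smaller cone $C_{\min}(\la^0, A_J^{(1)})$ at the cost of another $\ell^1$-error.

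The key input for this upgrade is Proposition~\ref{prop:necessary_A_aff}: it tells us that $D(J)$ is bounded, that the extreme accumulation points $r_{\min}, r_{\max}$ satisfy $r_{\max} - r_{\min} \le 1$, and that in the boundary case $r_{\max} - r_{\min} = 1$ the sets $J_+ = \{d_j > r_{\max}\}$ and $J_- = \{d_j < r_{\min}\}$ are summable in the appropriate sense. I would shift $\chi^0$ by a constant (the constant-$1$ direction is free in $C_{\min}(\la^0,A_J^{(1)})$) so that the accumulation points sit symmetrically, e.g. $r_{\min} \ge -\tfrac12$ and $r_{\max} \le \tfrac12$, and then define $\tilde c^0$ by clamping the entries of $\chi^0$ into $[-\tfrac12,\tfrac12]$: $\tilde c_j := \max(-\tfrac12, \min(\tfrac12, d_j))$. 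By boundedness of $D(J)$ only finitely many entries are clamped from above by more than a summable amount outside the $r_{\max}-r_{\min}=1$ case, and exactly the summability conclusion (3) of Proposition~\ref{prop:necessary_A_aff} guarantees $\chi^0 - \tilde c^0 \in \ell^1(J)$ in the boundary case. Finally I must check $\tilde c^0 \in C_{\min}(\la^0, A_J^{(1)})$: the constraint $|d_j| \le \tfrac12$ holds by construction, and the monotonicity constraint ($\la_i < \la_j \Rightarrow d_i \ge d_j$) is inherited from $c^0 \in C_{\min}(\la^0,A_J)$ because the clamping map $x \mapsto \max(-\tfrac12,\min(\tfrac12,x))$ is monotone non-decreasing — though one subtlety is that I clamped $\chi^0$ rather than $c^0$, so I should instead argue directly that $\chi^0$ itself, after the constant shift, satisfies the monotonicity condition up to an $\ell^1$ error (using that the locally finite decomposition $\chi^0 = c^0 + s^0$ already encodes this), and that clamping preserves it. The main obstacle is precisely this last step: cleanly reconciling the decomposition coming from Theorem~\ref{theorem:charact_PEC_locally_finite} with the clamping construction and verifying that no non-summable mass is lost, especially handling the three possible positions of the accumulation interval $[r_{\min},r_{\max}]$ relative to $[-\tfrac12,\tfrac12)$ and the half-open nature of that interval.
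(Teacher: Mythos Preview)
Your overall strategy for both directions matches the paper's proof exactly: invoke Proposition~\ref{prop:minset_A_aff} and Lemma~\ref{lemma:invariance_summable_aff2} for (2)$\Rightarrow$(1); for (1)$\Rightarrow$(2), use Theorem~\ref{theorem:charact_PEC_locally_finite} to get a preliminary decomposition $\chi^0=c^0+s^0$ with $c^0\in C_{\min}(\la^0,A_J)$, then exploit Proposition~\ref{prop:necessary_A_aff} to shift by a constant and clamp into $[-\tfrac12,\tfrac12]$.

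Two points of unnecessary complication are worth flagging. First, the translation by $\mm=[\la^0]$ is a non-step: the hypothesis $-\tfrac12\le\la_j<\tfrac12$ already says $\la^0=\fr{\la^0}$, so $\mm=0$. (Your parenthetical claim that such a translation changes $\chi^0$ only on a finite support is in any case false; Lemma~\ref{lemma:translation_invariance1} allows $\mm\in\ZZ^J$ with arbitrary support.) Second, the ``subtlety'' you identify---clamping $\chi^0$ versus $c^0$---is real, and the paper avoids it by reordering: \emph{first} replace $\chi^0$ by $c^0$ (Lemma~\ref{lemma:invariance_summable_aff2} guarantees the PEC for $\widehat\WW(A_J^{(1)})$ persists under this $\ell^1$-modification), \emph{then} apply Proposition~\ref{prop:necessary_A_aff} to this new $\chi^0$, shift by the midpoint $r=(r_{\min}+r_{\max})/2$, and clamp. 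Since the object being clamped now already lies in $C_{\min}(\la^0,A_J)$, monotonicity of the clamping map $x\mapsto\max(-\tfrac12,\min(\tfrac12,x))$ immediately gives $\chi^0-(\chi^0)'\in C_{\min}(\la^0,A_J)\cap[-\tfrac12,\tfrac12]^J\subseteq C_{\min}(\la^0,A_J^{(1)})$, with no residual bookkeeping. Your worry about the half-open interval $[-\tfrac12,\tfrac12)$ is also misplaced: the definition of $C_{\min}(\la^0,A_J^{(1)})$ requires only $|d_j|\le\tfrac12$.
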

\begin{proof} (2) $\Rightarrow$ (1): 
If $\chi^{0}\in C_{\min}(\la^{0},A^{(1)}_J)+\ell^1(J)$, then $(J,\la,\chi)$ satisfies the PEC for $\widehat{\WW}(A^{(1)}_J)$ by Proposition~\ref{prop:minset_A_aff} and Lemma~\ref{lemma:invariance_summable_aff2}. 

(1) $\Rightarrow$ (2): 
Assume now that $(J,\la,\chi)$ satisfies the PEC for $\widehat{\WW}(A^{(1)}_J)$, and let us prove that, up to substracting from $\chi^{0}$ some element of $\ell^1(J)$, one has $\chi^{0}\in C_{\min}(\la^{0},A^{(1)}_J)$. Note that, by Lemma~\ref{lemma:invariance_summable_aff2}, replacing $\chi^{0}$ by $\chi^{0}-(\chi^{0})'$ for some $(\chi^{0})'\in \ell^1(J)$ does not affect the fact that $(J,\la,\chi)$ satisfies the PEC for $\widehat{\WW}(A^{(1)}_J)$.

Since $(J,\la^{0},\chi^{0})$ satisfies the PEC for $\WW(A_J)$, it follows from Theorem~\ref{theorem:charact_PEC_locally_finite} that, up to substracting from $\chi^{0}$ some element of $\ell^1(J)$, we may assume that $\chi^{0}\in C_{\min}(\la^{0},A_J)$. 
By Proposition~\ref{prop:necessary_A_aff}, the set $D(J)$ is bounded, and if $r_{\min}$ and $r_{\max}$ respectively denote the minimal and maximal accumulation points of $J$, then $r_{\max}-r_{\min}\leq 1$. Set
$$r:=\frac{r_{\min}+r_{\max}}{2}\in\RR.$$ Then, up to replacing $\chi^{0}$ by $\chi^{0}-r\cdot\mathbf{1}\in C_{\min}(\la^{0},A_J)$, we may moreover assume that $r=0$, so that
$$[r_{\min},r_{\max}]\subseteq [-\tfrac{1}{2},\tfrac{1}{2}].$$
Set $J_+:=\{j\in J \ | \ d_j>1/2\}$ and $J_-:=\{j\in J \ | \ d_j<-1/2\}$. Then either $|r_{\min}|,|r_{\max}|<\tfrac{1}{2}$, in which case $J_+$ and $J_-$ are both finite, or else $r_{\max}-r_{\min}= 1$, in which case 
\begin{equation*}
\sum_{j\in J_+}{(d_j-\tfrac{1}{2})}=\sum_{j\in J_+}{(d_j-r_{\max})}<\infty\quad\textrm{and}\quad\sum_{j\in J_-}{(-\tfrac{1}{2}-d_j)}=\sum_{j\in J_-}{(r_{\min}-d_j)}<\infty
\end{equation*}
by Proposition~\ref{prop:necessary_A_aff}(3). In all cases, the tuple $(\chi^{0})'=(d_j')_{j\in J}$ defined by 
\begin{equation*}
d_j'=
\left\{
\begin{array}{ll}
d_j-1/2\quad &\textrm{if $d_j>1/2$,}\\
d_j+1/2\quad &\textrm{if $d_j<-1/2$,}\\
0\quad &\textrm{otherwise}
\end{array}
\right.
\end{equation*}
belongs to $\ell^1(J)$. Note that $\chi^{0}-(\chi^{0})'\in C_{\min}(\la^{0},A_J)$. Indeed, this follows from the fact that for all $i,j\in J$:
$$d_i\leq d_j\implies d_i-d_i'\leq d_j-d_j'.$$
Hence, up to replacing $\chi^{0}$ by $\chi^{0}-(\chi^{0})'$, we may assume that $\chi^{0}\in [-\tfrac{1}{2},\tfrac{1}{2}]^J\cap C_{\min}(\la^{0},A_J)\subseteq C_{\min}(\la^{0},A^{(1)}_J)$. This concludes the proof of the theorem.
\end{proof}

\begin{theorem}\label{thm:PECB}
Assume that $\la$ is $\ZZ$-discrete and that $\la^{0}=\fr{\la^{0}}$. Then the following are equivalent:
\begin{enumerate}
\item
$(J,\la,\chi)$ satisfies the PEC for $\widehat{\WW}(C^{(1)}_J)$.
\item
$\chi^{0}\in C_{\min}(\la^{0},C^{(1)}_J)+\ell^1(J)$.
\end{enumerate}
\end{theorem}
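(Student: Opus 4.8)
The plan is to follow the template of the proof of Theorem~\ref{thm:PECA}, adding one extra layer to handle the fractional-part condition (C4) built into $C_{\min}(\la^0,C^{(1)}_J)$. Direction (2)$\Rightarrow$(1) is immediate: writing $\chi^0=\chi^0_{\min}+(\chi^0)'$ with $\chi^0_{\min}\in C_{\min}(\la^0,C^{(1)}_J)$ and $(\chi^0)'\in\ell^1(J)$, Proposition~\ref{prop:minset_B_aff} gives that $(J,\la,(0,\chi^0_{\min},1))$ is of minimal energy for $\widehat{\WW}(C^{(1)}_J)$, hence satisfies the PEC, and Lemma~\ref{lemma:invariance_summable_aff2} shows that adding the $\ell^1$-tail $(\chi^0)'$ preserves the PEC.

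For (1)$\Rightarrow$(2) I would successively subtract $\ell^1$-tuples from $\chi^0$, each harmless by Lemma~\ref{lemma:invariance_summable_aff2}, until it lies in $C_{\min}(\la^0,C^{(1)}_J)$. \emph{First}, restricting the PEC to $\WW(B_J)=\WW(C_J)\subseteq\widehat{\WW}(C^{(1)}_J)$ and using that $\la^0$ is finite (Remark~\ref{remark:Z-discrete_alt}), Theorem~\ref{theorem:charact_PEC_locally_finite} lets me assume $\chi^0\in C_{\min}(\la^0,B_J)$; in particular (C3) holds. \emph{Next}, as $\Lambda(J)$ is finite, Proposition~\ref{prop:necessary_B_aff}(1)--(2) shows that clamping each $d_j$ into $[-\tfrac12,\tfrac12]$ when $|\la_j|<\tfrac12$ and into $[-1,1]$ when $|\la_j|=\tfrac12$ changes $\chi^0$ by an $\ell^1$-tuple; clamping preserves the sign of $d_j$, so (C3) survives, and now (C1)--(C2) hold as well. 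The point of this step is that for the resulting $\chi^0$ the map $d_j\mapsto|\fr{d_j}|$ becomes simple on the relevant range: $|\fr{d_j}|=|d_j|$ when $|\la_j|<\tfrac12$, and $|\fr{d_j}|=\min(d_j,1-d_j)$ with $d_j\in[0,1]$ (by (C2)--(C3)) when $|\la_j|=\tfrac12$.

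\emph{Finally}, to secure (C4), I re-apply Proposition~\ref{prop:necessary_B_aff}(3) to the current triple: $(J,-|\la^0|,(|\fr{d_j}|)_{j\in J})$ satisfies the PEC for $\WW(A_J)$, so since $-|\la^0|$ is finite Theorem~\ref{theorem:charact_PEC_locally_finite} writes $(|\fr{d_j}|)_j=(e_j)_j+(f_j)_j$ with $(e_j)_j\in C_{\min}(-|\la^0|,A_J)$ (i.e.\ $|\la_i|<|\la_j|\Rightarrow e_i\le e_j$) and $(f_j)_j\in\ell^1(J)$; absorbing the part of $e_j$ outside $[0,\tfrac12]$ into $f_j$, which keeps $(e_j)_j$ in the cone and $(f_j)_j$ in $\ell^1$, I may assume $0\le e_j\le\tfrac12$. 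Replacing the tuple $(d_j)_j$ by $(\tilde d_j)_j$, where $\tilde d_j:=\mathrm{sign}(d_j)\,e_j$ when $|\la_j|<\tfrac12$ and $\tilde d_j:=e_j$ or $1-e_j$ according as $d_j\le\tfrac12$ or $d_j>\tfrac12$ when $|\la_j|=\tfrac12$, one checks from the explicit form of $d_j\mapsto|\fr{d_j}|$ above that $|d_j-\tilde d_j|\le|f_j|$, so this too is an $\ell^1$-modification; and by construction $|\fr{\tilde d_j}|=e_j$, one has $|\tilde d_j|\le\tfrac12$ resp.\ $\tilde d_j\in[0,1]$, and $\mathrm{sign}(\tilde d_j)=\mathrm{sign}(d_j)$, so $(\tilde d_j)_j$ satisfies (C1)--(C4). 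Hence the original $\chi^0$ lies in $C_{\min}(\la^0,C^{(1)}_J)+\ell^1(J)$.

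The only genuine obstacle, and the one place where this argument departs from the proof of Theorem~\ref{thm:PECA}, is condition (C4): it is invisible to the locally finite $\WW(B_J)$-reduction, which controls $|d_j|$ but not $|\fr{d_j}|$, and it is not respected by the naive clamping/truncation. It is handled by the two-fold use of Proposition~\ref{prop:necessary_B_aff} --- parts (1)--(2) to clamp $\chi^0$ into a range where $d_j\mapsto|\fr{d_j}|$ is piecewise linear of slope $\pm1$, and part~(3) to propagate the monotonicity of the fractional parts through the locally finite $A_J$-characterisation --- together with the routine but somewhat delicate verification, sketched above, that the resulting $\ell^1$-perturbation of $(|\fr{d_j}|)_j$ lifts to an $\ell^1$-perturbation of $\chi^0$.
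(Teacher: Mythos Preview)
Your argument is correct, but it proceeds in a genuinely different order from the paper's proof. The paper secures (C4) \emph{first}: from Proposition~\ref{prop:necessary_B_aff}(3) and Theorem~\ref{theorem:charact_PEC_locally_finite} it arranges $|\fr{\chi^0}|\in C_{\min}(-|\la^0|,A_J)$, and then obtains (C1), (C2), (C3) in turn via Proposition~\ref{prop:necessary_B_aff}(1), (2), (4), each time choosing the $\ell^1$-correction so that $|\fr{d_j}|$ is preserved exactly (the new coordinate is $\pm|\fr{d_j}|$ in the (C1) step, $\pm(1-|\fr{d_j}|)$ in the (C2) step, and $-d_j$ in the (C3) step). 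Thus (C4), once obtained, is never disturbed.

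You instead get (C3) first, via the restriction to $\WW(B_J)$ and Theorem~\ref{theorem:charact_PEC_locally_finite} for $B_J$ (the paper uses Proposition~\ref{prop:necessary_B_aff}(4) for this), then clamp to obtain (C1)--(C2), and only at the end address (C4). The payoff of your order is that, once (C1)--(C3) hold, the map $d_j\mapsto|\fr{d_j}|$ is piecewise linear of slope $\pm 1$ on the relevant range, so the lifting of the $\ell^1$-perturbation of $(|\fr{d_j}|)_j$ to an $\ell^1$-perturbation of $\chi^0$ is transparent; indeed one gets $|d_j-\tilde d_j|=|f_j|$ exactly. The paper's route is shorter but leaves that same lifting step (in its first paragraph) somewhat implicit. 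Either way the ingredients are the same---Proposition~\ref{prop:necessary_B_aff}, Theorem~\ref{theorem:charact_PEC_locally_finite}, and Lemma~\ref{lemma:invariance_summable_aff2}---just assembled differently.
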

\begin{proof}
(2) $\Rightarrow$ (1): 
If $\chi^{0}\in C_{\min}(\la^{0},C^{(1)}_J)+\ell^1(J)$, then $(J,\la,\chi)$ satisfies the PEC for $\widehat{\WW}(C^{(1)}_J)$ by Proposition~\ref{prop:minset_B_aff} and Lemma~\ref{lemma:invariance_summable_aff2}. 

(1) $\Rightarrow$ (2): 
Assume now that $(J,\la,\chi)$ satisfies the PEC for $\widehat{\WW}(C^{(1)}_J)$, and let us prove that, up to substracting from $\chi^{0}$ some element of $\ell^1(J)$, the four conditions (C1)--(C4) from Definition~\ref{definition:min_sets_affine} are satisfied by $\chi^{0}$. Note that, by Lemma~\ref{lemma:invariance_summable_aff2}, replacing $\chi^{0}$ by $\chi^{0}-(\chi^{0})'$ for some $(\chi^{0})'\in \ell^1(J)$ does not affect the fact that $(J,\la,\chi)$ satisfies the PEC for $\widehat{\WW}(C^{(1)}_J)$.

Since $(J,-|\la^{0}|,|\fr{\chi^{0}}|)$ satisfies the PEC for $\WW(A_J)$ by Proposition~\ref{prop:necessary_B_aff}(3), it follows from Theorem~\ref{theorem:charact_PEC_locally_finite} that, up to substracting from $\chi^{0}$ some element of $\ell^1(J)$, we may assume that $|\fr{\chi^{0}}|\in C_{\min}(-|\la^{0}|,A_J)$. In other words, we may assume that 
$$\forall i,j\in J: \ |\la_i|<|\la_j|\implies |\fr{d_i}|\leq |\fr{d_j}|,$$
hence that (C4) is satisfied.

We next claim that the tuple
$(\chi^{0})'=(d_j')_{j\in J}$ defined by 
\begin{equation*}
d_j'=
\left\{
\begin{array}{ll}
d_j-|\fr{d_j}|\quad &\textrm{if $|\la_j|<1/2$ and $d_j>1/2$,}\\
d_j+|\fr{d_j}|\quad &\textrm{if $|\la_j|<1/2$ and $d_j<-1/2$,}\\
0\quad &\textrm{otherwise}
\end{array}
\right.
\end{equation*}
belongs to $\ell^1(J)$. Indeed, this follows from Proposition~\ref{prop:necessary_B_aff}(1) and the fact that for $d_j>1/2$, one has $$\big|d_j-|\fr{d_j}|\big|=d_j-|\fr{d_j}|=(d_j-\tfrac{1}{2})+(\tfrac{1}{2}-|\fr{d_j}|)\leq 2(d_j-\tfrac{1}{2}),$$ while for $d_j<-1/2$, one has $$\big|d_j+|\fr{d_j}|\big|=-d_j-|\fr{d_j}|=(-d_j-\tfrac{1}{2})+(\tfrac{1}{2}-|\fr{d_j}|)\leq 2(-d_j-\tfrac{1}{2}).$$
Note moreover that 
$$|\fr{d_j-d'_j}|=|\fr{d_j}| \quad\textrm{for all $j\in J$},$$
so that $\chi^{0}-(\chi^{0})'$ still satisfies (C4). Hence, up to replacing $\chi^{0}$ by $\chi^{0}-(\chi^{0})'$, we may assume that $\chi^{0}$ satisfies (C1) and (C4).

Similarly, we claim that the tuple
$(\chi^{0})'=(d_j')_{j\in J}$ defined by 
\begin{equation*}
d_j'=
\left\{
\begin{array}{ll}
d_j+|\fr{d_j}|-1\quad &\textrm{if $|\la_j|=1/2$ and $d_j>1$,}\\
d_j-|\fr{d_j}|+1\quad &\textrm{if $|\la_j|=1/2$ and $d_j<-1$,}\\
0\quad &\textrm{otherwise}
\end{array}
\right.
\end{equation*}
belongs to $\ell^1(J)$. Indeed, this follows from Proposition~\ref{prop:necessary_B_aff}(2) and the fact that for $d_j>1$, one has $$\big|d_j+|\fr{d_j}|-1\big|=d_j-1+|\fr{d_j}|\leq 2(d_j-1),$$ while for $d_j<-1$, one has $$\big|d_j-|\fr{d_j}|+1\big|=-d_j-1+|\fr{d_j}|\leq 2(-d_j-1).$$
Since moreover
$$|\fr{d_j-d'_j}|=|\fr{d_j}| \quad\textrm{for all $j\in J$},$$
$\chi^{0}-(\chi^{0})'$ still satisfies (C4). Since clearly $\chi^{0}-(\chi^{0})'$ also still satisfies (C1), we may thus assume, up to replacing $\chi^{0}$ by $\chi^{0}-(\chi^{0})'$, that $\chi^{0}$ satisfies (C1), (C2) and (C4).

Finally, it follows from Proposition~\ref{prop:necessary_B_aff}(4) and the fact that $\Lambda(J)$ is finite that 
$$\sum_{j\in J_+}{|d_j|}<\infty,$$ where $J_+:=\{j\in J \ | \ \la_jd_j>0\}$.
Hence the tuple $(\chi^{0})'=(d_j')_{j\in J}$ defined by 
\begin{equation*}
d_j'=
\left\{
\begin{array}{ll}
2d_j\quad &\textrm{if $\la_jd_j>0$,}\\
0\quad &\textrm{otherwise}
\end{array}
\right.
\end{equation*}
belongs to $\ell^1(J)$. Again, as 
$$|\fr{d_j-d'_j}|=|\fr{d_j}| \quad\textrm{for all $j\in J$},$$
$\chi^{0}-(\chi^{0})'$ still satisfies (C4). Since clearly $\chi^{0}-(\chi^{0})'$ also still satisfies (C1) and (C2), we may then assume, up to replacing $\chi^{0}$ by $\chi^{0}-(\chi^{0})'$, that $\chi^{0}$ satisfies (C1), (C2), (C3) and (C4), and hence that $\chi^{0}\in C_{\min}(\la^{0},C^{(1)}_J)$. This concludes the proof of the theorem.
\end{proof}

\begin{prop}\label{prop:other_types}
Let $X$ be one of the types $B^{(1)}_J$, $D^{(1)}_J$, $C^{(2)}_J$, and $BC^{(2)}_J$. Assume that $\la$ is $\ZZ$-discrete. Then $(J,\la,\chi)$ satisfies the PEC for $\widehat{\WW}(X)$ if and only if it satisfies the PEC for $\widehat{\WW}(C^{(1)}_J)$.
\end{prop}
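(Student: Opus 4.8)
The plan is to exploit the inclusions of Weyl groups
$$\widehat{\WW}(D_J^{(1)}) \subseteq \widehat{\WW}(B_J^{(1)}) = \widehat{\WW}(C_J^{(2)}) \subseteq \widehat{\WW}(C_J^{(1)}) = \widehat{\WW}(BC_J^{(2)})$$
together with the fact that each of these affine Weyl groups has the same translation-part lattice up to finite index (compare the lattices $\TTT(X)$ listed in \S\ref{subsection:Weyl_group}). One direction is essentially free: since $\widehat{\WW}(X)$ is contained in $\widehat{\WW}(C^{(1)}_J)$ for each of the four types $X\in\{B^{(1)}_J,D^{(1)}_J,C^{(2)}_J,BC^{(2)}_J\}$, if $(J,\la,\chi)$ satisfies the PEC for $\widehat{\WW}(C^{(1)}_J)$ then it automatically satisfies the PEC for the smaller group $\widehat{\WW}(X)$. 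So the content is the reverse implication: PEC for $\widehat{\WW}(X)$ implies PEC for $\widehat{\WW}(C^{(1)}_J)$.

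\textbf{Reduction steps.} First I would reduce to the normalised situation $\la=(1,\la^0,0)$, $\chi=(0,\chi^0,1)$: by \S\ref{section:trivial_cases} we may assume $\la_c\chi_d\neq 0$ (the degenerate cases are handled there and the statement holds trivially or reduces to the locally finite picture, where Lemma~\ref{lemma:BCD} already gives the claim at the level of $\WW$), then by Lemma~\ref{lemma:SC0} we get $\la_c\chi_d>0$, and by Lemma~\ref{lemma:simpleobs2} we may normalise $\la_c=\chi_d=1$, $\la_d=\chi_c=0$. Using $\ZZ$-discreteness and Lemma~\ref{lemma:translation_invariance1} (applied with $\mm=[\la^0]$, which requires a small care for type $B^{(2)}_J$, but that type is not among the four here) we may further assume $\la^0=\fr{\la^0}$, so that $-\tfrac12\le\la_j<\tfrac12$ for all $j$ and $\la^0$ is finite. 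Now both sides of the asserted equivalence are PEC statements for $\widehat\WW$-orbits of the normalised $\chi$, and Theorem~\ref{thm:PECB} identifies the right-hand side with the membership $\chi^0\in C_{\min}(\la^0,C^{(1)}_J)+\ell^1(J)$.

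\textbf{Main step.} The crux is to show: if $(J,\la,\chi)$ satisfies the PEC for $\widehat{\WW}(X)$ (for $X$ one of the four types), then $\chi^0\in C_{\min}(\la^0,C^{(1)}_J)+\ell^1(J)$. The idea is to re-run the necessary-conditions arguments of Proposition~\ref{prop:necessary_B_aff} using only elements of $\widehat{\WW}(X)$ rather than all of $\widehat{\WW}(C^{(1)}_J)$. Concretely, the Weyl elements used there — single sign-flips $\sigma$ on a finite set $I$, transpositions $w$, and translations $\tau_x$ with $x\in\TTT$ supported on $I$ — are available (possibly after doubling $x$, i.e.\ replacing $x$ by $2x$, which only changes the quadratic estimates by harmless bounded factors) in each of $\widehat{\WW}(B^{(1)}_J)$, $\widehat{\WW}(D^{(1)}_J)$, $\widehat{\WW}(C^{(2)}_J)$, $\widehat{\WW}(BC^{(2)}_J)$: the lattices $\TTT(X)$ all contain $2Q$, and the $\sigma$'s (with a parity correction for $D$, absorbed by enlarging $I$ by one dummy index) and $S_{(J)}$ are present. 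Thus the four conclusions (1)--(4) of Proposition~\ref{prop:necessary_B_aff} carry over verbatim, and then the argument of the implication (1)$\Rightarrow$(2) in Theorem~\ref{thm:PECB} applies word for word to produce $\chi^0\in C_{\min}(\la^0,C^{(1)}_J)+\ell^1(J)$. By Theorem~\ref{thm:PECB} (the other direction) this is equivalent to PEC for $\widehat{\WW}(C^{(1)}_J)$, completing the proof.

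\textbf{Main obstacle.} The one point requiring genuine care is the bookkeeping of \emph{which} translations lie in $\TTT(X)$: for $X\in\{B^{(1)}_J,D^{(1)}_J,C^{(2)}_J\}$ one only has $\TTT(X)=\{\sum n_j e_j : \sum n_j\in 2\ZZ\}$, so a test translation like $n\check\alpha$ supported on a two-element set is available directly, whereas a translation supported on a single index $e_j$ is not — it must be replaced by $2e_j$ (as already done in Lemma~\ref{lemma:SC0}) or combined with a compensating entry on a second index. One must check that in each estimate of Proposition~\ref{prop:necessary_B_aff} this substitution changes the inequality only by a bounded multiplicative or additive constant, which does not affect summability or boundedness conclusions. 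Once this is verified the rest is mechanical.
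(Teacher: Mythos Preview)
Your approach is correct but takes a more laborious route than the paper. Both arguments exploit the chain of inclusions
$\widehat{\WW}(A^{(1)}_J)\subseteq \widehat{\WW}(D^{(1)}_J)\subseteq \widehat{\WW}(B^{(1)}_J)= \widehat{\WW}(C^{(2)}_J)\subseteq \widehat{\WW}(BC^{(2)}_J)=\widehat{\WW}(C^{(1)}_J)$
to reduce to showing that PEC for $\widehat{\WW}(D^{(1)}_J)$ implies PEC for $\widehat{\WW}(C^{(1)}_J)$, and both ultimately hinge on a ``parity correction at one extra index''. The difference is where this correction is applied. The paper does it once and for all, directly at the level of the infima: since PEC for $\widehat{\WW}(D^{(1)}_J)$ gives PEC for $\widehat{\WW}(A^{(1)}_J)$, Proposition~\ref{prop:necessary_A_aff}(1) yields $C:=\sup_j|d_j|<\infty$; then any $\widehat w=\tau_x\sigma w^{-1}\in\widehat{\WW}(C^{(1)}_J)$ with support $I$ is corrected at a single index $i_0\in J\setminus I$ (by adding $e_{i_0}$ to $x$ and/or flipping the sign there) to some $\widehat w'\in\widehat{\WW}(D^{(1)}_J)$, and formula~\eqref{eqn:second_eqn} gives the uniform bound $\la(\widehat w.\chi-\chi)\ge\la(\widehat w'.\chi-\chi)-(3/2+C)^2$. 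Your plan instead inserts the same one-index correction inside each of the four estimates of Proposition~\ref{prop:necessary_B_aff}, re-proving those conclusions for $\widehat{\WW}(X)$, and then runs the proof of Theorem~\ref{thm:PECB}. This works --- with a \emph{fixed} dummy index the correction contributes a fixed additive constant, so the summability and boundedness conclusions survive --- but it is appreciably longer. One caveat: your alternative of ``replacing $x$ by $2x$'' fails, since the entries $n_j=-[\la_j+\sigma_jd_{w(j)}]$ can be arbitrarily large before boundedness of $D(J)$ is established, and doubling them destroys the fractional-part identities underlying the estimates; the compensating-entry idea is the one that actually works.
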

\begin{proof}
From \S\ref{subsection:Weyl_group}, we deduce the following inclusions:
\begin{equation}\label{eqn:inclusionsWhat}
\widehat{\WW}(A^{(1)}_J)\subseteq \widehat{\WW}(D^{(1)}_J)\subseteq \widehat{\WW}(B^{(1)}_J)= \widehat{\WW}(C^{(2)}_J)\subseteq \widehat{\WW}(BC^{(2)}_J)=\widehat{\WW}(C^{(1)}_J).\end{equation}
It is thus sufficient to prove that the PEC for $\widehat{\WW}(D^{(1)}_J)$ implies the PEC for $\widehat{\WW}(C^{(1)}_J)$. 

Let us thus assume that $(J,\la,\chi)$ satisfies the PEC for $\widehat{\WW}(D^{(1)}_J)$. In order to prove that it also satisfies the PEC for $\widehat{\WW}(C^{(1)}_J)$, we may assume by Lemma~\ref{lemma:translation_invariance1} that $\la^{0}=\fr{\la^{0}}$.
Moreover, the above inclusions show that $(J,\la,\chi)$ satisfies the PEC for $\widehat{\WW}(A^{(1)}_J)$. In particular, $D(J)$ is bounded by Proposition~\ref{prop:necessary_A_aff}(1). Set $C=\sup_{j\in J}{|d_j|}$. 

Let $\widehat{w}\in\widehat{\WW}(C^{(1)}_J)$, which we write as $\widehat{w}=\tau_x\sigma w\inv$ for some $x=\sum_{j\in J}{n_je_j}\in Q$, some $w\in S_{(J)}$ and some $\sigma=(\sigma_j)_{j\in J}\in \{\pm 1\}^{(J)}$. Let $I\subseteq J$ be the reunion of the supports of $x$, $w$ and $\sigma$. Pick any $i_0\in J\setminus I$, and let $\sigma^{i_0}$ denote the element of $\{\pm 1\}^{(J)}$ with support $\{i_0\}$. Then one may choose
$x'\in\{0, e_{i_0}\}\subseteq Q$ and $\sigma'\in \{\mathbf{1},\sigma^{i_0}\}\subseteq \{\pm 1\}^{(J)}$
such that
$$\widehat{w}':=\tau_{x+x'}\sigma\sigma' w\inv\in \widehat{\WW}(D^{(1)}_J).$$
Moreover, (\ref{eqn:second_eqn}) yields that
\begin{equation*}
\begin{aligned}
\la(\widehat{w}.\chi-\chi)&=\frac{1}{2}\sum_{j\in I}{\Big((n_j+\la_j+\sigma_jd_{w(j)})^2-(\la_j+d_j)^2\Big)}\\
&\geq \la(\widehat{w}'.\chi-\chi)-\frac{1}{2}\Big((1+|\la_{i_0}|+|d_{i_0}|)^2+(|\la_{i_0}|+|d_{i_0}|)^2\Big)\\
&\geq \la(\widehat{w}'.\chi-\chi)-(3/2+C)^2.
\end{aligned}
\end{equation*}
Hence $(J,\la,\chi)$ satisfies the PEC for $\widehat{\WW}(C^{(1)}_J)$, as desired.
\end{proof}

\begin{prop}\label{prop:B2J}
Let $\la=(1,\la^0,0)$ and $\chi=(0,\chi^0,1)$, and set $\la_2:=(1,\tfrac{\la^0}{2},0)$ and $\chi_2:=(0,\tfrac{\chi^0}{2},1)$. Then 
$(J,\la,\chi)$ satisfies the PEC for $\widehat{\WW}(B^{(2)}_J)$ if and only if $(J,\la_2,\chi_2)$ satisfies the PEC for $\widehat{\WW}(C^{(1)}_J)$.
\end{prop}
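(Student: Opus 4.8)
The plan is to deduce the proposition directly from the identity already worked out in the proof of Lemma~\ref{lemma:minset_B2_aff}. Writing $\WW=S_{(J)}\ltimes\{\pm 1\}^{(J)}$, the semi-direct decompositions of \S\ref{subsection:Weyl_group} read $\widehat{\WW}(B^{(2)}_J)=\WW\ltimes\tau(2Q)$ and $\widehat{\WW}(C^{(1)}_J)=\WW\ltimes\tau(Q)$, since $\TTT(B^{(2)}_J)=2Q$ and $\TTT(C^{(1)}_J)=Q$. Hence the assignment $\tau_{2x}\sigma w\inv\mapsto\tau_x\sigma w\inv$, for $x\in Q$, $w\in S_{(J)}$ and $\sigma=(\sigma_j)_{j\in J}\in\{\pm 1\}^{(J)}$, is a well-defined bijection from $\widehat{\WW}(B^{(2)}_J)$ onto $\widehat{\WW}(C^{(1)}_J)$ which leaves the linear part $\sigma w\inv\in\WW$ untouched.

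First I would invoke the computation performed in the proof of Lemma~\ref{lemma:minset_B2_aff}: for every such triple $(x,w,\sigma)$ one has
$$\la(\tau_{2x}\sigma w\inv.\chi-\chi)=4\cdot\la_2(\tau_x\sigma w\inv.\chi_2-\chi_2).$$
Letting $\tau_{2x}\sigma w\inv$ run over all of $\widehat{\WW}(B^{(2)}_J)$ therefore identifies the set $\la\big(\widehat{\WW}(B^{(2)}_J).\chi-\chi\big)$ with $4$ times the set $\la_2\big(\widehat{\WW}(C^{(1)}_J).\chi_2-\chi_2\big)$. Since $4>0$, the former is bounded from below if and only if the latter is, which is exactly the asserted equivalence; alternatively, one may simply read off from the first displayed equality of Lemma~\ref{lemma:minset_B2_aff} that $\inf\la\big(\widehat{\WW}(B^{(2)}_J).\chi-\chi\big)=4\cdot\inf\la_2\big(\widehat{\WW}(C^{(1)}_J).\chi_2-\chi_2\big)$ and note that one infimum is finite precisely when the other is.

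I do not expect any genuine obstacle here: the whole content of the argument is the rescaling $x\leftrightarrow 2x$ between the translation lattices $2Q$ and $Q$, which is harmless because the linear parts of the two Weyl groups coincide and because the quadratic expression in $(\ref{eqn:second_eqn})$ transforms in the expected way when $\la^{0}$, $\chi^{0}$ and $x$ are simultaneously halved. The only point requiring minimal care is to record that $\TTT(B^{(2)}_J)=2Q$ exactly (and not merely up to finite index), so that the above map is indeed an \emph{onto} bijection of the two Weyl groups.
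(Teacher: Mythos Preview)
Your proof is correct and follows exactly the same approach as the paper, which simply writes ``This readily follows from Lemma~\ref{lemma:minset_B2_aff}.'' You have merely unpacked that sentence by making explicit the bijection $\tau_{2x}\sigma w^{-1}\leftrightarrow\tau_x\sigma w^{-1}$ between $\widehat{\WW}(B^{(2)}_J)=\WW\ltimes\tau(2Q)$ and $\widehat{\WW}(C^{(1)}_J)=\WW\ltimes\tau(Q)$ and invoking the displayed identity from the proof of that lemma.
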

\begin{proof}
This readily follows from Lemma~\ref{lemma:minset_B2_aff}.
\end{proof}

Using Lemmas~\ref{lemma:SC0}, \ref{lemma:simpleobs2} and \ref{lemma:translation_invariance1}, we can now restate the results of this section for general $\la=(\la_c,\la^{0},\la_d)$ and $\chi=(\chi_c,\chi^{0},\chi_d)$ with $\la_c\chi_d\neq 0$. Recall the definition of the cone $C_{\min}(\la,X)$ in this setting 
(Definition~\ref{def:5.10}). We denote again by $\ell^1(J)$ the set of $(\chi_c,\chi^0,\chi_d)\in\RR\times\RR^J\times\RR$ with $\chi_c=\chi_d=0$ and $\chi^0\in\ell^1(J)$. 

\begin{theorem}\label{thm:PECgen}
Let $\la=(\la_c,\la^0,\la_d)\in\RR\times\RR^J\times\RR$ with $\la_c\neq 0$ be $\ZZ$-discrete, and let $X=X_J^{(1)}$ or $X_J^{(2)}$ be one of the seven standard types from \S\ref{section:preliminaries:affine}. Then for any $\chi=(\chi_c,\chi^0,\chi_d)\in\RR\times\RR^J\times\RR$ with $\chi_d\neq 0$, the following assertions are equivalent:
\begin{enumerate}
\item
$(J,\la,\chi)$ satisfies the PEC for $\widehat{\WW}(X)$.
\item
$\la_c\chi_d>0$ and $\chi\in C_{\min}(\la,X)+\ell^1(J)$.
\end{enumerate}
\end{theorem}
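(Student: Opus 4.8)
The plan is to derive Theorem~\ref{thm:PECgen} by assembling the results of \S\ref{Section:MES} and of the present section through the normalisation and translation lemmas of \S\ref{subsection:nontrivial_case} and \S\ref{subsection:TI}. First, since $\la_c\neq0$ and $\chi_d\neq0$ we have $\la_c\chi_d\neq0$: if $(J,\la,\chi)$ satisfies the PEC for $\widehat{\WW}(X)$ then Lemma~\ref{lemma:SC0} forces $\la_c\chi_d>0$, while condition~(2) requires $\la_c\chi_d>0$ outright, so in both implications we may assume $\la_c\chi_d>0$. Then Lemma~\ref{lemma:simpleobs2} lets us replace $(\la,\chi)$ by the normalised pair $\big((1,\tfrac{\la^0}{\la_c},0),(0,\tfrac{\chi^0}{\chi_d},1)\big)$ without affecting the validity of~(1), while Definition~\ref{def:5.10} is arranged so that~(2) for $(\la,\chi)$ becomes the condition $\tfrac{\chi^0}{\chi_d}+\big[\tfrac{\la^0}{\la_c}\big]\in C_{\min}\big(\bfr{\tfrac{\la^0}{\la_c}},X\big)+\ell^1(J)$. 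Hence it suffices to prove the theorem under Convention~\ref{convention:normalisation}. For the six types $X\neq B^{(2)}_J$ I would then apply Lemma~\ref{lemma:translation_invariance1} with $\mm=[\la^0]\in\ZZ^J$ to assume moreover $\la^0=\fr{\la^0}$ (equivalently $-\tfrac12\leq\la_j<\tfrac12$); by Remark~\ref{remark:Z-discrete_alt} this keeps $\la$ $\ZZ$-discrete, now meaning $\la^0$ is finite, so that the hypotheses of Theorems~\ref{thm:PECA} and \ref{thm:PECB} are in force. In this reduced form the statement to prove is: $(J,\la,\chi)$ satisfies the PEC for $\widehat{\WW}(X)$ if and only if $\chi^0\in C_{\min}(\la^0,X)+\ell^1(J)$.

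For $X=A^{(1)}_J$ and $X=C^{(1)}_J$ this reduced statement is exactly Theorem~\ref{thm:PECA}, resp.\ Theorem~\ref{thm:PECB}, so nothing remains. For $X\in\{B^{(1)}_J,D^{(1)}_J,C^{(2)}_J,BC^{(2)}_J\}$, Proposition~\ref{prop:other_types} identifies the PEC for $\widehat{\WW}(X)$ with the PEC for $\widehat{\WW}(C^{(1)}_J)$, hence (by the $C^{(1)}_J$ case) with $\chi^0\in C_{\min}(\la^0,C^{(1)}_J)+\ell^1(J)$; so it only remains to check the identity $C_{\min}(\la^0,C^{(1)}_J)+\ell^1(J)=C_{\min}(\la^0,X)+\ell^1(J)$. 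The inclusion ``$\subseteq$'' is immediate from the chain of Weyl groups \eqref{eqn:inclusionsWhat}, which makes minimal energy for $\widehat{\WW}(C^{(1)}_J)$ the stronger requirement, so that $C_{\min}(\la^0,C^{(1)}_J)\subseteq C_{\min}(\la^0,X)$. For ``$\supseteq$'' I would take any $\psi\in C_{\min}(\la^0,X)$: then $(J,\la,(0,\psi,1))$ has infimum $0$ under $\widehat{\WW}(X)$, in particular satisfies the PEC for $\widehat{\WW}(X)$, hence (Proposition~\ref{prop:other_types}) for $\widehat{\WW}(C^{(1)}_J)$, hence (Theorem~\ref{thm:PECB}) $\psi\in C_{\min}(\la^0,C^{(1)}_J)+\ell^1(J)$; adding $\ell^1(J)$ and using $\ell^1(J)+\ell^1(J)=\ell^1(J)$ yields the claim.

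Finally, for $X=B^{(2)}_J$ one cannot translate by odd integers, since $\TTT(B^{(2)}_J)=2Q$; so instead of reducing to $\la^0=\fr{\la^0}$ I would first use Proposition~\ref{prop:B2J} to convert the PEC for $\widehat{\WW}(B^{(2)}_J)$ of $(J,(1,\la^0,0),(0,\chi^0,1))$ into the PEC for $\widehat{\WW}(C^{(1)}_J)$ of the halved triple $(J,(1,\tfrac{\la^0}{2},0),(0,\tfrac{\chi^0}{2},1))$, then translate $\tfrac{\la^0}{2}$ to its fractional part and invoke Theorem~\ref{thm:PECB}, and finally translate the resulting condition on $\chi^0$ back into membership in $C_{\min}(\cdot,B^{(2)}_J)+\ell^1(J)$ via the rescaling identity of Lemma~\ref{lemma:minset_B2_aff}. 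I expect the only genuinely delicate points to be this last step---keeping track of the additive constant and of the factor~$2$ in the $B^{(2)}_J$ case, and reconciling it with the uniform normalisation of Definition~\ref{def:5.10}---together with the verification of $C_{\min}(\la^0,C^{(1)}_J)+\ell^1(J)=C_{\min}(\la^0,X)+\ell^1(J)$; every other ingredient is a direct citation of a result already established in \S\ref{Section:MES} or in the current section.
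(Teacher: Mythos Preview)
Your proposal is correct and follows essentially the same route as the paper: normalise via Lemmas~\ref{lemma:SC0} and \ref{lemma:simpleobs2}, translate via Lemma~\ref{lemma:translation_invariance1}, then invoke Theorems~\ref{thm:PECA}/\ref{thm:PECB} for the two extreme types, Proposition~\ref{prop:other_types} for the four intermediate types, and Proposition~\ref{prop:B2J} with Lemma~\ref{lemma:minset_B2_aff} for $B_J^{(2)}$.

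The one organisational difference is in the intermediate types $X\in\{B_J^{(1)},D_J^{(1)},C_J^{(2)},BC_J^{(2)}\}$. The paper argues each implication separately: for (1)$\Rightarrow$(2) it passes through $C_{\min}(\la,C_J^{(1)})\subseteq C_{\min}(\la,X)$, and for (2)$\Rightarrow$(1) it cites Lemma~\ref{lemma:invariance_summable_aff2} directly. You instead establish the set identity $C_{\min}(\la^0,C_J^{(1)})+\ell^1(J)=C_{\min}(\la^0,X)+\ell^1(J)$ once, via the loop $\psi\in C_{\min}(\la^0,X)\Rightarrow$ PEC for $X\Rightarrow$ PEC for $C_J^{(1)}\Rightarrow\psi\in C_{\min}(\la^0,C_J^{(1)})+\ell^1(J)$. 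This is arguably cleaner, since Lemma~\ref{lemma:invariance_summable_aff2} is literally stated only for $A_J^{(1)}$ and $C_J^{(1)}$; your argument routes (2)$\Rightarrow$(1) through Proposition~\ref{prop:other_types} so that Lemma~\ref{lemma:invariance_summable_aff2} is only ever invoked for $C_J^{(1)}$, exactly where it is stated. Your anticipated bookkeeping for $B_J^{(2)}$ (tracking $[\la^0]$ versus $2[\la^0/2]$ and $\fr{\la^0}/2$ versus $\fr{\la^0/2}$ against Definition~\ref{def:5.10}) is indeed the only place requiring care, and the paper glosses over it just as you do.
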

\begin{proof}
For $X=A_J^{(1)}$, this follows from Theorem~\ref{thm:PECA}. For $X=C^{(1)}_J$, this follows from Theorem~\ref{thm:PECB}. For $X\in\{B^{(1)}_J, D^{(1)}_J, C^{(2)}_J, BC^{(2)}_J\}$, the implication (1)$\Rightarrow$(2) follows from Theorem~\ref{thm:PECB} and Proposition~\ref{prop:other_types}, together with the fact that $C_{\min}(\la,C^{(1)}_J)\subseteq C_{\min}(\la,X)$ as $\widehat{\WW}(X)\subseteq \widehat{\WW}(C^{(1)}_J)$. Conversely, (2)$\Rightarrow$(1) follows from Lemma~\ref{lemma:invariance_summable_aff2}. Similarly, for $X=B_J^{(2)}$, the implication (1)$\Rightarrow$(2) follows from Theorem~\ref{thm:PECB} and Proposition~\ref{prop:B2J}, together with Lemma~\ref{lemma:minset_B2_aff}. Conversely, (2)$\Rightarrow$(1) follows from Lemma~\ref{lemma:invariance_summable_aff2}.
\end{proof}

\noindent
{\bf Proof of Theorem~\ref{thmintro:A}.}
Since $\la$ is $\ZZ$-discrete by Lemma~\ref{lemma:integral-Zdiscrete}, this follows from Theorem~\ref{thm:PECgen}. \hspace{\fill}\qedsymbol

\section{Positive energy representations of double extensions of Hilbert loop algebras}\label{section:PERDEHLA}
We conclude this paper with a more precise statement of Corollary~\ref{corintro:arbitrary}. As announced in the introduction, Corollary~\ref{corintro:arbitrary} can be deduced from Theorem~\ref{thmintro:A} by using the main results of \cite{PECisom}. However, as noted in Remark~\ref{remark:PECisom1}, the definition of loop algebras in this paper and in the paper \cite{PECisom} slightly differ, and we now do the extra work required to pass from one convention to the other.

Let $\kk$ be a simple Hilbert--Lie algebra and $\varphi\in\Aut(\kk)$ be an automorphism of $\kk$ of finite order $N_{\varphi}$. For $N\in\NN$ we set, as in Remark~\ref{remark:PECisom1}, $$\LLL_{\varphi,N}(\kk):=\{\xi\in C^{\infty}(\RR,\kk) \ | \ \xi(t+\tfrac{2\pi}{N})=\varphi\inv(\xi(t)) \ \forall t\in\RR\}.$$
The convention in the present paper is thus to take $N=N_{\varphi}$, while the convention in \cite{PECisom} is to take $N=1$. Let $\ttt_0$ be a maximal abelian subalgebra of $\kk^{\varphi}$, and for a weight $\nu\in i\ttt_0^*$, consider as in the introduction the double extension
$$\widehat{\LLL}^{\nu}_{\varphi,N}(\kk):=(\RR\oplus_{\omega_{D_{\nu}}}\LLL_{\varphi,N}(\kk))\rtimes_{\widetilde{D}_{\nu}}\RR$$
of $\LLL_{\varphi,N}(\kk)$, with Cartan subalgebra $\ttt_0^e:=\RR\oplus\ttt_0\oplus\RR$. We respectively denote by $\widehat{\WW}_{\varphi}^{\nu}\subseteq\GL(i(\ttt_0^e)^*)$ and $\widehat{\Delta}_{\varphi}\subseteq i(\ttt_0^e)^*$ the Weyl group and root system of 
$$\g:=\widehat{\LLL}^{\nu}_{\varphi,N_{\varphi}}(\kk)$$ with respect to $\ttt_0^e$.

Let $\bc=(i,0,0)\in i\ttt_0^e$ and $\bd=(0,0,-i)\in i\ttt_0^e$. Recall from Remark~\ref{remark:PECisom1} the identification 
$$\widehat{V}_{\fin}^{(2)}\stackrel{\sim}{\to} i\ttt_0^e:(z,h,t)\mapsto z\bc+h+t\bd,$$ 
as well as its extension $\widehat{V}\stackrel{\sim}{\to}i\widehat{\ttt_0^e}$. Since we are now working in $i\widehat{\ttt_0^e}$ instead of $\widehat{V}$, we will write to avoid any confusion $\chi=[\chi_c,\chi^0,\chi_d]$ for the element $\chi_c\bc+\chi^0+\chi_d\bd=(i\chi_c,\chi^0,-i\chi_d)$ of $i\widehat{\ttt_0^e}$ and $\la=[\la_c,\la^0,\la_d]$ for the weight $\la\in i(\ttt_0^e)^*$ with $\la(\bc)=\la_c$, $\la|_{i\ttt_0}=\la^0$ and $\la(\bd)=\la_d$. 

Let $\nu\in i\ttt_0^*$. By \cite[Theorem~A]{PECisom}, one can choose the Cartan subalgebra $\ttt_0$ such that there exists a weight $\mu\in i\ttt_0^*$ and an isomorphism $\widehat{\LLL}_{\varphi,1}^{\nu}(\kk)\stackrel{\sim}{\to}\widehat{\LLL}_{\psi,1}^{\mu+\nu}(\kk)$ from $\widehat{\LLL}_{\varphi,1}^{\nu}(\kk)$ to one of the seven (slanted) standard affinisations of $\kk$ fixing the common Cartan subalgebra $\ttt_0^e$ pointwise. To distinguish between these Cartan subalgebras, we will also write $\ttt_0^e(\varphi)=\ttt_0^e$ (resp. $\ttt_0^e(\psi)=\ttt_0^e$) when $\ttt_0^e$ is viewed as a subalgebra of $\widehat{\LLL}_{\varphi,1}^{\nu}(\kk)$ (resp. $\widehat{\LLL}_{\psi,1}^{\mu+\nu}(\kk)$). 

On the other hand, by \cite[Remark~4.3]{PECisom}, there is for each $N\in\NN$ and $\phi\in\{\varphi,\psi\}$ an isomorphism 
$\widehat{\LLL}_{\phi,1}^{\thinspace \nu/N}(\kk)\stackrel{\sim}{\to} \widehat{\LLL}_{\phi,N}^{\nu}(\kk)$
whose $\CC$-linear extension to the corresponding complexifications restricts to the isomorphism 
$$i\ttt_0^e\stackrel{\sim}{\to}i\ttt_0^e:[z,h,t]\mapsto [Nz,h,t/N].$$
Here we use the same notation for the Cartan subalgebras of $\widehat{\LLL}_{\phi,1}^{\thinspace \nu/N}(\kk)$ and $\widehat{\LLL}_{\phi,N}^{\nu}(\kk)$.

Let $N_{\psi}\in\{1,2\}$ denote the order of $\psi$, and set $Q:=N_{\psi}/N_{\varphi}$.
Composing the above isomorphisms yields an isomorphism
\begin{equation}\label{eqn:isom}
\g=\widehat{\LLL}_{\varphi,N_{\varphi}}^{\nu}(\kk)\stackrel{\sim}{\to} \widehat{\LLL}_{\psi,N_{\psi}}^{\thinspace Q\nu+N_{\psi}\mu}(\kk)
\end{equation}
whose $\CC$-linear extension to the corresponding complexifications restricts to the isomorphism 
\begin{equation}\label{eqn:Cartansub}
\Phi\co i\ttt_0^e(\varphi)\stackrel{\sim}{\to}i\ttt_0^e(\psi):[z,h,t]\mapsto [Qz,h,t/Q].
\end{equation}
Note that
\begin{equation}\label{eqn:Weylgr}
\widehat{\WW}_{\varphi}^{\nu}=\Phi\inv\widehat{\WW}_{\psi}^{\thinspace Q\nu+N_{\psi}\mu}\Phi\subseteq\GL(i\ttt_0^e(\varphi)).
\end{equation}

Let now $\la_{\varphi}=[\la_c,\la^0,\la_d]\in i\ttt_0^e(\varphi)^*$ be an integral weight for $\g$ with $\la_c\neq 0$. Let also $\nu'\in i\ttt_0^e(\varphi)$, and consider as in the introduction the corresponding highest weight representation $$\widetilde{\rho}=\widetilde{\rho}_{\la_{\varphi},\chi_{\varphi}}\co \g \rtimes \RR D_{\nu'}\to\End(L(\la_{\varphi})),$$
where $$\chi_{\varphi}\co\ZZ[\widehat{\Delta}_{\varphi}]\to\RR:(\alpha,n)\mapsto n+\nu'(\alpha^{\sharp}).$$
As in Remark~\ref{remark:lachi_chila}, we view the character $\chi_{\varphi}$ as an 
element of $i\widehat{\ttt_0^e}$: by \cite[\S 7.2 Eq.~(7.7)]{PECisom} we then get
$$\chi_{\varphi}=[0,(\nu')^{\sharp}-\nu^{\sharp},1].$$
We recall that the representation $\widetilde{\rho}$ is of positive energy if and only if the set
$$\mathbf{E}:=\la_{\varphi}\big(\widehat{\WW}_{\varphi}^{\nu}.\chi_{\varphi}-\chi_{\varphi}\big)$$
is bounded from below.
By (\ref{eqn:Weylgr}), we can rewrite this set as 
$$\mathbf{E}=\la_{\psi}\big(\widehat{\WW}_{\psi}^{\mm}.\chi_{\psi}-\chi_{\psi}\big)$$
where
$$\mm:=Q\nu+N_{\psi}\mu, \quad\la_{\psi}:=\la_{\varphi}\circ\Phi\inv=[\la_c/Q,\la^{0},Q\la_d]\quad\textrm{and}\quad \chi_{\psi}:=\Phi(\chi_{\varphi})=[0,(\nu')^{\sharp}-\nu^{\sharp},1/Q].$$
Set 
$$\la:=\big[1,Q\tfrac{\la^{0}}{\la_c},0\big], \quad\textrm{and}\quad \chi:=\big[0,Q((\nu')^{\sharp}-\nu^{\sharp}),1\big].$$
In view of Lemma~\ref{lemma:simpleobs2} and \cite[Proposition~7.4]{PECisom}, we have in turn that
\begin{equation}\label{eqn:E}
\mathbf{E}=\tfrac{\la_c}{Q^2} \la\big(\widehat{\WW}_{\psi}^{\mm}.\chi-\chi\big)=\tfrac{\la_c}{Q^2} \la_{\mm}\big(\widehat{\WW}_{\psi}^{0}.\chi_{\mm}-\chi_{\mm}\big),
\end{equation}
where $\widehat{\WW}^0_{\psi}$ is the (standard) Weyl group of $\widehat{\LLL}_{\psi,N_{\psi}}^{0}(\kk)$, hence one of the $7$ Weyl groups $\widehat{\WW}(X)$ for $X=X_J^{(1)}$ or $X=X_J^{(2)}$ described in \S\ref{subsection:Weyl_group}. Note that
$$\la_{\mm}=\big[1,Q\tfrac{\la^{0}}{\la_c}-\mm,0\big]=\big[1,Q(\tfrac{\la^{0}}{\la_c}-\nu-N_{\psi}\mu),0\big]=\tfrac{Q}{\la_c}\cdot\big[\tfrac{\la_c}{Q},\la^{0}-\la_c(\nu+N_{\varphi}\mu),0\big]$$ 
and
$$\chi_{\mm}=\big[0,Q((\nu')^{\sharp}-\nu^{\sharp})+\mm,1\big]=\big[0,Q((\nu')^{\sharp}+N_{\varphi} \mu^{\sharp}),1\big]=Q\cdot\big[0,(\nu')^{\sharp}+N_{\varphi}\mu^{\sharp},1/Q\big].$$

The following theorem summarises the above discussion.
\begin{theorem}\label{thm:CorBprecise}
Let $\g=\widehat{\LLL}^{\nu}_{\varphi,N_{\varphi}}(\kk)$ be an arbitrary affinisation of a simple Hilbert--Lie algebra $\kk$. Let $\ttt_0$ be a Cartan subalgebra of $\kk^{\varphi}$ such there is some $\mu\in i\ttt_0^*$ and some standard (or trivial) automorphism $\psi\in\Aut(\kk)$ for which \eqref{eqn:isom} and \eqref{eqn:Cartansub} hold. Let $X=X_J^{(1)}$ or $X_J^{(2)}$ be the type of the root system of the standard affinisation $\widehat{\LLL}^{0}_{\psi,N_{\psi}}(\kk)$ of $\kk$ and set $Q:=N_{\psi}/N_{\varphi}$. 
Finally, let $\la_{\varphi}=[\la_c,\la^0,\la_d]\in i(\ttt_0^e)^*$ be an integral weight for $\g$ with $\la_c\neq 0$ and let $\nu'\in i(\ttt_0^e)^*$. Set $\chi_{\varphi}:=[0,(\nu')^{\sharp}-\nu^{\sharp},1]$. Then the following assertions are equivalent:
\begin{enumerate}
\item
The highest weight representation $\widetilde{\rho}_{\la_{\varphi},\chi_{\varphi}}\co \g \rtimes \RR D_{\nu'}\to\End(L(\la_{\varphi}))$ is of positive energy.
\item
The set $\mathbf{E}:=\la_{\varphi}\big(\widehat{\WW}_{\varphi}^{\nu}.\chi_{\varphi}-\chi_{\varphi}\big)$ is bounded from below.
\item
The triple $\big(J,\big[\tfrac{\la_c}{Q},\la^{0}-\la_c(\nu+N_{\varphi}\mu),0\big],\big[0,(\nu')^{\sharp}+N_{\varphi}\mu^{\sharp},1/Q\big]\big)$ satisfies the PEC for $\widehat{\WW}(X)$.
\item
$\chi_{\varphi}=\chi_{\varphi}^{\min}+\chi_{\varphi}^{\su}$ for some minimal energy character $\chi_{\varphi}^{\min}$, satisfying  $\inf\la_{\varphi}\big(\widehat{\WW}_{\varphi}^{\nu}.\chi_{\varphi}^{\min}-\chi_{\varphi}^{\min}\big)=\{0\}$, and some summable character $\chi_{\varphi}^{\su}\in\ell^1(J)$.
\end{enumerate}
Moreover, $\inf\mathbf{E}=0$ if and only if the triple $\big(J,\big[\tfrac{\la_c}{Q},\la^{0}-\la_c(\nu+N_{\varphi}\mu),0\big],\big[0,(\nu')^{\sharp}+N_{\varphi}\mu^{\sharp},1/Q\big]\big)$ is of minimal energy for $\widehat{\WW}(X)$.
\end{theorem}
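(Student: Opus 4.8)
The proof of Theorem~\ref{thm:CorBprecise} is essentially a bookkeeping argument: all the genuine work has already been done in Theorem~\ref{thm:PECgen} and in \cite{PECisom}, and what remains is to trace through the chain of isomorphisms \eqref{eqn:isom}--\eqref{eqn:Cartansub} carefully, keeping track of the reparametrisations of weights and characters. The plan is to establish the equivalences $(1)\Leftrightarrow(2)\Leftrightarrow(3)\Leftrightarrow(4)$ by assembling the computations performed in the discussion preceding the statement, and then to deduce the final ``moreover'' clause from the same computation combined with Theorem~\ref{thm:MINgen}.

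First I would record that $(1)\Leftrightarrow(2)$ is simply the definition of the positive energy condition for $\widetilde{\rho}_{\la_{\varphi},\chi_{\varphi}}$, as spelled out in \S\ref{subsection:PEC} of the introduction (the Hamiltonian $H_{\nu'}=-i\widetilde{\rho}_{\la_{\varphi},\chi_{\varphi}}(D_{\nu'})$ has spectrum $\chi_{\varphi}(\PP_{\la_{\varphi}}-\la_{\varphi})=\chi_{\varphi}(\widehat{\WW}_{\varphi}^{\nu}.\la_{\varphi}-\la_{\varphi})$, and by Remark~\ref{remark:lachi_chila} this set is bounded below iff $\mathbf{E}=\la_{\varphi}(\widehat{\WW}_{\varphi}^{\nu}.\chi_{\varphi}-\chi_{\varphi})$ is). For $(2)\Leftrightarrow(3)$, I would invoke the identity \eqref{eqn:E}, namely $\mathbf{E}=\tfrac{\la_c}{Q^2}\la_{\mm}(\widehat{\WW}_{\psi}^{0}.\chi_{\mm}-\chi_{\mm})$, together with the explicit forms of $\la_{\mm}$ and $\chi_{\mm}$ computed just before the statement. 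Since $\la_c\neq 0$, boundedness below of $\mathbf{E}$ is equivalent to boundedness below of $\la_c\cdot\la_{\mm}(\widehat{\WW}_{\psi}^{0}.\chi_{\mm}-\chi_{\mm})$; scaling the first slot by $Q/\la_c$ and the character by $1/Q$ (using that $\widehat{\WW}_{\psi}^0=\widehat{\WW}(X)$ acts linearly and $\la_c\la_{\mm}=Q[\tfrac{\la_c}{Q},\la^0-\la_c(\nu+N_\varphi\mu),0]$, $\tfrac{1}{Q}\chi_{\mm}=[0,(\nu')^\sharp+N_\varphi\mu^\sharp,1/Q]$) this is exactly the assertion that the triple $\big(J,[\tfrac{\la_c}{Q},\la^0-\la_c(\nu+N_\varphi\mu),0],[0,(\nu')^\sharp+N_\varphi\mu^\sharp,1/Q]\big)$ satisfies the PEC for $\widehat{\WW}(X)$; here one should note the first component $\tfrac{\la_c}{Q}\neq 0$ and that $\la$ being integral forces $\ZZ$-discreteness via Lemma~\ref{lemma:integral-Zdiscrete}, so Theorem~\ref{thm:PECgen} applies. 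Then $(3)\Leftrightarrow(4)$ follows from Theorem~\ref{thm:PECgen}(2) transported back through the same isomorphisms: the summable part $\ell^1(J)$ is preserved under the scaling $\Phi$ and under adding $\mm\in\ZZ^J$ (Lemma~\ref{lemma:translation_invariance1} and Lemma~\ref{lemma:invariance_summable_aff2}), and one unwinds the decomposition $\chi=\chi_{\min}+\chi_{\su}$ on the $\psi$-side to a decomposition $\chi_{\varphi}=\chi_\varphi^{\min}+\chi_\varphi^{\su}$ on the $\varphi$-side using $\Phi^{-1}$.

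For the final ``moreover'' statement, I would argue as follows. From \eqref{eqn:E} we have $\inf\mathbf{E}=\tfrac{\la_c}{Q^2}\inf\la_{\mm}(\widehat{\WW}_{\psi}^{0}.\chi_{\mm}-\chi_{\mm})$, and since $\la_c/Q^2>0$ (note $\la_c\chi_d>0$ is forced in the relevant case, or else the degenerate analysis of \S\ref{section:trivial_cases} applies and $\mathbf{E}$ is not bounded below unless things collapse), we get $\inf\mathbf{E}=0$ iff $\inf\la_{\mm}(\widehat{\WW}_{\psi}^{0}.\chi_{\mm}-\chi_{\mm})=0$. Rescaling as above, this is iff $\inf\widetilde{\la}(\widehat{\WW}(X).\widetilde{\chi}-\widetilde{\chi})=0$ where $\widetilde{\la}=[\tfrac{\la_c}{Q},\la^0-\la_c(\nu+N_\varphi\mu),0]$ and $\widetilde{\chi}=[0,(\nu')^\sharp+N_\varphi\mu^\sharp,1/Q]$, which by Theorem~\ref{thm:MINgen} is precisely the statement that this triple is of minimal energy for $\widehat{\WW}(X)$.

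The only place where care is genuinely needed — and which I expect to be the main obstacle — is the verification that the positivity of the scaling factor $\la_c/Q^2$ is legitimate and that one is not silently in a degenerate regime. More precisely, one must check that in the setting of the theorem the degenerate cases $\la_c\chi_d=0$ cannot occur in a way that invalidates the reduction: here $\la_c\neq 0$ by hypothesis and the relevant $\chi_d$ on the standard side equals $1/Q\neq 0$ after the rescaling in \eqref{eqn:Cartansub}, so Lemma~\ref{lemma:SC0} and Lemma~\ref{lemma:simpleobs2} apply and force the sign condition $\tfrac{\la_c}{Q}\cdot\tfrac{1}{Q}>0$ whenever $\mathbf{E}$ is bounded below; since $Q>0$ this is automatic from $\la_c\neq 0$ only if $\la_c>0$, so one must be slightly more careful and either allow $\la_c<0$ by also allowing the inf to fail unless one passes to $\chi_{\mathrm{st}}$ via Lemma~\ref{lemma:simpleobs2}, or simply observe that all four conditions $(1)$--$(4)$ are symmetric under simultaneously negating $\la_c$ and the sign conventions. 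Properly stated, the scaling factor in \eqref{eqn:E} is $\la_c/Q^2$ and its sign matches the sign needed for \eqref{eqn:Weylgr} to preserve infima up to that positive-or-negative multiple, so the equivalences of $(1)$--$(4)$ hold regardless of the sign of $\la_c$, while the ``$\inf\mathbf{E}=0$'' clause is where one genuinely uses $\la_c/Q^2$ multiplying a non-negative quantity — which is fine because the PEC (when it holds) forces that quantity to have $\inf$ of the correct sign. I would handle this by citing Lemma~\ref{lemma:SC0} and Lemma~\ref{lemma:simpleobs2} at the appropriate point rather than belabouring it.
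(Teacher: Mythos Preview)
Your overall strategy is the paper's strategy: reduce everything to the ``standard'' side via \eqref{eqn:E} and then invoke Theorem~\ref{thm:PECgen}. Two points deserve comment.

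First, your lengthy discussion of the sign of $\la_c/Q^2$ is unnecessary. If you carry out the rescalings you describe, the factors cancel exactly: writing $\widetilde{\la}:=[\tfrac{\la_c}{Q},\la^0-\la_c(\nu+N_\varphi\mu),0]$ and $\widetilde{\chi}:=[0,(\nu')^\sharp+N_\varphi\mu^\sharp,1/Q]$, one has $\la_{\mm}=\tfrac{Q}{\la_c}\widetilde{\la}$ and $\chi_{\mm}=Q\widetilde{\chi}$, so \eqref{eqn:E} gives
\[
\mathbf{E}=\tfrac{\la_c}{Q^2}\cdot\tfrac{Q}{\la_c}\cdot Q\cdot\widetilde{\la}\big(\widehat{\WW}(X).\widetilde{\chi}-\widetilde{\chi}\big)=\widetilde{\la}\big(\widehat{\WW}(X).\widetilde{\chi}-\widetilde{\chi}\big).
\]
Hence $(2)\Leftrightarrow(3)$ and the ``moreover'' clause are literal equalities of sets, with no sign issue to manage.

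Second, and this is a genuine gap: you cannot simply cite Lemma~\ref{lemma:integral-Zdiscrete} to obtain $\ZZ$-discreteness of $\widetilde{\la}$. That lemma assumes integrality with respect to the \emph{standard} root system $\widehat{\Delta}$, whereas what you are given is that $\la_{\varphi}$ is integral with respect to the \emph{twisted} root system $\widehat{\Delta}_{\varphi}$, whose coroots involve the slant $\nu$. The paper therefore reproves the lemma in this context: using the coroot formula $(0,\epsilon_i-\epsilon_j,n)^{\vee}=(-n-(\nu_i-\nu_j),e_i-e_j,0)$ from \cite[\S 3.4]{PECisom}, integrality of $\la_{\varphi}$ gives $-(n+\nu_i-\nu_j)\la_c+\la_i-\la_j\in\ZZ$ for infinitely many $n$, whence $\la_c\in\QQ$ and the set $\{\tfrac{\la_j}{\la_c}-\nu_j+\ZZ\}$ is finite. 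One then needs the additional fact, specific to the isomorphism of \cite{PECisom}, that the $\mu_j$ take only finitely many rational values (of the form $-n_j/N$ with $N\in\{N_\varphi,2N_\varphi\}$), to conclude that $\{Q(\tfrac{\la_j}{\la_c}-\nu_j-N_\varphi\mu_j)+\ZZ\}$ is finite. This verification is the bulk of the paper's proof and is missing from yours.
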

\begin{proof}
The equivalence of (1), (2) and (3), as well as the last statement of the theorem readily follow from the above discussion.
The equivalence of (3) and (4) follows from Theorem~\ref{thm:PECgen} and (\ref{eqn:E}). Note that $[\tfrac{\la_c}{Q},\la^{0}-\la_c(\nu+N_{\varphi}\mu),0]$ is indeed $\ZZ$-discrete: this can be seen as in the proof of Lemma~\ref{lemma:integral-Zdiscrete}. More precisely, set $\la^0=(\la_j)_{j\in J}$, $\nu=(\nu_j)_{j\in J}$ and $\mu=(\mu_j)_{j\in J}$. The integrality condition on $\la_{\varphi}$ implies that 
$$\la_{\varphi}((0,\epsilon_i-\epsilon_j,n)^{\vee})=\la_{\varphi}((-n-(\nu_i-\nu_j),e_i-e_j,0))=-(n+\nu_i-\nu_j)\la_c+\la_i-\la_j\in\ZZ$$
for infinitely many values of $n\in\ZZ$ (see \cite[\S 3.4]{PECisom}). Hence $\la_c$ is rational, say $\la_c=m/p$ for some nonzero integers $m,p$. Then
$$\big(\tfrac{\la_i}{\la_c}-\nu_i\big)-\big(\tfrac{\la_j}{\la_c}-\nu_j\big)\in\ZZ+\tfrac{1}{m}\ZZ$$
for all $i,j\in J$, and hence
$$\big\{\tfrac{\la_j}{\la_c}-\nu_j+\ZZ \ | \ j\in J\big\}$$
is finite. This implies in turn that
$$\big\{Q(\tfrac{\la_j}{\la_c}-\nu_j-N_{\varphi}\mu_j)+\ZZ \ | \ j\in J\big\}$$
is finite because $Q$ is rational and $\{\mu_j \ | \ j\in J\}$ is a finite subset of $\QQ$ (in fact, $\mu_j$ is of the form $\mu_j=-n_j/N$ for some $N\in\{N_{\varphi},2N_{\varphi}\}$ and some $n_j\in\{0,1,\dots,N-1\}$, see \cite[Section~6]{PECisom}). This yields the claim. 
\end{proof}

\bibliographystyle{amsalpha} 
\bibliography{these}

\end{document}